\newtheorem{theorem}{Theorem}[section]
\newtheorem*{theorem*}{Theorem}
\newtheorem{corollary}[theorem]{Corollary}
\newtheorem*{corollary*}{Corollary}
\newtheorem{proposition}[theorem]{Proposition}
\newtheorem*{proposition*}{Proposition}
\newtheorem{lemma}[theorem]{Lemma}
\newtheorem{conjecture}[theorem]{Conjecture}
\theoremstyle{definition}
\newtheorem{definition}[theorem]{Definition}
\theoremstyle{definition}
\newtheorem{example}[theorem]{Example}
\newtheorem{remark}[theorem]{Remark}
\numberwithin{equation}{section}
\newcommand{\C}{{\mathbb C}}
\newcommand{\Z}{{\mathbb Z}}
\newcommand{\N}{{\mathbb N}}
\newcommand{\A}{{\mathscr A}}
\newcommand{\g}{{\mathfrak{g}}}
\newcommand{\radu}{{\mathfrak{r}}}
\newcommand{\radg}{{\mathfrak{rad}\,\g}}
\newcommand{\lfrak}{{\mathfrak{l}}}
\newcommand{\bfrak}{{\mathfrak{b}}}
\newcommand{\tfrak}{{\mathfrak{t}}}
\newcommand{\z}{{\mathfrak{z}}}
\newcommand{\h}{{\mathfrak{h}}}
\newcommand{\fl}{{\mathfrak{l}}}
\newcommand{\gl}{{\mathfrak{gl}}}
\newcommand{\ad}{{\mathrm{ad}}}
\newcommand{\GL}{{\mathrm{GL}}}
\newcommand{\SL}{{\mathrm{SL}}}
\newcommand{\Hom}{{\mathrm{Hom}}}
\newcommand{\Ocnp}{{\mathscr{O}_{\C^n,p}}}
\newcommand{\Der}{{\mathrm{Der}}}
\newcommand{\Derlog}[2]{{\mathrm{Der}}_{#1}(-\log #2)}
\newcommand{\codim}{{\mathrm{codim}}}
\newcommand{\rank}{{\mathrm{rank}}}
\newcommand{\smooth}{{\mathrm{smooth}}}
\newcommand{\Rep}{\textrm{Rep}}
\newcommand{\Rad}{\mathrm{Rad}}
\newcommand{\gpcenter}{\mathrm{Z}}
\newcommand{\nsub}{\mathrel{\unlhd}}
\newcommand{\associatedmult}{\stackrel{\mathrm{m}}{\longleftrightarrow}}
\newcommand{\associatedadd}{\stackrel{\mathrm{a}}{\longleftrightarrow}}
\newcommand{\Gm}{{\mathbb{G}_\mathrm{m}}}
\newcommand{\Ga}{{\mathbb{G}_\mathrm{a}}}
\title[Additive relative invariants and linear free divisors]{Additive relative invariants and the components of a linear free divisor}
\date{\today}
\author{Brian Pike}
\address{Dept.\ of Computer and Math\-ematical Sciences,
University  of Tor\-onto Scarborough, 
1265 Military Trail, 
Toronto, ON M1C 1A4,
Canada}
\email{bapike@gmail.com}
\subjclass[2010]{11S90 (Primary); 20G20, 32S25, 17B66 (Secondary)}
\keywords{prehomogeneous vector space, relative invariant,
additive relative invariant,
linear algebraic group, linear free divisor, free divisor, homotopy
group}
\begin{document}
\begin{abstract}
A \emph{prehomogeneous vector space} is a
rational
representation
$\rho:G\to\GL(V)$ of a connected complex linear algebraic group $G$
that has a Zariski open orbit $\Omega\subset V$. 
Mikio Sato showed that 
the hypersurface components of $D:=V\setminus \Omega$
are related to the rational characters $H\to\GL(\C)$ of $H$,
an algebraic abelian quotient of $G$. 
Mimicking this work,
we investigate the \emph{additive functions} of
$H$, the homomorphisms $\Phi:H\to (\C,+)$.
Each such $\Phi$ is related to an
\emph{additive relative invariant},
a rational function $h$ on $V$ such that 
$h\circ \rho(g)-h=\Phi(g)$ on $\Omega$ for all $g\in G$.
Such an $h$ is homogeneous of degree $0$, and helps describe the behavior
of certain subsets of $D$ under the $G$--action.

For those prehomogeneous vector spaces with 
$D$ a type of hypersurface called a linear free divisor,
we prove there are no nontrivial additive functions of $H$, and hence
$H$ is an algebraic torus.
From this we gain insight into the structure of such representations and prove
that the number of irreducible components of $D$ equals the dimension
of the abelianization of $G$.
For some special cases
($G$ abelian, reductive, or solvable, or $D$ irreducible) 
we simplify proofs of existing results.
We also examine the homotopy groups of $V\setminus D$.
\end{abstract}
\maketitle
\tableofcontents

%
%
%
%
%
%
%
%
%
%
%
%
\section*{Introduction}
\label{sec:intro}
A \emph{prehomogeneous vector space} is a rational representation
$\rho:G\to\GL(V)$ of a connected complex linear algebraic group $G$
that has a (unique) Zariski open orbit $\Omega$ in $V$.
These representations have been much studied from the viewpoint of
number theory and harmonic analysis (e.g., \cite{kimura}).
A particularly useful tool is to consider
the rational
\emph{characters} $\chi:H\to \Gm:=\GL(\C)$
of the abelian quotient
$H:=G/[G,G]\cdot G_{v_0}$, where $G_{v_0}$ is the isotropy subgroup at
any $v_0\in\Omega$.
There is a correspondence between these characters and
`relative invariants';
a \emph{relative invariant} $h$ is a rational function on $V$ such
that on $\Omega$, $h$ is holomorphic and $h\circ\rho$ is merely $h$
multiplied by a character of $G$ (see \eqref{eqn:relinv}).
The relative invariants are constructed from the
irreducible polynomials defining the 
hypersurface components of the algebraic set $V\setminus \Omega$.
Thus, for example,
the number of hypersurface components of $V\setminus
\Omega$ equals the rank of the free abelian group of characters of $H$.

In this paper we study the \emph{additive functions} of $H$,
the rational homomorphisms $\Phi:H\to \Ga:=(\C,+)$.
This complements the classical study of the characters, as
up to isomorphism
$\Gm$ and $\Ga$ are the only
$1$-dimensional
connected complex linear algebraic groups.
Since 
$H$ is connected and abelian,
$H\cong\Gm^k\times \Ga^\ell$ with
$k$ the rank of the character group of $H$ and
$\ell$ the dimension of the vector space of 
additive functions of $H$;
the characters and additive functions thus describe $H$ completely.
Of particular interest for us,  
the number of irreducible hypersurface components of
$V\setminus \Omega$ equals
\begin{equation}
\label{eqn:kdimhell}
k=\dim(H)-\ell
\end{equation}
(see Corollary \ref{cor:numcomponents}),
where $\dim(H)$ is easily computable.

The content of the paper is as follows.
After reviewing the basic properties of prehomogeneous vector spaces
in \S\ref{sec:intropvs} 
and abelian linear algebraic groups in 
\S\ref{sec:abelian}, we study the additive functions of $H$ in
\S\ref{sec:addrelinv}.
Just as the 
characters of $H$ are related to relative invariants,
we show that
the additive functions of $H$ are related to `additive relative
invariants'
(see Definition \ref{defn:addrelinv}).
By Proposition \ref{prop:whichrationalfunctions},
a rational function $k$ on $V$
is an \emph{additive relative invariant} if and only if
$k(\rho(g)(v))-k(v)$ is independent of $v\in \Omega$ for all $g\in G$,
and
then this difference gives the additive function $G\to\Ga$ associated to
$k$.  
These additive relative invariants are homogeneous of degree
$0$ and of the form $\frac{h}{f}$ for a polynomial relative invariant $f$
(Proposition \ref{prop:homogeneous}).
Geometrically, an additive relative invariant $\frac{h}{f}$ describes a
$G$--invariant subset $f=h=0$ of $f=0$, with $G$ permuting
the sets 
$$V_{\epsilon}=\{x: f(x)=h(x)-\epsilon=0\},\qquad \epsilon\in\C,$$
by $\rho(g)(V_{\epsilon})=V_{\chi(g)\cdot \epsilon}$ for 
$\chi$ the character corresponding to the relative invariant
$f$
(see Proposition \ref{prop:funsagreeon});
the converse also holds if $\deg(\frac{h}{f})=0$ and $f$ is reduced.
In \S\ref{subsec:vanishing} we investigate which
characters and additive functions
vanish on which isotropy subgroups,
a key ingredient for \S\ref{sec:lfds}.
In \S\ref{subsec:algind} we establish the algebraic independence of a
generating set of relative invariants and the numerators of a basis of
the additive relative invariants.
Finally, in $\S\ref{subsec:decomp}$ we conjecture 
that
every additive relative invariant may be written as a sum of
additive relative invariants of 
the form $\frac{h_i}{f_i}$, where $f_i$ is a power of an irreducible
polynomial relative invariant.
We prove this splitting behavior
when the additive relative invariant may be written as a sum of
appropriate fractions.

In \S\ref{sec:exs} we describe some examples of prehomogeneous
vector spaces and their additive relative invariants.

In \S\ref{sec:lfds}, we study prehomogeneous vector spaces
with the property that
$D:=V\setminus \Omega$ is a type of hypersurface called a
\emph{linear free divisor}.
Such objects have been of much interest recently
(e.g., \cite{mondbuchweitz,gmns,freedivisorsinpvs,kpi1,DP-matrixsingI}),
and were our original motivation.
Using a criterion due to Brion and
the results of \S\ref{subsec:vanishing},
we show in Theorem \ref{thm:homomorphism}
that such prehomogeneous vector spaces have
no nontrivial additive relative invariants or nontrivial additive
functions.
Then
by \eqref{eqn:kdimhell}
the number of irreducible components of $D$
equals $\dim(H)$, and this may be easily computed 
using only the Lie algebra $\g$ of $G$
(Theorem \ref{thm:ofhomomorphism}; see also Remark
\ref{rem:worksmoregenerally}).
We also gain insight into the structure of $G$ and
the behavior of the $G$--action
(\S\S\ref{subsec:mainthm}, \ref{subsec:structureg},
\ref{subsec:liealgebra}).
In \S\ref{subsec:specialcases}, we study the special cases
where $G$ is abelian, reductive, or solvable, or where $D$ is
irreducible,
simplifying
proofs of several known results.
In \S\ref{subsec:whatelse}
we observe that, although the number of components of a linear free
divisor is computable from the abstract Lie algebra structure
$\g$ of $G$, the degrees of the polynomials defining these components are
not,
and seem to 
require the representation $\g\to\gl(V)$.
Lemma \ref{lemma:degrees} can compute these 
degrees, but requires data that are themselves difficult to
compute.
Finally, in \S\ref{subsec:homotopy} we use our earlier results to
study the homotopy groups of 
the complement of a linear free divisor.

\textbf{Acknowledgements:}
We would like to thank Mathias Schulze for very helpful
discussions that led to this work.
He deserves much of the credit for
Corollary \ref{cor:solvable}.
We also thank Christian Barz for finding an error in an example.

\section{Prehomogeneous vector spaces}
\label{sec:intropvs}
We begin by briefly reviewing 
prehomogeneous vector spaces.
Our reference is 
\cite{kimura}, although the results we describe
were developed by 
M.~Sato in the 1960's (\cite{sato}).

In the whole article, we shall study only complex linear algebraic
groups.
For such a $K$,
let $K^0$ denote the connected component of $K$
containing the identity,
$L(K)$ the Lie algebra of $K$,
and $K_v$ the isotropy subgroup at $v$ of a particular $K$--action. 

Let $V$ be a finite-dimensional complex vector space, and $G$ a
connected complex linear algebraic group.  Let $\rho:G\to\GL(V)$ be a
rational representation of $G$, i.e., a homomorphism of linear algebraic
groups.
When $G$ has an open orbit $\Omega$ in $V$, then we call $(G,\rho,V)$
a \emph{prehomogeneous vector space}.  Then $\Omega$ is
unique and Zariski open, so that the complement $\Omega^c=V\setminus \Omega$ is an
algebraic set in $V$.  We call $\Omega^c$ the \emph{exceptional orbit
variety} as it is the union of the non-open orbits of $G$; 
others use \emph{discriminant} or \emph{singular
set}.

One of the basic theorems of prehomogeneous vector spaces is that the
hypersurface components of $\Omega^c$ may be detected from certain
multiplicative characters of $G$.
More precisely,
for any complex linear
algebraic group $K$ let $X(K)$ denote the set of rational
(multiplicative) \emph{characters}, that is, the
homomorphisms $K\to \Gm:=\GL(\C)$ of linear algebraic groups.
Then a rational function $f$ on $V$ that is not identically $0$ is a
\emph{relative invariant}
if there exists a $\chi\in X(G)$ such that
\begin{equation}
\label{eqn:relinv}
f(\rho(g)(v))=\chi(g)\cdot f(v)
\end{equation}
for all $v\in \Omega$ and $g\in G$, in which case
we\footnote{Usually this is written 
$f\longleftrightarrow \chi$, but we shall use similar notation for
the relationship between
additive functions and additive relative invariants.}
write
$f\associatedmult \chi$.
By \eqref{eqn:relinv}, the zeros and poles of
$f$ may occur only on $\Omega^c$.

Actually, $f$ and $\chi$ provide almost the same information
when $f\associatedmult \chi$.
Using $f$, we may
choose any $v_0\in \Omega$ and then recover $\chi$ by defining 
$$\chi(g)=\frac{f(\rho(g)(v_0))}{f(v_0)}\in(\C^*,\cdot)\cong\Gm,\qquad g\in G.$$
Conversely, using $\chi$ we may recover a nonzero constant multiple
$h$ of $f$ by choosing any $v_0\in\Omega$ and any nonzero value
$h(v_0)\in\C$,
defining $h$ on $\Omega$ by $h(\rho(g)(v_0))=\chi(g) h(v_0)$,
and then using the density of $\Omega$ to extend $h$ uniquely to 
a rational function on $V$.

Let
$X_1(G)$ be the set of $\chi\in X(G)$ for which there exists an $f$
with $f\associatedmult\chi$.
Then $X_1(G)$ is an abelian group: if
$f_i\associatedmult \chi_i$ and $n_i\in \Z$ for $i=1,2$, then
$(f_1)^{n_1}(f_2)^{n_2}\associatedmult (\chi_1)^{n_1}(\chi_2)^{n_2}\in
X_1(G)$.
The group $X_1(G)$ contains information 
about the hypersurface
components of $\Omega^c$.
\begin{theorem}[e.g., {\cite[Theorem 2.9]{kimura}}] 
\label{thm:pvs1}
Let $(G,\rho,V)$ be a prehomogeneous vector space with exceptional
orbit variety $\Omega^c$, and $S_i=\{x\in V: f_i(x)=0\}$,
$i=1,\ldots,r$ the distinct irreducible hypersurface components of
$\Omega^c$.  Then the irreducible polynomials $f_1,\ldots,f_r$ are
relative invariants which are algebraically independent, and any
relative invariant is of the form $c\cdot (f_1)^{m_1}\cdots (f_r)^{m_r}$ for
nonzero
$c\in\C$, and $m_i\in\Z$.  Moreover, if each $f_i\associatedmult \chi_i$,
then $X_1(G)$ is a free abelian group of rank $r$ generated by
$\chi_1,\ldots,\chi_r$.
\end{theorem}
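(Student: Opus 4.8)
The plan is to proceed in four steps, exploiting the connectedness of $G$ together with unique factorization in the polynomial ring $\C[V]$. First I would show each $f_i$ is a relative invariant. Since $\rho(g)$ is a linear automorphism of $V$ for each $g\in G$, it preserves $\Omega$ and hence $\Omega^c$, permuting the irreducible components $S_1,\dots,S_r$ among themselves (a homeomorphism preserves codimension, so hypersurface components go to hypersurface components). The resulting homomorphism from the connected group $G$ to the finite symmetric group on $\{S_1,\dots,S_r\}$ is trivial, so each $S_i$ is $G$--invariant. Consequently $f_i\circ\rho(g)$ vanishes on $S_i=\{f_i=0\}$; as $f_i$ is irreducible and $\rho(g)$ is invertible linear (so $\deg(f_i\circ\rho(g))=\deg f_i$), the Nullstellensatz forces $f_i\circ\rho(g)=\chi_i(g)\,f_i$ for a nonzero scalar $\chi_i(g)$. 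Setting $\chi_i(g)=f_i(\rho(g)(v_0))/f_i(v_0)$ for a fixed $v_0\in\Omega$ exhibits $\chi_i$ as a rational character of $G$, so $f_i\associatedmult\chi_i$.

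Next I would prove that every relative invariant is $c\,(f_1)^{m_1}\cdots(f_r)^{m_r}$. If $f\associatedmult\chi$, then (as noted after \eqref{eqn:relinv}) the zeros and poles of $f$ lie in $\Omega^c$; since a principal divisor is purely of codimension one, $\mathrm{div}(f)=\sum_i m_i S_i$ for some $m_i\in\Z$. Then $f\cdot\prod_i (f_i)^{-m_i}$ is a rational function on $V\cong\C^n$ with empty divisor, hence a nonzero constant $c$, giving $f=c\prod_i (f_i)^{m_i}$ and, reading off characters, $\chi=\prod_i(\chi_i)^{m_i}$. In particular $\chi_1,\dots,\chi_r$ generate $X_1(G)$.

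It remains to establish freeness and algebraic independence, and here unique factorization does the work. If $\prod_i(\chi_i)^{m_i}$ is trivial, then $\prod_i(f_i)^{m_i}$ corresponds to the trivial character and so is $G$--invariant; being constant on the dense orbit $\Omega$, it is globally constant, and after clearing negative exponents and comparing irreducible factorizations of distinct irreducibles, every $m_i=0$. Thus $X_1(G)\cong\Z^r$. For algebraic independence, I would evaluate a putative polynomial relation $\sum_\alpha c_\alpha (f_1)^{\alpha_1}\cdots(f_r)^{\alpha_r}=0$ along the orbit $g\mapsto\rho(g)(v_0)$, turning it into $\sum_\alpha d_\alpha\,\chi^\alpha(g)=0$ on $G$, where $d_\alpha=c_\alpha\prod_i f_i(v_0)^{\alpha_i}$ and $\chi^\alpha=\prod_i(\chi_i)^{\alpha_i}$. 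The multiplicative independence just proved makes $\alpha\mapsto\chi^\alpha$ injective, so the $\chi^\alpha$ appearing are distinct characters; by the linear independence of distinct characters each $d_\alpha=0$, whence each $c_\alpha=0$.

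I expect the algebraic independence to be the delicate point, since it is exactly where the two independence statements intertwine: one needs the multiplicative independence of the $\chi_i$ (from unique factorization) to guarantee that distinct monomials in the $f_i$ produce distinct characters before the linear independence of characters can be applied. The remaining ingredients---the connectedness argument for the invariance of each $S_i$, and the divisor computation---are routine once the fact that zeros and poles of a relative invariant occur only on $\Omega^c$ is in hand.
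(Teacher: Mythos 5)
Your argument is correct, and since the paper states this theorem without proof (citing Kimura's Theorem 2.9), the relevant comparison is with that standard proof — which yours essentially reproduces: connectedness of $G$ to make each $S_i$ invariant and extract the characters $\chi_i$, unique factorization in $\C[V]$ for the monomial form and freeness, and the linear independence of distinct characters for the algebraic independence. No gaps worth flagging.
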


The (homogeneous) irreducible polynomials $f_1,\ldots,f_r$ of the Theorem are called the \emph{basic
relative invariants} of $(G,\rho,V)$.
\begin{remark}
\label{rem:omegac}
Since $v\in\Omega$ if and only if the corresponding orbit map $g\mapsto
\rho(g)(v)$ is a submersion at $e\in G$,
and this derivative depends linearly on $v\in V$,
the set $\Omega^c$ may be defined by an ideal generated by
homogeneous polynomials of degree $\dim(V)$
(see \S\ref{subsec:lfdsintro}). 
\end{remark}

There exists another description of $X_1(G)$.  Fix an element $v_0\in
\Omega$ with isotropy subgroup $G_{v_0}$.
Since the orbit of
$v_0$ is open, we have $\dim(G_{v_0})=\dim(G)-\dim(V)$.
Define the algebraic groups
$$
G_1=[G,G]\cdot G_{v_0}\qquad\textrm{and}\qquad
H=G/G_1.$$
The group $G_1$ does not depend on the choice of $v_0\in\Omega$ as all
such
isotropy subgroups are conjugate,
and $[G,G]\subseteq G_1$.
By \eqref{eqn:relinv}, every $\chi\in X_1(G)$ has
$G_1\subseteq \ker(\chi)$, and thus factors through the quotient to
give a corresponding
$\widetilde{\chi}\in
X(H)$.
In fact, the map $\chi\mapsto \widetilde{\chi}$ is an isomorphism:
\begin{proposition}[e.g., {\cite[Proposition 2.12]{kimura}}]
\label{prop:x1is}
Let $(G,\rho,V)$ be a prehomogeneous vector space with open orbit
$\Omega$.  Then for any $v_0\in \Omega$,
$$X_1(G)\cong X(G/[G,G]\cdot G_{v_0}).$$
\end{proposition}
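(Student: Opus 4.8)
The plan is to write down the natural map $\chi\mapsto\widetilde{\chi}$ already indicated before the statement, check that it is a well-defined injective group homomorphism $X_1(G)\to X(H)$ with almost no work, and then spend all of the effort on surjectivity. For the setup: since $\Gm$ is abelian, $[G,G]\subseteq\ker(\chi)$ automatically for any $\chi\in X(G)$, and if $f\associatedmult\chi$ then for $g\in G_{v_0}$ we have $\chi(g)f(v_0)=f(\rho(g)(v_0))=f(v_0)$ with $f(v_0)\neq 0$ (as $v_0\in\Omega$ and the zeros of $f$ lie in $\Omega^c$), forcing $G_{v_0}\subseteq\ker(\chi)$. Hence $G_1=[G,G]\cdot G_{v_0}\subseteq\ker(\chi)$, so $\chi$ descends to $\widetilde{\chi}\in X(H)$. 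That this map is a homomorphism is immediate from the definition of the product on $X_1(G)$, and it is injective because $\chi=\widetilde{\chi}\circ\pi$ for the quotient map $\pi\colon G\to H$, so $\widetilde{\chi}=1$ forces $\chi=1$.

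The real content is surjectivity. Given $\widetilde{\chi}\in X(H)$, I would set $\chi=\widetilde{\chi}\circ\pi\in X(G)$; by construction $\chi$ is a regular, nowhere-vanishing function on $G$ that is trivial on $G_{v_0}$. I then recover a relative invariant by the recipe sketched before the statement: fix $v_0\in\Omega$, choose $h(v_0)\in\C^*$, and define $h(\rho(g)(v_0))=\chi(g)\,h(v_0)$. Triviality of $\chi$ on $G_{v_0}$ is exactly what makes this well defined on $\Omega$, since $\rho(g_1)(v_0)=\rho(g_2)(v_0)$ means $g_2^{-1}g_1\in G_{v_0}\subseteq\ker(\chi)$. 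Once $h$ is known to be rational on $V$, the computation $h(\rho(g)\rho(g')(v_0))=\chi(gg')h(v_0)=\chi(g)\,h(\rho(g')(v_0))$, together with the density of $\Omega$, yields $h\associatedmult\chi$; thus $\chi\in X_1(G)$ and $\chi\mapsto\widetilde{\chi}$ returns the given character. Moreover $h$ is nonvanishing on $\Omega$, so its zeros and poles lie in $\Omega^c$ and it is a genuine nonzero relative invariant.

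The main obstacle is precisely the rationality of the function $h$ produced above; everything else is formal. I would obtain it from the homogeneous-space structure of the open orbit: in characteristic $0$ the orbit map induces an isomorphism of varieties $G/G_{v_0}\cong\Omega$, and $\chi\colon G\to\C^*$, being invariant under right translation by $G_{v_0}$, descends to a regular function on $G/G_{v_0}$ and hence to a regular function on $\Omega$. A regular function on the dense Zariski-open set $\Omega$ of the irreducible affine variety $V$ lies in the function field $\C(V)$, so $h$ is rational on $V$, as required. This identification of $\Omega$ with $G/G_{v_0}$ and the descent of $\chi$ along it is the one genuinely geometric input, and it is where I expect the subtlety to concentrate.
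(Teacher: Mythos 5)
Your proposal is correct and follows essentially the route the paper takes: the paper cites Kimura for this proposition but sketches exactly your construction in \S\ref{sec:intropvs} (descending $\chi$ through $G_1=[G,G]\cdot G_{v_0}$, and recovering $h$ on $\Omega$ via $h(\rho(g)(v_0))=\chi(g)h(v_0)$), and the rationality step you isolate is precisely the content of the argument of \cite[Proposition 2.11]{kimura} that the paper invokes, characteristic-$0$ caveat included, in the proof of the additive analogue (Proposition \ref{prop:a1}).
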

Consequently, the rank of the character group $X(H)$
equals 
the number of irreducible hypersurface components of
$V\setminus \Omega$.

\section{Abelian complex linear algebraic groups}
\label{sec:abelian}
Let $(G,\rho,V)$ be a prehomogeneous vector space, and use the
notation of the previous section
with $H=G/[G,G]\cdot G_{v_0}$ for some $v_0\in\Omega$.
To understand the
rank of $X(H)$ and thus the 
number of
irreducible hypersurface components of the exceptional orbit variety
$\Omega^c$, we must understand $H$.  Since $H$ is
abelian and connected, its structure is very simple.
Recall that over $\C$ there are exactly two distinct
$1$-dimensional connected linear algebraic
groups, $\Gm=\GL(\C^1)\cong(\C^*,\cdot)$ and 
$\Ga=(\C,+)$.

\begin{proposition}[e.g., {\cite[\S3.2.5]{ov}}]
\label{prop:ckcl}
An abelian connected complex linear algebraic group $K$ is isomorphic to
$(\Gm)^k\times (\Ga)^\ell$ for nonnegative integers $k$ and $\ell$,
where the exponents denote a repeated direct product.
\end{proposition}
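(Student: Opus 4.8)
The plan is to decompose $K$ by means of the multiplicative Jordan decomposition of its elements and to identify the resulting factors. First I would fix a faithful rational representation $K\hookrightarrow\GL(V)$ realizing $K$ as a closed subgroup of some $\GL(V)$, so that every $g\in K$ has a Jordan decomposition $g=g_sg_u=g_ug_s$ with $g_s$ semisimple, $g_u$ unipotent, and both parts lying in $K$ (the Jordan parts of $g$ belong to any algebraic subgroup containing $g$). Let $K_s$ and $K_u$ denote the sets of semisimple and of unipotent elements of $K$, respectively.

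The heart of the argument is that, because $K$ is commutative, $K_s$ and $K_u$ are closed subgroups and the multiplication map $\mu\colon K_s\times K_u\to K$ is an isomorphism of algebraic groups. Indeed, a product of commuting unipotent elements is unipotent, so $K_u$ is a subgroup, and a product of commuting semisimple (hence simultaneously diagonalizable) elements is semisimple, so $K_s$ is a subgroup. Uniqueness of the Jordan decomposition makes $\mu$ bijective, and the inverse $g\mapsto(g_s,g_u)$ is a morphism; together with the closedness of both factors this shows $\mu$ is an isomorphism. Since $K$ is connected and $K\cong K_s\times K_u$, both $K_s$ and $K_u$ are connected.

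It then remains to identify the two factors using the standard structure theory. The group $K_s$ is a connected commutative group all of whose elements are semisimple and pairwise commuting, hence simultaneously diagonalizable; thus $K_s$ is diagonalizable, and being connected it is a torus, $K_s\cong(\Gm)^k$. For $K_u$, a connected commutative unipotent group over a field of characteristic $0$, I would invoke that the matrix exponential and logarithm restrict to mutually inverse isomorphisms between its Lie algebra $L(K_u)$, viewed as an additive group, and $K_u$; commutativity forces all higher brackets in the Baker--Campbell--Hausdorff formula to vanish, so $\exp$ is a group homomorphism, giving $K_u\cong(\Ga)^\ell$ with $\ell=\dim L(K_u)$. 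Combining the two identifications yields $K\cong(\Gm)^k\times(\Ga)^\ell$.

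The main obstacle is establishing that $\mu$ is an isomorphism of algebraic groups rather than merely an abstract group isomorphism; the crucial inputs are that the Jordan parts depend morphically on $g$ and that commutativity makes both $K_s$ and $K_u$ into subgroups. Once this splitting is in hand, the classification of connected diagonalizable groups as tori and of connected commutative unipotent groups in characteristic $0$ via $\exp$ completes the argument with only routine verifications.
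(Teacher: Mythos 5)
Your argument is correct and is essentially the standard proof that the paper relies on (it cites the result and its own sketch follows exactly this route): split $K$ via the Jordan decomposition into $K_s\times K_u$, identify the connected diagonalizable factor $K_s$ as a torus, and identify the connected commutative unipotent factor $K_u$ as a vector group using that the base field has characteristic $0$. No gaps; the only point worth flagging is that the characteristic-$0$ hypothesis is genuinely needed for the $K_u\cong(\Ga)^\ell$ step, which you correctly invoke.
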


Note that this Proposition is false for some other fields.

For any linear algebraic group $K$, let $\A(K)$ be the
complex vector space of rational
homomorphisms $\Phi:K\to \Ga$, sometimes called \emph{additive
functions of $K$} (\cite[\S3.3]{springer}).  
When $K$ is connected, $\Phi\in\A(K)$ is determined by $d\Phi_{(e)}$,
and hence
$\A(K)$ is
finite-dimensional. 
When $K$ has the decomposition of Proposition \ref{prop:ckcl},
the rank of $X(K)$ and the dimension of $\A(K)$ are related
to $k$ and $\ell$.

\begin{lemma}
\label{lemma:rankdim}
If $K=(\Gm)^k\times(\Ga)^\ell$, then $X(K)$ is free of rank $k$ and
$\dim_\C(\A(K))=\ell$.
\end{lemma}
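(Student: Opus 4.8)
The plan is to reduce everything to the two one-dimensional building blocks $\Gm$ and $\Ga$ and then reassemble using the product structure. First I would record how $X$ and $\A$ behave on a direct product. Because both targets $\Gm$ and $\Ga$ are abelian, any character $\chi$ of a product $A\times B$ satisfies $\chi(a,b)=\chi(a,e)\cdot\chi(e,b)$, so $\chi$ is determined by its restrictions along the inclusions $A\hookrightarrow A\times B\hookleftarrow B$; conversely any pair of characters of $A$ and of $B$ assembles into one on $A\times B$. This gives a natural isomorphism $X(A\times B)\cong X(A)\oplus X(B)$, and the identical argument (with the product of values replaced by their sum) gives a $\C$-linear isomorphism $\A(A\times B)\cong\A(A)\oplus\A(B)$. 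Iterating, it then suffices to compute $X$ and $\A$ on each of $\Gm$ and $\Ga$ separately.

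Next I would compute the four basic groups. The characters of $\Gm\cong(\C^*,\cdot)$ are exactly the maps $t\mapsto t^n$ for $n\in\Z$, so $X(\Gm)\cong\Z$; and a regular homomorphism $(\C,+)\to(\C,+)$ is a polynomial $p$ with $p(x+y)=p(x)+p(y)$, which forces $p(x)=cx$, so $\A(\Ga)\cong\C$. The remaining two groups vanish, and this is the heart of the matter: a morphism of algebraic groups preserves the Jordan decomposition, while $\Gm$ consists of semisimple elements and every nonidentity element of $\Ga$ is unipotent. Hence any character $\Ga\to\Gm$ has image among the unipotent elements of a torus, namely the identity alone, so $X(\Ga)=0$; dually, any additive function $\Gm\to\Ga$ sends the semisimple group $\Gm$ into the unipotent group $\Ga$ with trivial image, so $\A(\Gm)=0$.

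Finally I would assemble the pieces. By the product formulas,
\[
X(K)\cong X(\Gm)^{k}\oplus X(\Ga)^{\ell}\cong\Z^k,
\qquad
\A(K)\cong\A(\Gm)^{k}\oplus\A(\Ga)^{\ell}\cong\C^\ell,
\]
where the exponents denote repeated direct sums. Thus $X(K)$ is free abelian of rank $k$ and $\A(K)$ is a $\C$-vector space of dimension $\ell$, as claimed.

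The main obstacle I anticipate is justifying the two vanishing statements cleanly; the product-decomposition step and the two nonzero computations are routine, but $X(\Ga)=0$ and $\A(\Gm)=0$ rest on the incompatibility of tori and unipotent groups. If I wished to avoid invoking the Jordan decomposition I could argue directly instead: a character $\Ga\to\Gm$ is a nonvanishing regular function $u\mapsto q(u)$ on the affine line with $q(u+v)=q(u)q(v)$, and since the only units in the polynomial ring $\C[u]$ are the constants we get $q\equiv 1$; while an additive function $\Gm\to\Ga$ is a Laurent polynomial in $t$ that is additive under multiplication of its arguments, which differentiating at $t=1$ quickly forces to be constant. Either route yields the needed vanishing, but the Jordan-decomposition argument is the most conceptual, so I would present it as the main line.
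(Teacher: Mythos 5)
Your proof is correct and follows essentially the same route as the paper: the paper also invokes the Jordan decomposition to show that characters kill the $(\Ga)^\ell$ factor and additive functions kill the $(\Gm)^k$ factor, then reduces to the routine computations $\rank X((\Gm)^k)=k$ and $\dim\A((\Ga)^\ell)=\ell$, which it leaves as an exercise. Your version merely carries out the reduction factor by factor and spells out the exercise, so it is a more detailed rendering of the same argument.
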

\begin{proof}
By the Jordan decomposition, any $\chi\in X(K)$ will have
$\{1\}^k\times (\Ga)^\ell\subseteq \ker(\chi)$, so $\chi$ factors through to an
element of $X((\Gm)^k)$.
Thus $X(K)\cong X((\Gm)^k)$, and similarly $\A(K)\cong
\A((\Ga)^\ell)$.
Finally, an easy exercise shows that $\rank(X((\Gm)^k))=k$, and
$\dim(\A((\Ga)^\ell))=\ell$.
\end{proof}

Thus, for a prehomogeneous vector space we have:

\begin{corollary}
\label{cor:numcomponents}
Let $(G,\rho,V)$ be a prehomogeneous vector space with open orbit
$\Omega$.  Let $v_0\in\Omega$ and $H=G/[G,G]\cdot G_{v_0}$.
Then the number of
irreducible hypersurface components of $\Omega^c$ equals
$\dim(H)-\dim(\A(H))$.
\end{corollary}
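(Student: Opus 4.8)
The plan is simply to assemble the results already in hand. First I would record that $H = G/([G,G]\cdot G_{v_0})$ is abelian, since it is a quotient of $G$ by a subgroup containing the commutator subgroup $[G,G]$, and connected, being the quotient of the connected group $G$ by a closed algebraic subgroup. Hence Proposition \ref{prop:ckcl} applies and yields an isomorphism of algebraic groups $H \cong (\Gm)^k\times(\Ga)^\ell$ for some nonnegative integers $k$ and $\ell$. In particular $\dim(H) = k+\ell$.

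Next I would identify the number of irreducible hypersurface components of $\Omega^c$ with the integer $k$. By Theorem \ref{thm:pvs1}, this number equals the rank of the free abelian group $X_1(G)$, and by Proposition \ref{prop:x1is} we have $X_1(G)\cong X(H)$, so the number of components equals $\rank(X(H))$. Applying Lemma \ref{lemma:rankdim} with $K = H$ then gives both $\rank(X(H)) = k$ and $\dim_\C(\A(H)) = \ell$.

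Combining these observations, the number of irreducible hypersurface components of $\Omega^c$ is $k = (k+\ell)-\ell = \dim(H) - \dim(\A(H))$, which is the assertion. I would also note that $\dim(H)$ is the quantity claimed to be ``easily computable'' in the introduction, since $\dim(H) = \dim(G) - \dim([G,G]\cdot G_{v_0})$ and $\dim(G_{v_0}) = \dim(G)-\dim(V)$.

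There is no real obstacle here: the corollary is a bookkeeping consequence of the structure theorem for $H$ (Proposition \ref{prop:ckcl}), the rank-and-dimension count of Lemma \ref{lemma:rankdim}, and the identification of the component count with $\rank(X_1(G))$. The only point worth stating explicitly before invoking Proposition \ref{prop:ckcl} is the connectedness of $H$, which is immediate because quotients of connected linear algebraic groups are connected; everything else is the direct substitution $\dim(H)-\dim(\A(H)) = (k+\ell)-\ell = k$.
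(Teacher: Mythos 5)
Your proposal is correct and follows exactly the same route as the paper: combine Theorem \ref{thm:pvs1} and Proposition \ref{prop:x1is} to identify the component count with $\rank(X(H))$, then use Proposition \ref{prop:ckcl} and Lemma \ref{lemma:rankdim} to get $\rank(X(H))=k=\dim(H)-\dim(\A(H))$. The paper's proof is just a one-line citation of these same four results, so your write-up is simply a more explicit version of the same argument.
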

\begin{proof}
By Theorem \ref{thm:pvs1}, Proposition \ref{prop:x1is},
Proposition \ref{prop:ckcl}, and Lemma \ref{lemma:rankdim},
the number of irreducible components equals
$\rank(X_1(G))=\rank(X(H))=\dim(H)-\dim(\A(H))$.
\end{proof}

It is natural, then, to study
these additive functions and their geometric meaning.

\section{Additive relative invariants}
\label{sec:addrelinv}

In this section, we develop for additive functions the analogue of
the multiplicative theory from
\S \ref{sec:intropvs}.
We encourage the reader to consult the examples of \S\ref{sec:exs} as
needed. 
Let $(G,\rho,V)$ be a
prehomogeneous vector space with open orbit $\Omega$.
Let $v_0\in\Omega$
and $H=G/[G,G]\cdot G_{v_0}$.

  \subsection{Definition}
We define additive relative invariants 
similarly to (multiplicative) relative invariants.
\begin{definition}
\label{defn:addrelinv}
A rational function $h$ on $V$ is an
\emph{additive relative invariant} of $(G,\rho,V)$ if there exists a $\Phi\in \A(G)$
so that
\begin{equation}
\label{eqn:star}
h(\rho(g)(v))-h(v)=\Phi(g)
\end{equation}
for all $v\in\Omega$ and $g\in G$.  In this situation, we write 
$h\associatedadd \Phi$.
\end{definition}

By \eqref{eqn:star}, the poles of such an $h$ may occur
only on $\Omega^c$, and $h$ is constant on the orbits of $\ker(\Phi)$.

\subsection{Basic properties}

We now establish some basic facts about additive relative invariants.
First we investigate the uniqueness of the relationship $h\associatedadd
\Phi$.

\begin{proposition}
\label{prop:basicproperties}
Let $(G,\rho,V)$ be a prehomogeneous vector space.
\begin{enumerate}
\item
\label{enit:hdifferbyalpha}
If $h_1\associatedadd \Phi$ and $h_2\associatedadd \Phi$, then
there exists an $\alpha\in \C$ with $h_1=\alpha+h_2$.
\item
\label{enit:Phiareequal}
If $h\associatedadd \Phi_1$ and $h\associatedadd \Phi_2$, then
$\Phi_1=\Phi_2$.
\end{enumerate}
\end{proposition}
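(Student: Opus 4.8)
The plan is to reduce each part directly to the defining identity \eqref{eqn:star} by subtracting two instances of it.

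For the first claim, I would introduce the difference $k=h_1-h_2$, a rational function on $V$. Subtracting the two identities $h_i(\rho(g)(v))-h_i(v)=\Phi(g)$ (for $i=1,2$) at each fixed $g\in G$ cancels the common right-hand side $\Phi(g)$ and yields $k(\rho(g)(v))=k(v)$ for all $v\in\Omega$ and $g\in G$. Since $\Omega$ is a single $G$--orbit, this says precisely that $k$ is constant along $\Omega$; call that value $\alpha\in\C$. One checks that $k$ is genuinely defined at every point where it is evaluated, which is guaranteed on $\Omega$ because the poles of each $h_i$ lie in $\Omega^c$. Then $k-\alpha$ is a rational function vanishing on the Zariski-dense set $\Omega$, so it is identically zero as a rational function; hence $h_1=\alpha+h_2$.

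For the second claim, the argument is immediate. Both hypotheses assert that the single function $(g,v)\mapsto h(\rho(g)(v))-h(v)$ equals $\Phi_1(g)$, and equals $\Phi_2(g)$, for all $v\in\Omega$ and $g\in G$. Fixing any $v\in\Omega$ and letting $g$ range over $G$ gives $\Phi_1(g)=\Phi_2(g)$ for every $g$, so $\Phi_1=\Phi_2$ as additive functions of $G$.

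The only genuine subtlety---hardly an obstacle---is the passage from \emph{constant on $\Omega$} to \emph{constant as a rational function on $V$} in the first claim, which rests on the Zariski density of the open orbit $\Omega$ in $V$ established in \S\ref{sec:intropvs}; everything else is formal cancellation of the common term $\Phi(g)$ (respectively, of the common difference quotient for $h$).
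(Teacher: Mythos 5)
Your proof is correct and follows essentially the same route as the paper's: both arguments fix the constant $\alpha$ as the difference of the two functions at a point of the single open orbit $\Omega$, use \eqref{eqn:star} to propagate the identity $h_1=\alpha+h_2$ over all of $\Omega$, and conclude on $V$ by Zariski density, with part (2) immediate in both. The only cosmetic difference is that you phrase the first part via the $G$--invariance of $k=h_1-h_2$ rather than evaluating along the orbit of a fixed $v_0$, which amounts to the same computation.
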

\begin{proof}
For (1), fix $v_0\in\Omega$ and let $\alpha=h_1(v_0)-h_2(v_0)$,  
so $\Phi(g)+h_1(v_0)=\alpha+\Phi(g)+h_2(v_0)$ for all $g\in G$.
Applying
\eqref{eqn:star} shows that $h_1=\alpha+h_2$ on $\Omega$, and thus on
$V$.

(2) is immediate from \eqref{eqn:star}.
\end{proof}

Let $\A_1(G)$ be the set of $\Phi\in \A(G)$ for which there exists
a rational function $h$ on $V$
with $h\associatedadd \Phi$.
We now identify the additive functions in $\A_1(G)$, analogous to 
Proposition \ref{prop:x1is}.
\begin{proposition}
\label{prop:a1}
As vector spaces, $\A_1(G)\cong \A(H)$, where
$H=G/[G,G]\cdot G_{v_0}$ and $v_0\in\Omega$.
\end{proposition}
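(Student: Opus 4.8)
The plan is to mimic the proof of Proposition \ref{prop:x1is} (the multiplicative analogue), establishing the isomorphism $\A_1(G)\cong\A(H)$ by showing that every $\Phi\in\A_1(G)$ factors through $H=G/[G,G]\cdot G_{v_0}$, and conversely that every element of $\A(H)$ pulls back to an element of $\A_1(G)$. Since both $\A_1(G)$ and $\A(H)$ are complex vector spaces, and the natural maps between them are clearly $\C$-linear, the main work is to identify the two spaces as sets via a well-defined bijection.

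First I would show $\A_1(G)$ is a subspace of $\A(G)$ and that the assignment $\Phi\mapsto\widetilde\Phi$ (where $\widetilde\Phi$ is the induced map on $H$) is well-defined. The key point is that any $\Phi\in\A_1(G)$ kills $[G,G]\cdot G_{v_0}$. For the commutator subgroup this is automatic: since $\Ga$ is abelian, any homomorphism $G\to\Ga$ vanishes on $[G,G]$, so $[G,G]\subseteq\ker(\Phi)$. The less trivial part is to show $G_{v_0}\subseteq\ker(\Phi)$. Here I would use the defining equation \eqref{eqn:star}: if $h\associatedadd\Phi$ and $g\in G_{v_0}$, then $\rho(g)(v_0)=v_0$, so evaluating \eqref{eqn:star} at $v=v_0$ gives $\Phi(g)=h(\rho(g)(v_0))-h(v_0)=h(v_0)-h(v_0)=0$. (One must confirm $v_0$ is not a pole of $h$, which holds since the poles of $h$ lie in $\Omega^c$ by the remark after Definition \ref{defn:addrelinv}.) Thus $\Phi$ vanishes on a generating set of $[G,G]\cdot G_{v_0}$, hence on the whole subgroup, and factors through $H$ to give $\widetilde\Phi\in\A(H)$.

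Next I would establish the reverse direction: given $\widetilde\Phi\in\A(H)$, I compose with the quotient $\pi:G\to H$ to get $\Phi=\widetilde\Phi\circ\pi\in\A(G)$, and I must construct a rational function $h$ on $V$ with $h\associatedadd\Phi$, so that $\Phi\in\A_1(G)$. This construction parallels the recovery of a relative invariant from its character described after \eqref{eqn:relinv}: I would fix $v_0\in\Omega$, set $h(v_0)=0$ (an arbitrary choice, consistent with Proposition \ref{prop:basicproperties}(1)), and \emph{define} $h$ on the orbit $\Omega$ by $h(\rho(g)(v_0))=\Phi(g)$. For this to be well-defined I must check that $\Phi(g)=\Phi(g')$ whenever $\rho(g)(v_0)=\rho(g')(v_0)$, i.e.\ whenever $g^{-1}g'\in G_{v_0}$; but $G_{v_0}\subseteq\ker(\Phi)$ by construction (since $\Phi$ factors through $H$), so $\Phi(g)=\Phi(g')$ as required, and $h$ is a well-defined function on $\Omega$. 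One then verifies that $h$ so defined satisfies \eqref{eqn:star} on $\Omega$ and extends to a rational function on $V$ using the density of $\Omega$ and the fact that $\Phi$ is a rational map.

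The main obstacle I anticipate is the final extension step: showing that the function $h$ defined orbit-wise on $\Omega$ is actually the restriction of a \emph{rational} function on $V$. The recovery argument in the multiplicative case (after \eqref{eqn:relinv}) asserts this extension without detailed justification, so the cleanest route is to argue that, because $\rho$ and $\Phi$ are morphisms of algebraic varieties, the map $g\mapsto\Phi(g)$ descends through the orbit map $G\to\Omega$, $g\mapsto\rho(g)(v_0)$, to a regular function on $\Omega$; since $\Omega$ is a Zariski-open subset of the affine space $V$ and is nonsingular, a regular function on $\Omega$ extends to a rational function on $V$. Once well-definedness and regularity on $\Omega$ are settled, bijectivity and $\C$-linearity of $\Phi\mapsto\widetilde\Phi$ follow formally, and uniqueness of $h$ up to an additive constant (Proposition \ref{prop:basicproperties}(1)) confirms that the two directions are mutually inverse at the level of additive functions. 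This completes the identification $\A_1(G)\cong\A(H)$.
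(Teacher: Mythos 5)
Your proposal is correct and follows essentially the same route as the paper: show every $\Phi\in\A_1(G)$ kills $G_{v_0}$ (by evaluating \eqref{eqn:star} at $v_0$) and $[G,G]$ (since $\Ga$ is abelian) so it factors through $H$, and conversely define $h$ on $\Omega$ by $h(\rho(g)(v_0))=\Phi(g)$ and extend it to a rational function on $V$. The paper delegates the regularity-and-extension step to the argument of Kimura's Proposition 2.11 (noting it uses characteristic $0$), whereas you sketch the descent through the orbit map directly, but the content is the same.
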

\begin{proof}
Let $\Phi\in\A_1(G)$ with $h\associatedadd \Phi$.
Evaluating \eqref{eqn:star} at $v_0\in\Omega$ and $g\in G_{v_0}$ shows
that $G_{v_0}\subseteq \ker(\Phi)$.  Since $\Ga$ is abelian,
$[G,G]\subseteq \ker(\Phi)$.  Thus any $\Phi\in\A_1(G)$ factors
through
the quotient $\pi:G\to H$
to a unique $\overline{\Phi}\in\A(H)$,
with $\Phi=\overline{\Phi}\circ \pi$.
The map $\rho:\A_1(G)\to \A(H)$ defined by $\rho(\Phi)=\overline{\Phi}$
is $\C$-linear.

Conversely, define the $\C$-linear map $\sigma:\A(H)\to \A(G)$
by $\sigma(\overline{\Phi})=\overline{\Phi}\circ \pi$.
For $\overline{\Phi}\in \A(H)$,
we have $G_{v_0}\subseteq \ker(\overline{\Phi}\circ \pi)$, and hence
we may define a function
$\overline{h}:\Omega\to\C$ by
$\overline{h}(\rho(g)(v_0))=(\overline{\Phi}\circ \pi)(g)$.
By the argument%
\footnote{This uses the fact that $\C$ has characteristic $0$.}
of \cite[Proposition 2.11]{kimura},
$\overline{h}$ is 
a regular function on $\Omega$
that may be extended to 
a rational function $h$ on $V$.
By construction, $h(\rho(g)(v))-h(v)=(\overline{\Phi}\circ \pi)(g)$
for $v=v_0$ and all $g\in G$, but this implies that this equation holds for all $v\in\Omega$
and all $g\in G$.
Thus $h\associatedadd \overline{\Phi}\circ \pi$,
and $\sigma:\A(H)\to \A_1(G)$.

Finally, check that $\rho$ and $\sigma$ are inverses
using the uniqueness of the factorization through $\pi$.
\end{proof}

Additive relative invariants are the rational functions on $V$
having the following property. 

\begin{proposition}
\label{prop:whichrationalfunctions}
Let $h:V\to\C$ be a rational function on $V$, holomorphic on $\Omega$.
Then $h$ is an additive relative invariant if, and only if,
for all $g\in G$,
$$h(\rho(g)(v))-h(v)\textrm{ is independent of $v\in\Omega$.}$$
\end{proposition}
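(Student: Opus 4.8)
The plan is to prove the two implications separately, with essentially all of the work in the ``if'' direction. The ``only if'' direction is immediate from the definition: if $h\associatedadd\Phi$ for some $\Phi\in\A(G)$, then \eqref{eqn:star} reads $h(\rho(g)(v))-h(v)=\Phi(g)$, whose right-hand side plainly does not depend on $v$.

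For the converse I would fix $v_0\in\Omega$ and define $\Phi:G\to\C$ by $\Phi(g)=h(\rho(g)(v_0))-h(v_0)$. The hypothesis says precisely that this value is unchanged when $v_0$ is replaced by any other point of $\Omega$, so that $h(\rho(g)(v))-h(v)=\Phi(g)$ already holds for all $v\in\Omega$ and all $g\in G$. Hence \eqref{eqn:star} is satisfied as soon as we check that $\Phi\in\A(G)$, i.e.\ that $\Phi$ is a rational homomorphism $G\to\Ga$; this is where the content lies, and it splits into a homomorphism property and a rationality property.

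To see that $\Phi$ is a homomorphism I would exploit that $\Omega$, being the $G$--orbit of $v_0$, is $G$--invariant: for $g_2\in G$ and $v\in\Omega$ the point $w:=\rho(g_2)(v)$ again lies in $\Omega$, so applying the point-independent difference at $w$ gives $h(\rho(g_1)(w))-h(w)=\Phi(g_1)$, while $h(w)-h(v)=\Phi(g_2)$ by definition. Then
$$\Phi(g_1g_2)=h(\rho(g_1)(w))-h(v)=\bigl(h(\rho(g_1)(w))-h(w)\bigr)+\bigl(h(w)-h(v)\bigr)=\Phi(g_1)+\Phi(g_2),$$
and since $\Phi(e)=0$ this exhibits $\Phi$ as a homomorphism from $G$ into $(\C,+)=\Ga$. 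For rationality I would note that the orbit map $g\mapsto\rho(g)(v_0)$ is a morphism $G\to V$ whose image lies in $\Omega$ (again by $G$--invariance), while $h$, being rational and holomorphic on $\Omega$, is regular there; thus $\Phi$ is a composite of regular maps minus a constant, hence regular and in particular a rational homomorphism. This yields $h\associatedadd\Phi$, so $h$ is an additive relative invariant.

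I expect the homomorphism computation to be the main---indeed essentially the only---obstacle: the hypothesis by itself produces only a well-defined function of $g$, and it is the fact that the open orbit is $G$--invariant that upgrades this function to a genuine homomorphism. The rationality step is then routine, relying only on composing the orbit morphism with the restriction of $h$ to the locus $\Omega$, where it has no poles.
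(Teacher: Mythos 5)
Your proof is correct and follows essentially the same route as the paper: define $\Phi(g)=h(\rho(g)(v))-h(v)$, use the independence of $v$ together with the $G$--invariance of $\Omega$ to telescope the difference through an intermediate point and obtain the homomorphism identity, and observe that $\Phi$ is regular as a composite of regular maps. The only cosmetic difference is that the paper verifies $\Phi(g_1g_2^{-1})=\Phi(g_1)-\Phi(g_2)$ while you verify $\Phi(g_1g_2)=\Phi(g_1)+\Phi(g_2)$ directly.
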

\begin{proof}
By \eqref{eqn:star}, additive relative invariants have this property.

Conversely, define
$\Phi:G\to\C$ by 
$\Phi(g)=h(\rho(g)(v))-h(v)$ for any $v\in \Omega$.
As a composition of regular functions, $\Phi$ is regular on $G$.
Let $g_1,g_2\in G$, and $v\in \Omega$.  Then
\begin{eqnarray*}
\Phi(g_1 g_2^{-1})
&=&h\!\left(\rho(g_1 g_2^{-1})(v)\right)-h(v) \\
&=&h\!\left(\rho(g_1)(\rho(g_2^{-1})(v))\right)-h\!\left(\rho(g_2^{-1})(v)\right)
 +h\!\left(\rho(g_2^{-1})(v)\right)-h(v) \\
&=&\Big(h(\rho(g_1)(v_1))-h(v_1)\Big)
  +\Big(h(v_1)-h(\rho(g_2)(v_1))\Big),
\end{eqnarray*}
where $v_1=\rho(g_2^{-1})(v)\in \Omega$.
By our hypothesis, 
$\Phi(g_1 g_2^{-1})=\Phi(g_1)-\Phi(g_2)$.
Thus $\Phi:G\to \Ga$ is a homomorphism of algebraic groups, and
since $h\associatedadd \Phi$
we have
$\Phi\in \A_1(G)$.
%
\end{proof}

  \subsection{Homogeneity}

Relative invariants are always homogeneous rational functions on $V$
because $\rho$ is a linear representation.
Similarly, additive relative invariants are
homogeneous of degree $0$.

\begin{proposition}
\label{prop:homogeneous}
If $h\associatedadd \Phi$ and $h$ is not the zero function,
then $h$ may be written as
$h=\frac{h_1}{f_1}$, where $h_1$ and $f_1$ are homogeneous polynomials
with $\deg(h_1)=\deg(f_1)$, $h_1$ and $f_1$ have no common factors,
and $f_1$ is a relative invariant.
\end{proposition}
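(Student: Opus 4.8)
The plan is to first prove that $h$ is homogeneous of degree $0$, and then read off the claimed form from its reduced representation as a quotient of polynomials. The crucial observation for homogeneity is that $\Omega$ is a cone: by Remark \ref{rem:omegac} (or since the basic relative invariants of Theorem \ref{thm:pvs1} are homogeneous) the complement $\Omega^c$ is the zero set of homogeneous polynomials, so $t\Omega=\Omega$ for every $t\in\C^*$. I would exploit this together with the linearity of $\rho(g)$. Fix $t\in\C^*$ and set $h_t(v)=h(tv)$; since $\rho(g)$ is linear and $tv\in\Omega$ whenever $v\in\Omega$, we get $h_t(\rho(g)(v))-h_t(v)=h(\rho(g)(tv))-h(tv)=\Phi(g)$ for all $v\in\Omega$ and $g\in G$. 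Thus $h_t\associatedadd\Phi$ as well, so by Proposition \ref{prop:basicproperties}(\ref{enit:hdifferbyalpha}) there is a constant $\alpha(t)\in\C$ with $h(tv)=h(v)+\alpha(t)$ for all $v$.

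Next I would analyze $\alpha$. Evaluating at a fixed $v_0\in\Omega$ gives $\alpha(t)=h(tv_0)-h(v_0)$, a rational (regular away from poles) function of $t$; and comparing $h(tsv)=h(v)+\alpha(ts)$ with $h(tsv)=h(sv)+\alpha(t)=h(v)+\alpha(s)+\alpha(t)$ shows $\alpha(ts)=\alpha(t)+\alpha(s)$, so $\alpha\in\A(\Gm)$. But $\A(\Gm)=0$: a Laurent polynomial $\sum c_n t^n$ satisfying $\alpha(ts)=\alpha(t)+\alpha(s)$ forces every $c_n=0$, since the cross terms $t^n s^n$ on the left cannot be matched on the right. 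Hence $\alpha\equiv 0$, so $h(tv)=h(v)$ on $\Omega$ and, by density, on $V$; that is, $h$ is homogeneous of degree $0$.

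Finally I would extract the form $h=\frac{h_1}{f_1}$. Write $h=\frac{h_1}{f_1}$ with $h_1,f_1$ polynomials having no common factor; since $h$ is not the zero function, $h_1\neq 0$. Degree-$0$ homogeneity forces $h_1$ and $f_1$ to be homogeneous of the same degree, because a homogeneous rational function in lowest terms has homogeneous numerator and denominator (unique up to a common scalar). It remains to see that $f_1$ is a relative invariant. By the remark following Definition \ref{defn:addrelinv}, the poles of $h$ lie only on $\Omega^c$; since $h_1$ and $f_1$ are coprime, the polar locus of $h$ is exactly $\{f_1=0\}$, so every irreducible factor of $f_1$ cuts out a hypersurface contained in $\Omega^c$. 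Each such hypersurface is one of the irreducible components $S_i$ of $\Omega^c$, whose defining irreducible polynomial is the basic relative invariant $f_i$ of Theorem \ref{thm:pvs1}. Hence $f_1=c\prod_i (f_i)^{m_i}$ for a nonzero constant $c$ and integers $m_i\geq 0$, which is a relative invariant by that theorem.

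The main obstacle is the homogeneity step, specifically the device of replacing $v$ by $tv$ to manufacture a second additive relative invariant associated with the same $\Phi$, and then recognizing the resulting scalar ambiguity $\alpha$ as an element of $\A(\Gm)=0$. Once homogeneity of degree $0$ is established, the identification of $f_1$ as a relative invariant is routine.
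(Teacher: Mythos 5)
Your proof is correct and follows essentially the same route as the paper: the scaling device $h_t(v)=h(tv)$, the resulting homomorphism $\alpha:\Gm\to\Ga$, its vanishing, and then the identification of $f_1$ as a relative invariant via Theorem \ref{thm:pvs1}. The only (immaterial) difference is that you kill $\alpha$ by a direct Laurent-polynomial computation, whereas the paper invokes the Jordan decomposition to see that there is no nontrivial homomorphism $\Gm\to\Ga$.
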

\begin{proof}
First note that by Remark \ref{rem:omegac}, for any
$t\in\C^*$, we have
$v\in\Omega$ if and
only if $t\cdot v\in\Omega$; we use this fact implicitly in the
rest of the proof since Definition \ref{defn:addrelinv} describes the
behavior of $h$ only on $\Omega$.

Let $t\in\C^*$, and define the rational function $h_t$ on $V$ by
$h_t(v)=h(t\cdot v)$.  Since $\rho$ is a linear representation,
\begin{equation}
\label{eqn:homog1}
h_t(\rho(g)(v))=h(t\cdot \rho(g)(v))=h(\rho(g)(t\cdot v)).
\end{equation}
Applying \eqref{eqn:star} to \eqref{eqn:homog1} gives
\begin{equation}
\label{eqn:homog2}
h_t(\rho(g)(v))=\Phi(g)+h(t\cdot v)=\Phi(g)+h_t(v).
\end{equation}
Since \eqref{eqn:homog2} holds for all $g\in G$ and $v\in \Omega$,
we have $h_t\associatedadd \Phi$.

By Proposition \ref{prop:basicproperties}\eqref{enit:hdifferbyalpha},
there exists a function
$\alpha:\C^*\to \C$ such that
\begin{equation}
\label{eqn:homog3}
h(t\cdot v)=h(v)+\alpha(t)
\end{equation}
for all $t\in\C^*$ and $v\in \Omega$.
If $s,t\in\C^*$ and $v\in\Omega$, then using \eqref{eqn:homog3}
repeatedly shows 
\begin{equation*}
h(v)+\alpha(st)=h(s(t\cdot v))=h(t\cdot
v)+\alpha(s)=h(v)+\alpha(t)+\alpha(s),
\end{equation*}
or
$\alpha(st)=\alpha(s)+\alpha(t)$.
By \eqref{eqn:homog3}, we have $\alpha(1)=0$,
and hence $\alpha:(\C^*,\cdot)\to(\C,+)$ is a group homomorphism.

Fixing some $v\in\Omega$ and instead using \eqref{eqn:homog3} to define
$\alpha$ shows that
$\alpha:\Gm\to\Ga$ is regular, hence
a homomorphism of complex linear
algebraic groups.  By the Jordan decomposition,
$\alpha=0$.
Then $h(t\cdot v)=h(v)$ for all nonzero $t\in \C$ and $v\in\Omega$;
by density, $h$ is homogeneous
of degree $0$. 

Thus we may write $h=\frac{h_1}{f_1}$, with $h_1$ and $f_1$ homogeneous polynomials
of equal degree, and without common factors.  Since $h$ may only have
poles on $\Omega^c$, $f_1$ defines a hypersurface
in $\Omega^c$ or is a nonzero constant.
By Theorem \ref{thm:pvs1}, $f_1$ is a relative
invariant.
\end{proof}

This gives an apparently nontrivial result about the structure of
prehomogeneous vector spaces for which the exceptional orbit variety
has no hypersurface components.

\begin{corollary}
\label{cor:nontrivial}
Let $(G,\rho,V)$ be a prehomogeneous vector space, let
$v_0\in \Omega$,
and let $r$ be the number of irreducible hypersurface components of
$V\setminus \Omega$.
\begin{enumerate}
\item
\label{enit:X1is1}
If $r=0$,
then
$\A_1(G)=\{0\}$ and $G=[G,G]\cdot G_{v_0}$.
\item
\label{enit:ddimleq1}
If $d:=\dim(G/[G,G]\cdot G_{v_0})\leq 1$, then
$\A_1(G)=\{0\}$ and $r=d$.
\end{enumerate}
\end{corollary}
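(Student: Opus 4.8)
The plan is to reduce both parts to a single observation coming from Proposition \ref{prop:homogeneous}: a nonzero element of $\A_1(G)$ forces a genuine (nonconstant) relative invariant, and hence $r\geq 1$. Throughout I would use the dictionary already assembled: by Proposition \ref{prop:ckcl} we may write $H\cong(\Gm)^k\times(\Ga)^\ell$, and then by Proposition \ref{prop:x1is}, Proposition \ref{prop:a1}, and Lemma \ref{lemma:rankdim} we have $r=\rank X(H)=k$ and $\dim_\C\A_1(G)=\dim_\C\A(H)=\ell$, whence $\dim(H)=k+\ell$.

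To establish the observation, suppose $\A_1(G)\neq\{0\}$, so there is a nonzero $\Phi\in\A_1(G)$ with $h\associatedadd\Phi$. Since $h(\rho(g)(v))-h(v)=\Phi(g)$ is nonzero for some $g$, the function $h$ is nonconstant. Writing $h=\frac{h_1}{f_1}$ in lowest terms with $\deg(h_1)=\deg(f_1)$ as in Proposition \ref{prop:homogeneous}, the denominator $f_1$ cannot be a nonzero constant, for then $\deg(h_1)=0$ and $h$ would be constant. Hence $f_1$ is a nonconstant relative invariant, defining a hypersurface component of $\Omega^c$, so by Theorem \ref{thm:pvs1} we get $r\geq 1$. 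Equivalently, $r=0$ implies $\A_1(G)=\{0\}$.

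For part \eqref{enit:X1is1}, if $r=0$ then the observation gives $\A_1(G)=\{0\}$, so $\ell=0$, while $r=k=0$; thus $H$ is trivial, i.e.\ $G=[G,G]\cdot G_{v_0}$. For part \eqref{enit:ddimleq1}, assume $d=\dim(H)=k+\ell\leq 1$ and suppose toward a contradiction that $\ell\geq 1$; then $\A_1(G)\neq\{0\}$, so the observation forces $r=k\geq 1$ and hence $k+\ell\geq 2$, contradicting $d\leq 1$. Therefore $\ell=0$, giving $\A_1(G)\cong\A(H)=\{0\}$ and $r=k=d$.

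The argument is bookkeeping on top of Proposition \ref{prop:homogeneous}, so I anticipate no serious obstacle. The only step requiring care is the passage from a \emph{nonzero} additive function to a \emph{nonconstant} $h$, and thence to a denominator $f_1$ that genuinely defines a hypersurface rather than being a constant; this is exactly what converts a nontrivial additive function into an honest hypersurface component, and so ties $\ell$ to $k$ in the two low-dimensional regimes of the statement.
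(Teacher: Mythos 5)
Your proof is correct and follows essentially the same route as the paper: both arguments hinge on Proposition \ref{prop:homogeneous} to show that a nonzero additive function would force a nonconstant polynomial relative invariant in the denominator, which Theorem \ref{thm:pvs1} rules out when $r=0$, and then both reduce part (2) to the identity $r=\dim(H)-\dim(\A_1(G))$ together with part (1). The only cosmetic difference is that you derive the contradiction from the classification of relative invariants as monomials in the basic ones, while the paper derives it from the triviality of the associated character; these are interchangeable consequences of the same theorem.
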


In particular, 
$V\setminus \Omega$ has no hypersurface components if and only if
$G=[G,G]\cdot G_{v_0}$ for $v_0\in\Omega$.

\begin{proof}
Suppose $r=0$, so $X_1(G)=\{1\}$.
Let $\Phi\in \A_1(G)$, and choose $h$ with  
$h\associatedadd \Phi$.
Assume that $\Phi$ is nonzero and hence $h$ is not constant.
By Proposition \ref{prop:homogeneous},
write $h=\frac{h_1}{f_1}$ for polynomials $f_1$ and $h_1$ with
$\deg(h_1)=\deg(f_1)>0$, and $f_1$ a relative invariant.
If $f_1\associatedmult \chi_1$, then by our hypothesis $\chi_1=1$.
By \eqref{eqn:relinv}, $f_1$ is a nonzero constant on $\Omega$,
contradicting the fact that $\deg(f_1)>0$.
Thus $\Phi=0$ and so $\A_1(G)=\{0\}$.
Since there are no nontrivial characters or additive functions,
by Proposition \ref{prop:ckcl} and Lemma \ref{lemma:rankdim}
we find that
$H=G/[G,G]\cdot G_{v_0}$ consists of a single point, hence
$G=[G,G]\cdot G_{v_0}$, proving \eqref{enit:X1is1}.

Now suppose $d\leq 1$.  If $d=0$, then 
by Corollary \ref{cor:numcomponents} and Proposition \ref{prop:a1},
$r=0-\dim(\A_1(G))\geq 0$, implying the statement.
If $d=1$, then by the same reasoning either $r=0$ or $r=1$, but the
former is impossible by \eqref{enit:X1is1}.
Now apply Corollary \ref{cor:numcomponents} to show $\A_1(G)=\{0\}$.
\end{proof}

  \subsection{Global equation}
Our definition of an additive relative invariant
only involves the behavior of the function on
$\Omega$.
We may describe the 
behavior on all of $V$.
\begin{proposition}
\label{prop:globaleqn}
Let $h\associatedadd \Phi$ with $h$ nonzero.
As in Proposition \ref{prop:homogeneous}, write
$h=\frac{h_1}{f_1}$ for polynomials $h_1$ and $f_1$,
with $f_1\associatedmult \chi$.
Then for all $g\in G$ and $v\in V$, 
\begin{equation}
\label{eqn:allgv}
h_1(\rho(g)(v))=\chi(g)\cdot 
 \left( f_1(v)\cdot \Phi(g)+h_1(v) \right).
\end{equation}
\end{proposition}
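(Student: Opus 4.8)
The plan is to first establish \eqref{eqn:allgv} on the open orbit $\Omega$ by a direct algebraic manipulation, and then extend it to all of $V$ by a density argument. Throughout I would use that, since $f_1\associatedmult\chi$, equation \eqref{eqn:relinv} gives $f_1(\rho(g)(v))=\chi(g)\,f_1(v)$ for all $v\in\Omega$; as both sides are polynomials in $v$ for each fixed $g$ and $\Omega$ is Zariski dense in $V$, this identity in fact holds for all $v\in V$.

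Next I would work on $\Omega$. There $h=\frac{h_1}{f_1}$ is holomorphic, so $f_1(v)\neq 0$ (the zeros of $f_1$ lie in $\Omega^c$, as noted in the proof of Proposition \ref{prop:homogeneous}), and $\chi(g)\in\C^*$ is likewise nonzero. Rewriting \eqref{eqn:star} as
\[
\frac{h_1(\rho(g)(v))}{f_1(\rho(g)(v))}-\frac{h_1(v)}{f_1(v)}=\Phi(g),
\]
substituting $f_1(\rho(g)(v))=\chi(g)\,f_1(v)$, and clearing the nonzero denominator $\chi(g)\,f_1(v)$ yields
\[
h_1(\rho(g)(v))-\chi(g)\,h_1(v)=\chi(g)\,\Phi(g)\,f_1(v),
\]
which rearranges to exactly \eqref{eqn:allgv}. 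Hence \eqref{eqn:allgv} holds for all $g\in G$ and all $v\in\Omega$.

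Finally I would pass from $\Omega$ to $V$. Fix $g\in G$. Then $\chi(g)$ and $\Phi(g)$ are constants in $\C$, $\rho(g)$ is linear, and $h_1,f_1$ are polynomials, so both sides of \eqref{eqn:allgv} are polynomial functions of $v$. They agree on the Zariski-dense set $\Omega$, and therefore agree on all of $V$; since $g$ was arbitrary, \eqref{eqn:allgv} holds for all $g\in G$ and $v\in V$.

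I do not anticipate a serious obstacle. The only points needing care are the nonvanishing of $f_1$ on $\Omega$, which legitimizes clearing denominators there, and the observation that for fixed $g$ both sides are genuinely polynomial in $v$, which is what lets the density argument promote the identity from $\Omega$ to all of $V$.
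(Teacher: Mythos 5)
Your proof is correct and follows essentially the same route as the paper: verify \eqref{eqn:allgv} on $\Omega$ by combining \eqref{eqn:star} with \eqref{eqn:relinv} and clearing the nonvanishing denominator, then extend to all of $V$ by Zariski density (the paper phrases this as two regular functions on $G\times V$ agreeing on $G\times\Omega$, but the substance is identical).
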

\begin{proof}
Define the regular functions
$L,R:G\times V\to \C$ using the left, respectively, right sides
of \eqref{eqn:allgv}.
By \eqref{eqn:star} and \eqref{eqn:relinv}, these functions agree on $G\times \Omega$, and
thus on $G\times V$.
\end{proof}

Of course, \eqref{eqn:allgv} also holds for
$h=0
\associatedadd \Phi=0$ if $h_1=0$ and $f_1$ is a nonzero constant.

\subsection{Geometric interpretation}
We may now provide a 
geometric interpretation of additive
relative invariants.
For polynomials $h_1,h_2,f$ on $V$ with $f$ irreducible and nonzero,
we will say that \emph{$h_1$ and $h_2$ agree to order $k$ on $V(f)$}
if $h_1-h_2\in (f)^{k+1}$, or equivalently, if in coordinates
$\frac{\partial^I}{\partial x^I}(h_1-h_2)(v)=0$ for all $v\in V(f)$
and all multi-indices $I$ with $|I|\leq k$.

\begin{proposition}
\label{prop:funsagreeon}
Let $f_1,\ldots,f_r$ be the basic relative invariants, with
$f_i\associatedmult \chi_i$.
Let $h=\frac{h_1}{f}$ for homogeneous polynomials $h_1$ and $f$ with
$\deg(h_1)=\deg(f)$, 
$h$ written in lowest terms, 
and $f=f_1^{k_1}\cdots f_r^{k_r}$ with $k_i\geq 0$. Let
$\chi=\chi_1^{k_1}\cdots \chi_r^{k_r}$.
Then the following are equivalent:
\begin{enumerate}
\item
\label{enit:haddrelinv}
$h$ is an additive relative invariant.
\item
\label{enit:hagreeorder}
For all $i=1,\ldots,r$ with $k_i>0$,
and all $g\in G$,
$h_1\circ \rho(g)$ and $\chi(g)\cdot h_1$ agree to order $k_i-1$ on
$V(f_i)$. 
\item
\label{enit:hdivisible}
For all $g\in G$, $h_1\circ \rho(g)-\chi(g)\cdot h_1\in\C[V]$ is divisible by
$f$.
\end{enumerate}
In particular, when $f$ is reduced and we let
$V_\epsilon=\{x: f(x)=h_1(x)-\epsilon=0\}$
for all $\epsilon\in \C$,
then
$h$ is an additive relative invariant if and only if
$\rho(g)(V_\epsilon)=V_{\chi(g)\cdot \epsilon}$ for all $g\in G$ and
$\epsilon\in\C$.
\end{proposition}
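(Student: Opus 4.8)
The plan is to deduce the geometric statement from the equivalence of conditions \eqref{enit:haddrelinv} and \eqref{enit:hdivisible} already established above. Concretely, under the hypothesis that $f$ is reduced, I would show directly that condition \eqref{enit:hdivisible} is equivalent to the assertion that $\rho(g)(V_\epsilon)=V_{\chi(g)\cdot\epsilon}$ for all $g\in G$ and $\epsilon\in\C$; combined with \eqref{enit:haddrelinv}$\Leftrightarrow$\eqref{enit:hdivisible}, this yields the claim. Throughout I would use that, because $f\associatedmult\chi$, the polynomial identity $f(\rho(g)(v))=\chi(g)\cdot f(v)$ holds not merely on $\Omega$ but on all of $V$, since both sides are regular and agree on the dense subset $\Omega$. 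In particular, as $\chi(g)\neq 0$, each $\rho(g)$ maps $V(f)$ to itself.

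For the direction \eqref{enit:hdivisible}$\Rightarrow$ geometric, suppose $h_1\circ\rho(g)-\chi(g)\cdot h_1=f\cdot q_g$ for some $q_g\in\C[V]$. Given $x\in V_\epsilon$, so that $f(x)=0$ and $h_1(x)=\epsilon$, I would compute $f(\rho(g)(x))=\chi(g)f(x)=0$ and $h_1(\rho(g)(x))=\chi(g)h_1(x)+f(x)q_g(x)=\chi(g)\epsilon$, whence $\rho(g)(x)\in V_{\chi(g)\cdot\epsilon}$; this proves $\rho(g)(V_\epsilon)\subseteq V_{\chi(g)\cdot\epsilon}$. The reverse inclusion follows formally: applying this inclusion with $g^{-1}$ and $\chi(g)\cdot\epsilon$ in place of $g$ and $\epsilon$ gives $\rho(g^{-1})(V_{\chi(g)\cdot\epsilon})\subseteq V_\epsilon$ (using $\chi(g^{-1})\chi(g)=1$), and applying $\rho(g)$ yields $V_{\chi(g)\cdot\epsilon}\subseteq\rho(g)(V_\epsilon)$.

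For the converse, geometric $\Rightarrow$ \eqref{enit:hdivisible}, I would fix $g\in G$ and show that the polynomial $p:=h_1\circ\rho(g)-\chi(g)\cdot h_1$ vanishes on $V(f)$. Given $x\in V(f)$, set $\epsilon=h_1(x)$; then $x\in V_\epsilon$, so by hypothesis $\rho(g)(x)\in V_{\chi(g)\cdot\epsilon}$, i.e.\ $h_1(\rho(g)(x))=\chi(g)\cdot\epsilon=\chi(g)h_1(x)$, giving $p(x)=0$. Thus $p$ vanishes on $V(f)$, and here is where reducedness of $f$ enters: since the $f_i$ are distinct irreducibles, a reduced $f$ generates a radical ideal in $\C[V]$, so by the Nullstellensatz $p$ vanishing on $V(f)$ forces $p\in(f)$, which is exactly \eqref{enit:hdivisible}. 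The only genuinely delicate point is this last implication; I expect the main obstacle to be correctly invoking the squarefree hypothesis to pass from set-theoretic vanishing to divisibility. Without reducedness, vanishing on $V(f)$ would guarantee divisibility only by the radical of $f$ rather than by $f$ itself, and one would instead need the order-of-contact statement \eqref{enit:hagreeorder} rather than mere vanishing, which is precisely why the geometric reformulation is stated only in the reduced case.
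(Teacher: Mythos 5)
Your argument for the final ``In particular'' clause is correct and matches the paper's: the forward direction follows from divisibility, and the converse uses that a polynomial vanishing on $V(f)$ with $f$ reduced lies in $(f)$. However, there is a genuine gap: you have only proved the last sentence of the proposition. You open by saying you will use ``the equivalence of conditions \eqref{enit:haddrelinv} and \eqref{enit:hdivisible} already established above,'' but that equivalence is not established anywhere earlier in the paper --- it is the main content of this very proposition, and your proposal never proves it (nor does it address condition \eqref{enit:hagreeorder} at all).

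The missing implications require real work. For \eqref{enit:haddrelinv}$\Rightarrow$\eqref{enit:hagreeorder} one applies the global equation \eqref{eqn:allgv} of Proposition \ref{prop:globaleqn} to conclude $h_1\circ\rho(g)-\chi(g)\cdot h_1\in (f_i)^{k_i}$ for each $i$, which is exactly the order-of-contact statement; \eqref{enit:hagreeorder}$\Rightarrow$\eqref{enit:hdivisible} then follows because the $f_i$ are distinct irreducibles. The delicate step is \eqref{enit:hdivisible}$\Rightarrow$\eqref{enit:haddrelinv}: writing $h_1(\rho(g)(v))-\chi(g)h_1(v)=f(v)\cdot\beta(g,v)$ with $\beta\in\C[G\times V]$, one must argue that for each fixed $g$ the left side is homogeneous of degree $\deg(h_1)=\deg(f)$ in $v$, so $\beta(g,-)$ is homogeneous of degree $0$, i.e.\ constant in $v$; hence $\beta\in\C[G]$, and dividing by $f$ and invoking \eqref{eqn:relinv} together with Proposition \ref{prop:whichrationalfunctions} shows $h$ is an additive relative invariant. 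Without this homogeneity argument there is no reason the quotient $\beta$ should be independent of $v$, and the criterion of Proposition \ref{prop:whichrationalfunctions} could not be applied. Note also that your geometric reformulation, as you correctly observe, only detects vanishing to order $0$ on $V(f)$, which is why the nonreduced case genuinely needs condition \eqref{enit:hagreeorder} rather than the set-theoretic statement.
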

\begin{proof}
If \eqref{enit:haddrelinv}, then by \eqref{eqn:allgv},
for all $g\in G$ and all $i=1,\ldots,r$ we have
$$(h_1\circ \rho(g))-\chi(g)\cdot h_1\in (f_i)^{k_i},$$
implying \eqref{enit:hagreeorder}.

\eqref{enit:hagreeorder} implies \eqref{enit:hdivisible} because
$f_1,\ldots,f_r$ all define
distinct irreducible hypersurfaces. 

If \eqref{enit:hdivisible}, then for all $g,v$ we have 
\begin{equation}
\label{eqn:h1rhogvminus}
h_1(\rho(g)(v))-\chi(g)\cdot h_1(v)=f(v)\cdot \beta(g,v)
\end{equation}
for some $\beta\in \C[G\times V]$.
For each $g\in G$, both sides of 
\eqref{eqn:h1rhogvminus} should be zero or homogeneous of degree
$\deg(h_1)=\deg(f)$ as functions on $V$; hence,
$\beta(g,-)\in \C[V]$ is zero or
homogeneous of degree $0$, i.e., constant, and so $\beta\in \C[G]$.
Finally, rearrange \eqref{eqn:h1rhogvminus} and apply
\eqref{eqn:relinv} and Proposition \ref{prop:whichrationalfunctions}.

For the last part of the claim,
if $h$ is an additive relative invariant then
$\rho(g)(V_\epsilon)=V_{\chi(g)\cdot \epsilon}$ by \eqref{eqn:allgv}.
Conversely,
$\rho(g)(V_\epsilon)=V_{\chi(g)\cdot \epsilon}$
for all $\epsilon\in\C$
implies that
$h_1\circ \rho(g)-\chi(g)\cdot h_1$
vanishes on $V(f)$, and hence is divisible by $f$, i.e.,
\eqref{enit:hdivisible}.
\end{proof}

  \subsection{Vanishing lemma}
\label{subsec:vanishing}

We now prove that certain 
characters and additive functions
vanish on certain isotropy subgroups.
This is a key observation used in \S\ref{sec:lfds}.

\begin{lemma}
\label{lemma:newvanishingthm}
Let $v\in V$.
\begin{enumerate}[(i)]
\item
\label{enit:gvkerchi}
Let $f_1$ be a relative invariant, $f_1\associatedmult \chi$.
If $f_1$ is defined at $v$ and $f_1(v)\neq 0$, then $G_{v}\subseteq \ker(\chi)$.
\item
\label{enit:gvkerchiorphi}
Let $\frac{h_1}{f_1}$ be an additive relative invariant
written as in Proposition \ref{prop:homogeneous},
with $h_1$ nonzero, $\frac{h_1}{f_1}\associatedadd \Phi$, and
$f_1\associatedmult \chi$.
Then the following conditions are equivalent:
\begin{itemize}
\item $h_1(v)=0$ or $G_v\subseteq \ker(\chi)$,
\item $f_1(v)=0$ or $G_v\subseteq \ker(\Phi)$.
\end{itemize}
\end{enumerate}
\end{lemma}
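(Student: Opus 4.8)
The plan is to prove both parts by exploiting the global equation \eqref{eqn:allgv}, which encodes the transformation law on all of $V$ and specializes nicely at a point $v$ fixed by $g\in G_v$. For part \eqref{enit:gvkerchi}, I would take $g\in G_v$, so $\rho(g)(v)=v$, and apply the defining relation \eqref{eqn:relinv} for the relative invariant $f_1\associatedmult\chi$. This gives $f_1(v)=f_1(\rho(g)(v))=\chi(g)\cdot f_1(v)$. Since $f_1$ is defined at $v$ with $f_1(v)\neq 0$, we may cancel to obtain $\chi(g)=1$. As this holds for every $g\in G_v$, we conclude $G_v\subseteq\ker(\chi)$. (One subtlety: the hypothesis only says $f_1(v)\neq0$, so I should note that $\chi$ and $f_1$ transform as in \eqref{eqn:relinv}, whose validity extends from $\Omega$ to all points where $f_1$ is defined by the polynomial identity in Proposition \ref{prop:globaleqn}, or simply because $f_1\circ\rho(g)=\chi(g)f_1$ is an identity of rational functions on $V$.)

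For part \eqref{enit:gvkerchiorphi}, the key tool is \eqref{eqn:allgv}, which for the additive relative invariant $\frac{h_1}{f_1}\associatedadd\Phi$ with $f_1\associatedmult\chi$ reads
\begin{equation*}
h_1(\rho(g)(v))=\chi(g)\cdot\left(f_1(v)\cdot\Phi(g)+h_1(v)\right)
\end{equation*}
for all $g\in G$ and $v\in V$. Taking $g\in G_v$ so that $\rho(g)(v)=v$ yields the single scalar identity
\begin{equation*}
h_1(v)=\chi(g)\cdot f_1(v)\cdot\Phi(g)+\chi(g)\cdot h_1(v),
\end{equation*}
which rearranges to
\begin{equation}
\label{eqn:vankey}
h_1(v)\cdot\bigl(1-\chi(g)\bigr)=\chi(g)\cdot f_1(v)\cdot\Phi(g).
\end{equation}
This single equation governs the whole equivalence. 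I would then analyze it in both directions, for each $g\in G_v$.

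To establish the equivalence of the two bulleted conditions, I would argue contrapositively using \eqref{eqn:vankey}. Assume the first condition: $h_1(v)=0$ or $G_v\subseteq\ker(\chi)$. If $h_1(v)=0$, then the left side of \eqref{eqn:vankey} is $0$; since $\chi(g)\neq0$ (it lands in $\Gm$), we get $f_1(v)\cdot\Phi(g)=0$ for all $g\in G_v$, i.e.\ $f_1(v)=0$ or $\Phi(g)=0$ for every $g$, giving $f_1(v)=0$ or $G_v\subseteq\ker(\Phi)$. If instead $G_v\subseteq\ker(\chi)$, then $\chi(g)=1$ makes the left side vanish and $\chi(g)=1$ on the right, again forcing $f_1(v)\cdot\Phi(g)=0$ for all $g\in G_v$, so the second condition holds. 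The converse runs symmetrically: assuming $f_1(v)=0$ or $G_v\subseteq\ker(\Phi)$ makes the right side of \eqref{eqn:vankey} vanish, so $h_1(v)\cdot(1-\chi(g))=0$ for all $g\in G_v$, giving $h_1(v)=0$ or $\chi(g)=1$ for every $g$, hence the first condition.

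The main obstacle — and the point requiring care — is the disjunctive (``or'') structure of each condition combined with the universal quantifier over $g\in G_v$. The statement $f_1(v)=0$ \emph{or} $G_v\subseteq\ker(\Phi)$ is not simply the pointwise negation of a product being zero; I must be careful that when $f_1(v)\neq0$, the vanishing $f_1(v)\Phi(g)=0$ forces $\Phi(g)=0$ \emph{for all} $g\in G_v$ simultaneously, which correctly yields $G_v\subseteq\ker(\Phi)$, whereas when $f_1(v)=0$ the second disjunct is satisfied outright and no constraint on $\Phi$ is needed. The logical bookkeeping must quantify over all $g\in G_v$ before splitting into cases, rather than fixing a single $g$ first; handling this quantifier order correctly is the only genuine subtlety in an otherwise direct computation.
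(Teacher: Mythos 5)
Your proof is correct and follows the paper's argument exactly: part (i) by evaluating \eqref{eqn:relinv} at a fixed point, and part (ii) by specializing the global equation \eqref{eqn:allgv} at $g\in G_v$ to obtain $h_1(v)(1-\chi(g))=\chi(g)f_1(v)\Phi(g)$ and then observing that one side vanishes for all $g\in G_v$ if and only if the other does. The only difference is that you spell out the final case analysis (and the quantifier order over $g$) that the paper leaves implicit.
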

\begin{proof}
Let $g\in G_v$.
For \eqref{enit:gvkerchi}, by 
\eqref{eqn:relinv} we have
$$
f_1(v)=f_1(\rho(g)(v))=\chi(g)f_1(v),
$$
and \eqref{enit:gvkerchi} follows.
For \eqref{enit:gvkerchiorphi}, by
Proposition \ref{prop:globaleqn} we have
$$
h_1(v)=h_1(\rho(g)(v))=\chi(g) \left(
  f_1(v) \Phi(g)+h_1(v) \right),
$$
or 
\begin{equation}
\label{eqn:h1vthingy}
h_1(v)\cdot \left(1-\chi(g)\right)
 = \chi(g)\cdot f_1(v) \cdot \Phi(g).
\end{equation}
%
Now consider the circumstances
when one side of \eqref{eqn:h1vthingy}
vanishes for all $g\in G_v$; the other side vanishes,
too.
\end{proof}

Lemma \ref{lemma:newvanishingthm} is best understood when
we consider generic points on the hypersurface components of
$\Omega^c$.
 
\begin{lemma}
\label{lemma:vanishingthm}
Let $f_1,\ldots,f_r$ be the basic relative invariants of $(G,\rho,V)$,
with $f_i\associatedmult \chi_i$.
Let $v_i\in V(f_i)$, with $v_i\notin V(f_j)$ for $i\neq j$.
\begin{enumerate}
\item
\label{enit:figv}
If $i\neq j$, then $G_{v_i}\subseteq \ker(\chi_j)$.
\item
\label{enit:vanishing2}
Suppose $h\associatedadd \Phi$, with $h$ nonzero and $h=\frac{h_1}{g_1}$ as in
Proposition \ref{prop:homogeneous}, and
$g_1=f_1^{k_1}\cdots f_r^{k_r}$ the factorization of $g_1$.
Then:
\begin{enumerate}
\item
\label{enit:gvichi}
If $k_i>0$ and $h_1(v_i)\neq 0$, then
$G^0_{v_i}\subseteq \ker(\chi_i)$.
\item
\label{enit:gviphi}
If $k_i=0$, then
$G_{v_i}\subseteq \ker(\Phi)$.
\end{enumerate}
\end{enumerate}
\end{lemma}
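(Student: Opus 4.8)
The plan is to reduce every assertion to the pointwise Lemma~\ref{lemma:newvanishingthm}, exploiting the hypothesis that $v_i$ lies on $V(f_i)$ but on no other $V(f_j)$, so that $f_j(v_i)\neq 0$ whenever $j\neq i$. Part~\eqref{enit:figv} is then immediate: for $i\neq j$ the polynomial $f_j$ is defined at $v_i$ with $f_j(v_i)\neq 0$, so Lemma~\ref{lemma:newvanishingthm}\eqref{enit:gvkerchi} applied to $f_j\associatedmult\chi_j$ yields $G_{v_i}\subseteq\ker(\chi_j)$.

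For Part~\eqref{enit:vanishing2} I would apply Lemma~\ref{lemma:newvanishingthm}\eqref{enit:gvkerchiorphi} to the additive relative invariant $h=\frac{h_1}{g_1}$, with $g_1$ playing the role of the denominator ``$f_1$'' there; by the group structure of Theorem~\ref{thm:pvs1} the denominator satisfies $g_1\associatedmult\chi$ with $\chi=\chi_1^{k_1}\cdots\chi_r^{k_r}$. The equivalence then reads: the first condition ``$h_1(v)=0$ or $G_v\subseteq\ker(\chi)$'' holds at a point $v$ if and only if the second condition ``$g_1(v)=0$ or $G_v\subseteq\ker(\Phi)$'' does. I evaluate this at $v=v_i$, where the decisive datum is $g_1(v_i)=\prod_j f_j(v_i)^{k_j}$: since $f_i(v_i)=0$ while $f_j(v_i)\neq 0$ for $j\neq i$, we have $g_1(v_i)=0$ exactly when $k_i>0$.

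In case~\eqref{enit:gviphi} ($k_i=0$) we have $g_1(v_i)\neq 0$ and $\chi=\prod_{j\neq i}\chi_j^{k_j}$, so Part~\eqref{enit:figv} forces $G_{v_i}\subseteq\ker(\chi)$; the first condition thus holds, hence so does the second, and since $g_1(v_i)\neq 0$ this gives $G_{v_i}\subseteq\ker(\Phi)$. In case~\eqref{enit:gvichi} ($k_i>0$ and $h_1(v_i)\neq 0$) we instead have $g_1(v_i)=0$, so the second condition holds for free; the first must then hold, and because $h_1(v_i)\neq 0$ we obtain $G_{v_i}\subseteq\ker(\chi)$.

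The one genuinely non-formal step is finishing case~\eqref{enit:gvichi}, i.e.\ passing from $G_{v_i}\subseteq\ker(\chi)$ to $G^0_{v_i}\subseteq\ker(\chi_i)$, and this is exactly where the identity component is forced upon us. Combining $G_{v_i}\subseteq\ker(\chi)$ with the vanishing of the $\chi_j$ for $j\neq i$ from Part~\eqref{enit:figv} shows only that $\chi_i^{k_i}\equiv 1$ on $G_{v_i}$, i.e.\ that $\chi_i(G_{v_i})$ lands in the finite group $\mu_{k_i}$ of $k_i$-th roots of unity. To eliminate $\chi_i$ itself I would restrict to the connected subgroup $G^0_{v_i}$: its image under $\chi_i$ is a connected subset of the finite set $\mu_{k_i}$, hence the single point $\chi_i(e)=1$, giving $G^0_{v_i}\subseteq\ker(\chi_i)$. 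This root-of-unity ambiguity is unavoidable, which is precisely why the conclusion of~\eqref{enit:gvichi} can be asserted only for the identity component.
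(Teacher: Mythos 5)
Your proposal is correct and follows essentially the same route as the paper: part (1) from Lemma \ref{lemma:newvanishingthm}\eqref{enit:gvkerchi}, and both cases of part (2) by evaluating the equivalence of Lemma \ref{lemma:newvanishingthm}\eqref{enit:gvkerchiorphi} at $v_i$ and using part (1) to strip off the characters $\chi_j$ with $j\neq i$. The only difference is that you spell out the final step of \eqref{enit:gvichi} — that $\chi_i(G^0_{v_i})$ is a connected subset of the $k_i$-th roots of unity, hence trivial — which the paper leaves implicit in the phrase ``and this implies \eqref{enit:gvichi}.''
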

\begin{proof}
\eqref{enit:figv} follows from Lemma
\ref{lemma:newvanishingthm}\eqref{enit:gvkerchi}.
For \eqref{enit:gvichi},
apply Lemma \ref{lemma:newvanishingthm}\eqref{enit:gvkerchiorphi}
to show that $G_{v_i}\subseteq \ker(\chi_1^{k_1}\cdots \chi_r^{k_r})$.
Then by \eqref{enit:figv}, $G_{v_i}\subseteq \ker(\chi_i^{k_i})$,
and this implies \eqref{enit:gvichi}.
In the situation of \eqref{enit:gviphi}, $g_1(v)\neq 0$ and
by \eqref{enit:figv} we have $G_{v_i}\subseteq \ker(\chi_1^{k_1}\cdots
\chi_r^{k_r})$.
Applying Lemma \ref{lemma:newvanishingthm}\eqref{enit:gvkerchiorphi}
then shows \eqref{enit:gviphi}.
\end{proof}

\begin{remark}
Since $h$ is written in lowest terms in
Lemma \ref{lemma:vanishingthm}\eqref{enit:vanishing2},
nearly any choice of $v_i$ in $V(f_i)$ has $h_1(v_i)\neq 0$.
Thus, if $v_i$ is generic and
$f_i$ appears nontrivially in the denominator of an additive relative
invariant, then
by Lemma \ref{lemma:vanishingthm}\eqref{enit:figv},
Lemma \ref{lemma:vanishingthm}\eqref{enit:gvichi},
and Theorem \ref{thm:pvs1},
the subgroup $G^0_{v_i}$ 
lies in the kernel of every $\chi\in X_1(G)$.
\end{remark}

\begin{remark}
\label{rem:fisquared}
%
%
In the situation of Lemma \ref{lemma:vanishingthm}\eqref{enit:gvichi},
the results of 
\cite[\S5]{pike-generalizebrion}
imply that
the ideal $I$ of Remark \ref{rem:omegac}
is contained in $(f_i)^2$.
In particular, if $I\nsubseteq (f_i)^2$ then there are
no additive relative invariants with $f_i$ appearing in the denominator.
\end{remark}

\begin{remark}
\label{rem:dimsorbits}
Lemma \ref{lemma:vanishingthm} says much about the
dimensions of the orbits of
certain algebraic subgroups $H$ of $G$.
In particular, if $G^0_v\subseteq H$ then
$H^0_v=G^0_v$
and it follows that  
$\dim(H\cdot v)
=\dim(G\cdot v)-\codim(H)$.
\end{remark}

\subsection{Algebraic independence}
\label{subsec:algind}

By Theorem \ref{thm:pvs1},
a set of basic relative
invariants are algebraically independent polynomials;
in fact, we may add to this set the numerators of
a basis of the additive relative invariants.

\begin{proposition}
\label{prop:algind}
Let $f_1,\ldots,f_r$ be a set of basic relative invariants, with
$f_i\associatedmult \chi_i$.
Let $\Phi_1,\ldots,\Phi_s$ be a basis of $\A_1(G)$,
and for $1\leq i\leq s$
let $\frac{h_i}{g_i}$ be an additive relative invariant written in
lowest terms with 
$\frac{h_i}{g_i}\associatedadd \Phi_i$.
Let $v_i\in V(f_i)$ with $v_i\notin V(f_j)$ for $i\neq j$.
Then:
\begin{enumerate}
\item
\label{enit:fiinvariance}
Each $f_i$ is invariant under the action of $[G,G]\cdot G_{v_0}$,
and $G_{v_j}$ for $j\neq i$.
\item
\label{enit:hiinvariance}
Each $h_i$ is invariant under the action of $[G,G]\cdot G_{v_0}$,
and $G_{v_j}$ for $j$ with $g_i(v_j)\neq 0$.
\item
\label{enit:algebraicindependence}
The polynomials 
$f_1,\ldots,f_r,h_1,\ldots,h_s\in \C[V]$ are algebraically independent
over $\C$.
\end{enumerate}
\end{proposition}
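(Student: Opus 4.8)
The plan is to clear parts (1) and (2) directly from the transformation laws and the vanishing lemma, and then to treat (3) — the algebraic independence — as the real content, via a dominant–map / transcendence–degree argument.

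For (1), since each $\chi_i\in X_1(G)$ factors through $H=G/[G,G]\cdot G_{v_0}$ by Proposition \ref{prop:x1is}, we have $[G,G]\cdot G_{v_0}\subseteq\ker(\chi_i)$, so by \eqref{eqn:relinv} each $f_i$ is fixed by $[G,G]\cdot G_{v_0}$; and for $j\neq i$, Lemma \ref{lemma:vanishingthm}\eqref{enit:figv} gives $G_{v_j}\subseteq\ker(\chi_i)$, whence $f_i$ is invariant under $G_{v_j}$. For (2), I would apply the global equation \eqref{eqn:allgv} to $\frac{h_i}{g_i}\associatedadd\Phi_i$, writing $g_i\associatedmult\psi_i$. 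For $g\in[G,G]\cdot G_{v_0}$ one has $\psi_i(g)=1$ as in (1) and $\Phi_i(g)=0$ (since by Proposition \ref{prop:a1} $\Phi_i$ factors through $H$, so $[G,G]\cdot G_{v_0}\subseteq\ker(\Phi_i)$); then \eqref{eqn:allgv} collapses to $h_i\circ\rho(g)=h_i$. For $j$ with $g_i(v_j)\neq 0$: since $f_j(v_j)=0$ while $f_l(v_j)\neq 0$ for $l\neq j$, the condition $g_i(v_j)\neq 0$ forces $f_j$ to occur with exponent $0$ in $g_i$, so Lemma \ref{lemma:vanishingthm}\eqref{enit:gviphi} gives $G_{v_j}\subseteq\ker(\Phi_i)$ and Lemma \ref{lemma:vanishingthm}\eqref{enit:figv} gives $\psi_i(g)=1$ for $g\in G_{v_j}$; again \eqref{eqn:allgv} yields $h_i\circ\rho(g)=h_i$.

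For (3), I would exploit the structure of $H$. Since $\chi_1,\ldots,\chi_r$ freely generate $X(H)$ (Theorem \ref{thm:pvs1}), $\Phi_1,\ldots,\Phi_s$ form a basis of $\A(H)\cong\A_1(G)$ (Proposition \ref{prop:a1}), and $H\cong\Gm^r\times\Ga^s$ with $r+s=\dim(H)$ (Proposition \ref{prop:ckcl}, Lemma \ref{lemma:rankdim}, Corollary \ref{cor:numcomponents}), the Jordan decomposition (characters kill the unipotent factor, additive functions kill the torus factor) shows that
$$(\chi_1,\ldots,\chi_r,\Phi_1,\ldots,\Phi_s): H \longrightarrow \Gm^r\times\Ga^s$$
is an isomorphism of varieties. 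Precomposing with the surjection $\pi:\Omega\to H$, $\rho(g)(v_0)\mapsto gG_1$, and using $\chi_i(g)=f_i(\rho(g)v_0)/f_i(v_0)$ and $\Phi_i(g)=\frac{h_i}{g_i}(\rho(g)v_0)-c_i$ with $c_i:=\frac{h_i}{g_i}(v_0)$, I find that the rational map $\Omega\to\C^{r+s}$ with components $f_i/f_i(v_0)$ and $\frac{h_i}{g_i}-c_i$ is dominant, so these $r+s$ functions are algebraically independent over $\C$. Finally, since each $g_i$ is a monomial in the $f_i$, the fields $\C(f_1,\ldots,f_r,h_1,\ldots,h_s)$ and $\C\big(f_i/f_i(v_0),\ \frac{h_i}{g_i}-c_i\big)$ coincide; the latter has transcendence degree $r+s$ and is generated by the $r+s$ elements $f_i,h_i$, so these generators are themselves algebraically independent.

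The main obstacle is (3), and within it the crucial point is that the additive functions $\Phi_j$ supply $s$ directions genuinely independent of those coming from the characters $\chi_i$ — equivalently, that $d\Phi_1,\ldots,d\Phi_s$ are linearly independent modulo the span of $d\chi_1,\ldots,d\chi_r$ in $L(H)^*$. This is exactly the content of the $\Gm^r\times\Ga^s$ decomposition of $H$ together with $r+s=\dim(H)$; one could equivalently verify it through the Jacobian criterion by computing $df_i$ and $dh_j$ at $v_0$ from \eqref{eqn:relinv} and \eqref{eqn:allgv} and checking that, after restriction along the surjection $\g\to V$, $X\mapsto X\cdot v_0$, they become the linearly independent functionals $f_i(v_0)\,d\chi_i$ and $g_i(v_0)\big(d\Phi_i+c_i\,d\psi_i\big)$.
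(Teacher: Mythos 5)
Your parts (1) and (2) are exactly the paper's argument (the paper simply cites \eqref{eqn:relinv}, Propositions \ref{prop:x1is}, \ref{prop:globaleqn}, \ref{prop:a1}, and Lemma \ref{lemma:vanishingthm}). For part (3) you take a genuinely different, and correct, route. The paper works infinitesimally: it invokes Jacobi's criterion, chooses $X_1,\ldots,X_r,Y_1,\ldots,Y_s\in\g$ dual to $d(\chi_i)_{(e)}$ and $d(\Phi_j)_{(e)}$ (possible precisely because $(\chi_1,\ldots,\chi_r,\Phi_1,\ldots,\Phi_s)(G)=(\Gm)^r\times(\Ga)^s$), and evaluates $df_1\wedge\cdots\wedge df_r\wedge dh_1\wedge\cdots\wedge dh_s$ on the corresponding vector fields $\xi_{X_i},\xi_{Y_j}$ at a point of $\Omega$ via \eqref{eqn:diffrelinv} and \eqref{eqn:diffaddinv}, obtaining a block-triangular determinant $\prod_i f_i(v)\cdot\prod_j g_j(v)\neq 0$. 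You instead use the same surjectivity globally: the map $\Omega\to\Gm^r\times\Ga^s$ with components $f_i/f_i(v_0)$ and $\tfrac{h_j}{g_j}-c_j$ factors through the isomorphism $H\cong\Gm^r\times\Ga^s$ and is therefore dominant, giving algebraic independence of $f_1,\ldots,f_r,\tfrac{h_1}{g_1},\ldots,\tfrac{h_s}{g_s}$; you then need the extra (correct) step that, because each $g_j$ is a monomial in the $f_i$, the fields $\C(f_\bullet,h_\bullet)$ and $\C(f_\bullet,h_\bullet/g_\bullet)$ coincide, so the $r+s$ generators $f_i,h_j$ of a field of transcendence degree $r+s$ are themselves independent. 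Both proofs rest on the single fact that the characters and additive functions jointly realize $H$ as $\Gm^r\times\Ga^s$; the paper's version avoids your final field-theoretic transfer and directly certifies independence of the polynomials $h_j$ rather than of the quotients, while yours avoids any differentiation and makes the geometric source of the independence (the surjection $\Omega\to H$) explicit. Your closing remark correctly identifies the paper's Jacobian computation as the infinitesimal shadow of your argument.
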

\begin{proof}
Claim \eqref{enit:fiinvariance} follows from \eqref{eqn:relinv},
Proposition \ref{prop:x1is}, and
Lemma \ref{lemma:vanishingthm}\eqref{enit:figv}.
Claim \eqref{enit:hiinvariance} follows from
Propositions \ref{prop:globaleqn} and
\ref{prop:a1}, and
Lemma \ref{lemma:vanishingthm}\eqref{enit:gviphi}.

A criterion of Jacobi (e.g., \cite[\S3.10]{humphreys-reflection_groups})
states that a set of $m$ polynomials in $\C[V]$, $m\leq \dim(V)$,
is algebraically independent over $\C$ if and only if
some $m\times m$ minor of their Jacobian matrix is not the zero
polynomial.
We shall equivalently show that
the wedge product of their exterior derivatives is not identically
zero.

Let $\g$ be the Lie algebra of $G$, and for $X\in\g$ let
$\xi_X$ be the vector field on $V$ defined by
$\xi_X(v)=\frac{d}{dt}\left(\rho(\exp(tX))(v)\right)|_{t=0}$ (see also
\S\ref{subsec:lfdsintro}).
If $f\associatedmult \chi$ and $X\in \g$, then differentiating
\eqref{eqn:relinv} shows
\begin{equation}
\label{eqn:diffrelinv}
(\xi_X(f))(v)=df_{(v)}(\xi_X(v))=d\chi_{(e)}(X)\cdot f(v).
\end{equation}
Similarly, if
$X\in\g$ and $g_i\associatedmult\chi'_i$ then differentiating \eqref{eqn:allgv} shows
\begin{equation}
\label{eqn:diffaddinv}
d(h_i)_{(v)}(\xi_X(v))=d(\chi'_i)_{(e)}(X)\cdot h_i(v)+g_i(v)\cdot
d(\Phi_i)_{(e)}(X).
\end{equation}
Since $(\chi_1,\ldots,\chi_r,\Phi_1,\ldots,\Phi_s)(G)=
(\Gm)^r\times(\Ga)^s$, choose
$X_1,\ldots,X_r,Y_1,\ldots,Y_s\in\g$ such that
$d(\chi_i)_{(e)}(X_j)=\delta_{ij}$,
$d(\chi_i)_{(e)}(Y_j)=0$,
$d(\Phi_i)_{(e)}(X_j)=0$,
and
$d(\Phi_i)_{(e)}(Y_j)=\delta_{ij}$,
where $\delta_{ij}$ denotes the Kronecker delta function.
Because each $\chi'_i$ is some product of the $\chi_j$,
we have $d(\chi'_i)_{(e)}(Y_j)=0$.
%
Then by \eqref{eqn:diffrelinv} and \eqref{eqn:diffaddinv},
evaluating $df_1\wedge \cdots \wedge df_r\wedge dh_1\wedge\cdots\wedge
dh_s$ at $(\xi_{X_1},\cdots,\xi_{X_r},\xi_{Y_1},\cdots,\xi_{Y_s})(v)$
for any $v\in\Omega$
equals
%
\begin{equation*}
\begin{vmatrix}
\begin{smallmatrix}
f_1(v) & & \text{\large{0}} \\
 & \ddots & \\
\text{\large{0}} & & f_r(v)
\end{smallmatrix}
&
\begin{smallmatrix}
 \text{\Huge{0}} 
\end{smallmatrix}
\\
\begin{smallmatrix}
\phantom{g_1(v)} \\
\text{\Huge{*}} \\
\phantom{0}
\end{smallmatrix}
&
\begin{smallmatrix}
g_1(v) & & \text{\large{0}} \\
 & \ddots & \\
\text{\large{0}} & & g_r(v)
\end{smallmatrix}
\end{vmatrix}
\neq 0.\qedhere
\end{equation*}
\end{proof}

\begin{remark}
By Remark \ref{rem:dimsorbits} the dimension of the
$[G,G]\cdot G_{v_0}$--orbit of any $v\in\Omega$ 
is
$\dim(G\cdot v)-\codim([G,G]\cdot G_{v_0})
=\dim(V)-(r+s)$.
(Note that in $\Omega$ the orbits of
$[G,G]$ and $[G,G]\cdot G_{v_0}$ agree.)
Although 
the level set of
$(f_1,\ldots,f_r,h_1,\ldots,h_s)$
containing $v$
and the orbit of such a $v$ will
thus
agree locally,
it is doubtful that 
$f_1,\ldots,f_r,h_1,\ldots,h_s$ always generate the
subring of $\C[V]$ of all $[G,G]\cdot G_{v_0}$--invariant
polynomials,
as invariant subrings are not always
finitely generated.
\end{remark}

  \subsection{Decomposition}
\label{subsec:decomp}
For prehomogeneous vector spaces, any nontrivial
factor of a relative invariant is itself a relative invariant;
a crucial fact used in the proof is the unique factorization of a
rational function on $V$.

An analogous statement for additive relative
invariants
might be that any additive relative invariant may be expressed as a
sum of additive relative invariants with `simpler' denominators.
Example \ref{ex:denompowers} shows that these denominators may be
powers of basic relative invariants, and hence we conjecture:

\begin{conjecture}
\label{conj:canbreakup}
There exists a basis $\Phi_1,\ldots,\Phi_s$ for $\A_1(G)$ such that
if we choose reduced rational functions $h_i$ with 
$h_i\associatedadd\Phi_i$,
then for each $i$ the denominator of $h_i$ is a positive power of some
basic relative invariant.
\end{conjecture}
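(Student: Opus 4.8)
The plan is to reduce the conjecture to a purely local-to-global splitting statement, to show that the group action makes the splitting automatic once one has it, and to isolate the genuine difficulty as a question about multivariate partial fractions. First I would pass from additive functions to additive relative invariants: by Propositions \ref{prop:a1} and \ref{prop:basicproperties}, the assignment $h\mapsto\Phi$ (where $h\associatedadd\Phi$) induces a $\C$-linear isomorphism from the space of additive relative invariants modulo constants onto $\A_1(G)$. For each basic relative invariant $f_i$ let $W_i$ be the subspace of classes admitting a representative of the form $\frac{a}{f_i^k}$. This really is a subspace: a sum of two additive relative invariants with denominators $f_i^a$ and $f_i^b$ is again an additive relative invariant by Proposition \ref{prop:whichrationalfunctions}, and its reduced denominator divides $f_i^{\max(a,b)}$, hence is a power of $f_i$. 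The conjecture is then equivalent to the assertion $\A_1(G)=\sum_i W_i$, since from such a decomposition one extracts a basis lying in $\bigcup_i W_i$, and by Proposition \ref{prop:homogeneous} the reduced representative of a class in $W_i$ has denominator a power of $f_i$ (cancellation only deletes factors of $f_i$). Thus it suffices to prove that every additive relative invariant is a sum of additive relative invariants each having denominator a power of a single basic relative invariant.

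The second step, which goes through unconditionally, is to show that once such a pure-denominator decomposition exists, each summand is automatically an additive relative invariant. Suppose $h\associatedadd\Phi$ with $h=c+\sum_j\frac{a_j}{f_j^{k_j}}$, where $c\in\C$, $\gcd(a_j,f_j)=1$, and each $\frac{a_j}{f_j^{k_j}}$ is homogeneous of degree $0$. For $g\in G$ set $D_j(g)=\frac{a_j}{f_j^{k_j}}\circ\rho(g)-\frac{a_j}{f_j^{k_j}}$. Since $f_j\circ\rho(g)=\chi_j(g)\,f_j$ by \eqref{eqn:relinv}, the function $D_j(g)$ has poles only along the irreducible hypersurface $V(f_j)$, so $D_j(g)=\frac{b_j}{f_j^{k_j}}$ for a polynomial $b_j$. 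Now $\sum_j D_j(g)=h\circ\rho(g)-h=\Phi(g)$ is constant; clearing denominators and reducing modulo $f_j^{k_j}$, and using that $\prod_{l\neq j} f_l^{k_l}$ is coprime to $f_j$, forces $f_j^{k_j}\mid b_j$. Hence each $D_j(g)$ is a polynomial, and being homogeneous of degree $0$ it is a constant, so $\frac{a_j}{f_j^{k_j}}$ satisfies the criterion of Proposition \ref{prop:whichrationalfunctions} and is an additive relative invariant. The essential point here is that distinct basic relative invariants cut out distinct irreducible hypersurfaces, each preserved by the $G$-action.

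The hard part — and the reason this stays a conjecture — is actually producing the decomposition. Writing $h=\frac{h_1}{\prod_i f_i^{k_i}}$ and clearing denominators, one must solve $h_1=c\prod_i f_i^{k_i}+\sum_i a_i\prod_{l\neq i} f_l^{k_l}$, i.e.\ $h_1$ must lie in the ideal generated by $\prod_i f_i^{k_i}$ and the $\prod_{l\neq i} f_l^{k_l}$. Unlike the one-variable situation there is no B\'ezout identity among the pairwise-coprime $f_i$ in the several-variable ring $\C[V]$, and all of these generators vanish on each intersection $V(f_i)\cap V(f_j)$ with $i\neq j$; so membership can fail unless $h_1$ vanishes suitably on those codimension-two loci. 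I would therefore attempt the decomposition by induction on the size of the support $\{i:k_i>0\}$, peeling off at each stage the principal part of $h$ along a generic point of some $V(f_i)$, and the crux would be to use the $G$-action — through the vanishing statements of \S\ref{subsec:vanishing} and the constraint of Remark \ref{rem:fisquared} — to force $h_1$ to vanish to the order required along each $V(f_i)\cap V(f_j)$, thereby killing the ideal-membership obstruction. Absent such a vanishing statement, the conjecture can be proved only under the hypothesis that $h$ is presented as a sum of appropriate fractions, which is exactly the partial result secured by the second step.
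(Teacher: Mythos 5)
You are asked to prove a statement that the paper itself labels a conjecture and does not prove; your write-up correctly refrains from claiming a complete proof, so there is no overreach. What you do establish is sound and matches the paper's partial result. Your reduction of the conjecture to the identity $\A_1(G)=\sum_i W_i$ is correct, and your second step is a valid proof that whenever an additive relative invariant is presented as $c+\sum_j a_j/f_j^{k_j}$ with pure-power, pairwise-coprime denominators, each summand is itself an additive relative invariant. This is the content of the paper's Proposition \ref{prop:canbreakup}, but you reach it by a genuinely different and somewhat more general route: the paper treats only the two-term splitting $\frac{h_1}{f_1f_2}=\frac{\alpha}{f_1}+\frac{\beta}{f_2}$, rearranging the global equation \eqref{eqn:allgv} and invoking the divisibility criterion of Proposition \ref{prop:funsagreeon}, whereas you handle arbitrarily many terms at once by localizing the difference $h\circ\rho(g)-h$ along each $V(f_j)$, using coprimality of the $f_l$ to force $f_j^{k_j}\mid b_j$, and then using degree-$0$ homogeneity to conclude that each $D_j(g)$ is constant, which feeds directly into Proposition \ref{prop:whichrationalfunctions}. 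Both arguments are correct; yours buys the $r$-term statement in one pass.

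The genuine gap is the one you name: actually producing the decomposition, i.e., the membership of $h_1$ in the ideal of $\C[V]$ generated by the products $\prod_{l\neq i}f_l^{k_l}$, where no multivariate B\'ezout identity among the pairwise-coprime $f_i$ is available. This is exactly the obstruction the paper identifies (the failure of partial fraction decompositions in several variables), and it is resolved neither by you nor by the paper. Your proposed induction on the support, using the vanishing statements of \S\ref{subsec:vanishing} and Remark \ref{rem:fisquared} to force the needed vanishing of $h_1$ along the codimension-two loci $V(f_i)\cap V(f_j)$, is a plausible strategy but is only sketched, so the conjecture remains open in your treatment just as it does in the paper.
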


It would be natural to use a partial fraction expansion to prove
this,
but
such an expansion does not always
exist
for rational functions of several variables.
The question of the existence of a partial fraction decomposition
seems to be the main obstacle in proving this conjecture.

\begin{proposition}
\label{prop:canbreakup}
Let $\frac{h_1}{f_1f_2}$ be an additive relative invariant written in
lowest terms, with $f_1$ and $f_2$ polynomial relative invariants
having no common factors.
If
$$\frac{h_1}{f_1f_2}=\frac{\alpha}{f_1}+\frac{\beta}{f_2}$$
for $\alpha,\beta$ homogeneous polynomials of the same degree as
$f_1$ and $f_2$, respectively, then
$\frac{\alpha}{f_1}$ and $\frac{\beta}{f_2}$ are
additive relative invariants written in lowest terms.
\end{proposition}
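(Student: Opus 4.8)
I need to prove Proposition~\ref{prop:canbreakup}: given an additive relative invariant $\frac{h_1}{f_1f_2}$ (in lowest terms) that happens to split as $\frac{\alpha}{f_1}+\frac{\beta}{f_2}$ with $\alpha,\beta$ homogeneous of the appropriate degrees, I must show each summand is itself an additive relative invariant, written in lowest terms.

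Let me think about the structure. The hypothesis says $\frac{h_1}{f_1 f_2} \associatedadd \Phi$ for some $\Phi$. I have $h_1 = \alpha f_2 + \beta f_1$. The $f_i$ are relative invariants, say $f_i \associatedmult \chi_i$. The goal pieces are $\frac{\alpha}{f_1}$ and $\frac{\beta}{f_2}$.

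**The key tool.** Proposition~\ref{prop:whichrationalfunctions} characterizes additive relative invariants: a rational function holomorphic on $\Omega$ is an additive relative invariant iff $k(\rho(g)(v)) - k(v)$ is independent of $v \in \Omega$ for each $g$. So I want to show $\frac{\alpha}{f_1}(\rho(g)(v)) - \frac{\alpha}{f_1}(v)$ is independent of $v$.

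**Strategy: use $G$-action on the splitting and uniqueness of denominators.**

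Here's my plan. Fix $g \in G$. Apply $g$ to the whole equation $\frac{h_1}{f_1 f_2} = \frac{\alpha}{f_1} + \frac{\beta}{f_2}$, meaning precompose with $\rho(g)$. The left side, being an additive relative invariant, transforms by adding the constant $\Phi(g)$. For the right side, I need to understand how $\frac{\alpha}{f_1} \circ \rho(g)$ behaves.

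Since $f_1 \associatedmult \chi_1$, we have $f_1(\rho(g)(v)) = \chi_1(g) f_1(v)$. Let me write $\alpha_g(v) := \alpha(\rho(g)(v))$, a homogeneous polynomial of the same degree as $\alpha$. Then
$$\frac{\alpha}{f_1}\circ\rho(g) = \frac{\alpha_g}{\chi_1(g) f_1}.$$
Similarly for the $\beta/f_2$ term. So applying $g$ to the splitting gives
$$\Phi(g) + \frac{\alpha}{f_1} + \frac{\beta}{f_2} = \frac{\alpha_g}{\chi_1(g)f_1} + \frac{\beta_g}{\chi_2(g)f_2}.$$
Rearranging, I get an identity of rational functions:
$$\left(\frac{\alpha_g}{\chi_1(g)f_1} - \frac{\alpha}{f_1}\right) + \left(\frac{\beta_g}{\chi_2(g)f_2} - \frac{\beta}{f_2}\right) = \Phi(g).$$

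Now clear denominators over $f_1$: the first parenthesis is $\frac{\alpha_g - \chi_1(g)\alpha}{\chi_1(g)f_1}$, a fraction with denominator (dividing) $f_1$; the second has denominator $f_2$; and $f_1, f_2$ share no common factor. The right side $\Phi(g)$ is a constant.

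**The main obstacle — and how to handle it.** The crux is a uniqueness argument: if $\frac{A}{f_1} + \frac{B}{f_2} = c$ (a constant) with $\gcd(f_1,f_2)=1$, then because $\C[V]$ is a UFD and the fraction field has unique partial-fraction-type behavior over coprime denominators, each fraction must individually be a polynomial — in fact forced to be constant by the degree/homogeneity constraints. Concretely, multiply through by $f_1 f_2$ to get $A f_2 + B f_1 = c\, f_1 f_2$, whence $f_1 \mid A f_2$, and coprimality forces $f_1 \mid A$, so $\frac{A}{f_1}$ is a polynomial; by homogeneity of degree $0$ it is a constant. I expect verifying this cleanly, while tracking that $\alpha_g - \chi_1(g)\alpha$ really is divisible by $f_1$ (equivalently that $\frac{\alpha}{f_1}$ stays in lowest terms or that the constant is the right one), to be the delicate point.

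**Finishing.** Once I know $\frac{\alpha_g}{\chi_1(g)f_1} - \frac{\alpha}{f_1}$ is a constant (depending on $g$ but not on $v$), Proposition~\ref{prop:whichrationalfunctions} immediately gives that $\frac{\alpha}{f_1}$ is an additive relative invariant; the same argument applies to $\frac{\beta}{f_2}$. For the "lowest terms" claim: since the original $\frac{h_1}{f_1f_2}$ is in lowest terms, $f_1 \nmid h_1 = \alpha f_2 + \beta f_1$, forcing $f_1 \nmid \alpha f_2$, and by coprimality $f_1 \nmid \alpha$; so $\frac{\alpha}{f_1}$ is already reduced (its numerator and denominator being homogeneous of equal degree with no common factor). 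I will also note as a degenerate case that if $\alpha$ (or $\beta$) vanishes the corresponding summand is the zero additive relative invariant, which is fine.
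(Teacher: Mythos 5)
Your argument is correct in substance and follows essentially the same route as the paper: both reduce to showing that $f_1$ divides $\alpha\circ\rho(g)-\chi_1(g)\,\alpha$ by clearing denominators in the $g$--transformed identity and invoking the coprimality of $f_1$ and $f_2$, and your observation that the resulting quotient is a homogeneous polynomial of degree $0$, hence a constant, is exactly the content the paper delegates to Proposition~\ref{prop:funsagreeon}. The one small imprecision is in the lowest-terms claim: you deduce only $f_1\nmid\alpha$, which is weaker than $\gcd(\alpha,f_1)=1$ when $f_1$ is reducible (and here $f_1,f_2$ are arbitrary polynomial relative invariants, not assumed irreducible); the paper's version of the argument --- any common factor of $\alpha$ and $f_1$ would, via $h_1=\alpha f_2+\beta f_1$, be a common factor of $h_1$ and $f_1f_2$ --- closes this gap immediately.
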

\begin{proof}
The statement is true when $h_1=0$ and $f_1,f_2$ are constants, so
assume $h_1$ is nonzero.

Let $f_i\associatedmult\chi_i$ and $\frac{h_1}{f_1f_2}\associatedadd
\Phi$.
By algebra we have
\begin{equation}
\label{eqn:zstarstarstar}
h_1=\alpha\cdot f_2+\beta\cdot f_1
\end{equation}
and so by Proposition \ref{prop:globaleqn} we have for all $(g,v)\in
G\times V$,
\begin{multline}
\label{eqn:somehorrible}
\alpha(\rho(g)(v))\cdot f_2(\rho(g)(v))+\beta(\rho(g)(v))\cdot
f_1(\rho(g)(v)) \\
=\chi_1(g)\chi_2(g)\left(
  f_1(v) f_2(v) \Phi(g)+\alpha(v)f_2(v)+\beta(v)f_1(v)\right).
\end{multline}
Rearranging \eqref{eqn:somehorrible} and using \eqref{eqn:relinv}
gives
\begin{multline}
\label{eqn:somehorrible2}
\left(\alpha(\rho(g)(v))-\chi_1(g)\alpha(v)\right)\cdot \chi_2(g) f_2(v) \\
=\chi_1(g)f_1(v) \left(\chi_2(g) f_2(v)
\Phi(g)+\chi_2(g)\beta(v)-\beta(\rho(g)(v))\right).
\end{multline}
For each 
$g\in G$, the left side of \eqref{eqn:somehorrible2}
is divisible by $f_1$, and since $f_1$ and $f_2$ have no common
factors,
$\alpha\circ \rho(g)-\chi_1(g)\cdot \alpha$ is divisible by $f_1$.
By Proposition \ref{prop:funsagreeon},
$\frac{\alpha}{f_1}$ is an additive relative invariant,
and thus so
is
$\frac{\beta}{f_2}$.
If $\frac{\alpha}{f_1}$ or $\frac{\beta}{f_2}$ is not in lowest terms, then by
\eqref{eqn:zstarstarstar} neither is $\frac{h_1}{f_1f_2}$.
\end{proof}
\begin{remark}
If in Proposition \ref{prop:canbreakup} we omit the assumption that
$\alpha$ and $\beta$ are homogeneous, then we may take homogeneous
parts of \eqref{eqn:zstarstarstar}
to find 
an $\alpha'$ and $\beta'$ that are homogeneous of the correct degree.
\end{remark}

\section{Examples}
\label{sec:exs}
We now give some examples of additive relative invariants.
Our first two examples show there may be arbitrary numbers of
linearly independent additive relative invariants associated to a
single hypersurface, and arbitrary numbers of hypersurfaces.

\begin{example}
\label{ex:2addinvs}
Let
$
G=\left\{ \left(\begin{smallmatrix}
a & 0 & 0\\
b & a & 0 \\
c & 0 & a
\end{smallmatrix}\right)\in\GL_3(\C)\right\}$
act on $\C\{x,y,z\}=\C^3$ by multiplication, where this notation means
that $a,b,c$ may take any value that gives an invertible matrix.
Then $G$ has $D=V(x)$ as the exceptional orbit variety,
with $f_1=x\associatedmult \chi_1=a$.
Since $G$ is abelian and the isotropy subgroup at
$(x,y,z)=(1,0,0)\in\Omega $ is trivial,
$G$ has $2$ linearly independent additive functions,
namely, 
$h_1=\frac{y}{x}\associatedadd \Phi_1=\frac{b}{a}$
and
$h_2=\frac{z}{x}\associatedadd \Phi_2=\frac{c}{a}$.
On $V(x)$, radial subsets of the form $\alpha y+\beta z=0$
are invariant, and the $G$--action multiplies both coordinates by
$\chi_1=a$ as expected by Proposition \ref{prop:funsagreeon}.

More generally, fix coordinates $x_1,\ldots,x_n$ on $\C^n$ and let
$G\subseteq \GL(\C^n)$
consist of those matrices with all
diagonals equal, the first column unrestricted, and all other entries
equal to zero.
Then $G$ is abelian and has
$f_1=x_1$ and
$n-1$ linearly independent additive functions corresponding to
the additive relative invariants
$h_i=\frac{x_{i+1}}{x_1}$, $i=1,\ldots,n-1$.
\end{example}

\begin{example}
\label{ex:A}
Let
$
G=\left\{ \left(\begin{smallmatrix}
a & 0 & 0\\
b & a & 0 \\
0 & 0 & c
\end{smallmatrix}\right)\in\GL_3(\C)\right\}$
act on $\C\{x,y,z\}=\C^3$ by multiplication.
Then $G$ has 
$f_1=x\associatedmult \chi_1=a$
and
$f_2=z\associatedmult \chi_2=c$.
Since $G$ is abelian and the isotropy subgroup at
$(x,y,z)=(1,0,1)\in\Omega$ is trivial,
$\dim(\A_1(G))=1$;
a generator is
$h_1=\frac{y}{x}\associatedadd \Phi_1=\frac{b}{a}$.
On $V(x)$, $g\in G$ sends $(0,y,z)^T$ to $(0,\chi_1(g)
y,\chi_2(g)z)^T$; hence,
both
$V(x,y)$ and $V(x,z)$ are 
$G$--invariant, but only the former has the behavior predicted by
Proposition \ref{prop:funsagreeon}.
Both $V(x)$ and $V(z)$ contain open orbits.

More generally, we may take direct products of groups of the type in
Example \ref{ex:2addinvs}, realized as block diagonal matrices.
For $n\in\N$ and $k_1,\ldots,k_n\in \Z_{\geq 0}$,
this construction can produce a prehomogeneous vector space with
$\Omega^c$ consisting of
$n$ hyperplanes $H_1,\ldots,H_n$,
such that for each $i$ there are $k_i$ linearly independent additive
relative invariants having 
poles on $H_i$.
\end{example}

Next, we observe that the existence of an additive relative invariant
does not depend only on the orbit structure of the action. 

\begin{example}
Let
$
G=\left\{ \left(\begin{smallmatrix}
a & 0 & 0\\
b & \frac{a}{c} & 0 \\
0 & 0 & c
\end{smallmatrix}\right)\in\GL_3(\C)\right\}$
act on $\C\{x,y,z\}=\C^3$ by multiplication.
Then $G$ has 
$f_1=x\associatedmult \chi_1=a$
and
$f_2=z\associatedmult \chi_2=c$.
The generic isotropy subgroup is trivial and $\dim([G,G])=1$.
Thus $G/[G,G]\cdot G_{v_0}$ has no nontrivial additive functions, 
and there are no nontrivial additive relative invariants.
The orbit structure of $G$ agrees with that of the group in
Example \ref{ex:A},
but here neither $V(x,y)$ nor $V(x,z)$ exhibit the behavior
described in Proposition \ref{prop:funsagreeon}.
\end{example}

An additive relative invariant may unavoidably have a nontrivial power
of a basic relative invariant in its denominator.

\begin{example}
\label{ex:denompowers}
For $n\geq 2$, let $G\subseteq \GL_n(\C)$
consist of all invertible lower-triangular matrices $A$
such that each $(A)_{ij}$ depends only on $i-j$, that is, 
$(A)_{i+1,j+1}=(A)_{i,j}$ whenever this makes sense.
Then $G$ is a connected abelian linear algebraic group, and its
action on $\C^n=\C\{x_1,\ldots,x_n\}$ has an open orbit with
exceptional orbit variety
$V(x_1)$ and a trivial generic isotropy subgroup. 
It follows that $G$ has $n-1$ linearly independent additive functions.
For $1\leq i\leq n-1$ define $\Phi_i$ by
\begin{equation*}
\Phi_i\left(
 \begin{pmatrix}
a_1 & 0   & 0   & \cdots & 0 
\\
a_2 & a_1 & 0   & \ddots & \vdots
\\
a_3 & a_2 & a_1 & \ddots & \vdots
\\
\vdots & \ddots & \ddots & \ddots & \vdots
\\
a_n & a_{n-1} & a_{n-2} & \cdots & a_1
 \end{pmatrix}
\right)=
\frac{1}{(a_1)^i}
\begin{vmatrix}
\frac{1}{i}a_2 & a_1 & 0   & \cdots & 0 
\\
\frac{2}{i}a_3 & a_2 & a_1   & \ddots & \vdots
\\
\frac{3}{i}a_4 & a_3 & a_2 & \ddots & \vdots
\\
\vdots & \ddots & \ddots & \ddots & \vdots
\\
\frac{i}{i}a_{i+1} & a_{i} & \cdots & \cdots & a_2
\end{vmatrix}.
\end{equation*}
For instance, when $n=4$ we have
\begin{align*}
\Phi_1&=\frac{a_2}{a_1}, & 
\Phi_2&=\frac{1}{a_1^2}\left(\frac{1}{2} a_2^2 -a_1 a_3\right),
&
\text{and }
\Phi_3&=\frac{1}{a_1^3}\left(
 \frac{1}{3}(a_2)^3-a_1a_2a_3+a_1^2 a_4 \right).
\end{align*}
For at least $n\leq 20$, computer calculations show that
each $\Phi_i:G\to \Ga$ is a homomorphism, and hence
$\Phi_1,\ldots,\Phi_{n-1}$ is a basis of the space of additive
functions.
By \eqref{eqn:star},
the corresponding additive relative invariants are
of a form similar to $\Phi_i$ (e.g., substitute $x_i$ for $a_i$),
and for $i>1$ have nonreduced denominators.
%
%
%
%
\end{example}

\begin{example}
Fix a positive integer $n$ and let $m\in \{n,n+1\}$.
Let $L\subseteq \GL_n(\C)$ (respectively, $U\subseteq \GL_m(\C)$)
consist of invertible lower triangular matrices
(resp., upper triangular unipotent matrices).
Let $G=L\times U$ act on the space $M(n,m,\C)$ of $n\times m$ complex
matrices by $(A,B)\cdot M=AMB^{-1}$.
The classical \emph{LU factorization} of a complex
matrix asserts that this is a
prehomogeneous vector space (see \cite{DP-matrixsingI}).
For any matrix $M$, let $M^{(k)}$ denote the upper-leftmost $k\times
k$ submatrix of $M$.
By \cite[\S6]{DP-matrixsingI},
the basic relative invariants are of the form
$f_i(M)=\det(M^{(i)})$, $i=1,\ldots,n$,
and
$f_i\associatedmult \chi_i(A,B)=\det(A^{(i)})$.
%
For $I$ the identity matrix, $v_0=I$ when $m=n$, or
$v_0=\begin{pmatrix} I & 0 \end{pmatrix}$ when $m=n+1$,
is in $\Omega$ 
and has a trivial isotropy subgroup $G_{v_0}$.

Since $\dim(G/[G,G]\cdot G_{v_0})=n+m-1$, the quotient has
$m-1$
linearly independent additive functions.
In fact, for $(A,B)\in G$ and $1\leq i\leq m-1$, let
$\Phi_i(A,B)=-(B)_{i,i+1}$.
A computation shows that these $\Phi_i$ are linearly independent
additive
functions of $G$,
with
$$h_i(M)=\frac{\det((\textrm{$M$ with column $i$
deleted})^{(i)})}{\det(M^{(i)})}\associatedadd \Phi_i.$$
\end{example}

\section{Linear free divisors}
\label{sec:lfds}
We now consider prehomogeneous vector spaces
$(G,\rho,V)$ for which the complement of the open orbit
$\Omega$ is a type of hypersurface called a linear free divisor.
Our main theorem is that these have no nontrivial additive relative
invariants, but this has significant consequences for their structure.

  \subsection{Introduction}
\label{subsec:lfdsintro}
Let $\Ocnp$ denote the ring of germs of holomorphic functions on
$\C^n$ at $p$,
and $\Der_{\C^n,p}$ the $\Ocnp$--module of germs of holomorphic vector
fields on $\C^n$ at $p$.
Associated to a germ $(D,p)$ of a reduced analytic set in $\C^n$ is the
$\Ocnp$--module of \emph{logarithmic vector fields} defined by
$$
\Derlog{\C^n,p}{D}:=\{\eta\in \Der_{\C^n,p}: \eta(I(D))\subseteq I(D)\},$$
where $I(D)$ is the ideal of germs vanishing on $D$.
These are the vector fields tangent to $(D,p)$, and form
a Lie algebra using the Lie bracket of
vector fields.

Let $D$ be nonempty and $(D,p)\neq(\C^n,p)$.
When $\Derlog{\C^n,p}{D}$ is a free $\Ocnp$--module, necessarily of rank $n$,
then $(D,p)$ is called a \emph{free divisor}.
Free divisors are always pure hypersurface germs that are either smooth or
have singularities in codimension $1$,
and were first encountered as various types of discriminants.

Now let $(G,\rho,V)$ be a prehomogeneous vector space, let
$\g$ be the Lie algebra of $G$, and let $D:=V\setminus \Omega$
be the exceptional orbit variety.
Let $\Derlog{V}{D}_0$ denote the $\eta\in\Derlog{V}{D}$ that are
\emph{linear}, that is,
homogeneous of degree $0$ (e.g.,
$(x+y)\frac{\partial}{\partial x}-2z\frac{\partial}{\partial y}$).
Because $D$ is $G$--invariant, differentiating the action of $G$ gives a Lie algebra
\mbox{(anti-)}homomorphism
$\tau:\g\to\Derlog{V}{D}_0$, where
$\xi_X:=\tau(X)$ is a vector field defined globally on $V$ by
$$\xi_X(v)=\tau(X)(v)=\frac{d}{dt}\!\left.\big(\rho(\exp(tX))(v)\big)\right|_{t=0}.$$
Thus, $\tau(\g)\subseteq \Derlog{V}{D}_0$ are
finite-dimensional Lie subalgebras of the module $\Derlog{V}{D}$. 
As $\tau(\g)(v)=T_v(G\cdot v)$, the maximal minors of a matrix
containing 
the coefficients of a basis of $\tau(\g)$ are of degree $\dim(V)$ and
generate an ideal defining the set $D$.

When $\Derlog{V}{D}$ has a free basis of linear vector fields,
then $D$ is called a \emph{linear free divisor}.
By
\cite[Lemma 2.3]{gmns},
every linear free divisor $D$ is the exceptional orbit variety of
a prehomogeneous vector space
with the following properties:
\begin{definition}
\label{def:defineslfd}
Let $D$ be a linear free divisor in $V$.
If $(G,\rho,V)$ is a prehomogeneous vector space with
open orbit $V\setminus D$, 
$G$ connected,
and $\dim(G)=\dim(V)$,
then say that
\emph{$G$ defines the linear free divisor $D$}.
\end{definition}
It follows that
$\ker(\rho)$ is finite, and
$\tau(\g)=\Derlog{V}{D}_0$.

One such prehomogeneous vector space 
may be constructed in the following way. 
For a divisor $D\subset V$ in a finite-dimensional complex vector
space,
let $\GL(V)_D$ be the largest subgroup
of $\GL(V)$ that preserves $D$.
Note that $\GL(V)_D$ is algebraic.
Then $(\GL(V)_D)^0$ is a connected complex linear algebraic group
with Lie algebra (anti-)isomorphic to 
$\Derlog{V}{D}_0$.
For a linear free divisor $D\subset V$, the group
$(\GL(V)_D)^0\subseteq \GL(V)$
with the inclusion representation is a prehomogeneous vector space that
defines $D$
(\cite[Lemma 2.3]{gmns}).
In fact, 
if $(G,\rho,V)$ defines a
linear free divisor $D$, then $\rho(G)=(\GL(V)_D)^0$.

  \subsection{Brion's criterion}
\label{subsec:brion}

Brion gave the following useful criterion for $D$ to be a linear free
divisor.
\begin{theorem}[\cite{brion}, {\cite[Theorem 2.1]{freedivisorsinpvs}};
see also \cite{pike-generalizebrion}]
\label{thm:brion}
For $G=(\GL(V)_D)^0$,
the following are equivalent:
\begin{enumerate}
\item
\label{cond:dlfd}
$D$ is a linear free divisor.
\item
\label{cond:bothopenisotrpy}
Both of these conditions hold:
\begin{enumerate}
 \item
\label{cond:vmdopen}
$V\setminus D$ is a unique $G$-orbit, and the corresponding
isotropy groups are finite.
 \item
\label{cond:isotropyrep}
Each irreducible component $D_i$ of $D$ contains an open
$G$-orbit $D_i^0$, and the corresponding isotropy groups are
extensions of finite groups by $\Gm$.
\end{enumerate}
\end{enumerate}
When these hold,
$\tau(\g)$ 
generates $\Derlog{V}{D}$, and each $D_i^0=\smooth(D)\cap D_i$.
\end{theorem}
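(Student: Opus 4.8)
The plan is to run everything through Saito's criterion for free divisors. Fix a reduced equation $f=\prod_i f_i$ of $D$ with the $f_i$ irreducible, and a basis $X_1,\dots,X_n$ of $\g$ (recall $\dim\g=\dim\Derlog{V}{D}_0$, which equals $n$ in either situation: by \eqref{cond:vmdopen} the open orbit with finite isotropy forces $\dim G=n$, while a linear free divisor has $\dim\Derlog{V}{D}_0=n$ for degree reasons). Let $M(v)$ be the Saito matrix whose $j$-th column is $\xi_{X_j}(v)$, and set $\Delta=\det M$. Since $\ker(\rho)$ is finite, $\tau$ is injective and the columns of $M(v)$ span $\tau(\g)(v)=T_v(G\cdot v)$, so the corank of $M(v)$ equals $\dim G_v$; in particular $\Delta$ vanishes exactly on $D$ and $\Delta=u\prod_i f_i^{m_i}$ with $m_i\ge 1$. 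By Saito's criterion the linear fields $\xi_{X_j}$ form an $\mathscr{O}_V$-basis of $\Derlog{V}{D}$ --- equivalently, $D$ is a linear free divisor --- if and only if $\Delta$ is reduced, i.e. every $m_i=1$. Thus the whole theorem reduces to reading off the numbers $m_i$ from the isotropy data.

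The first, easy, half is to compare $m_i$ with the generic corank $c_i$ of $M$ along $D_i$, i.e. with $\dim G_v$ for generic $v\in D_i$. Because $\Delta$ vanishes on $D_i$ one always has $m_i\ge c_i\ge 1$. Hence if $D$ is a linear free divisor then $c_i=1$, so the generic orbit in $D_i$ has dimension $n-1$ and is therefore open in $D_i$; this produces the orbit $D_i^0$ of \eqref{cond:isotropyrep}. Conversely, \eqref{cond:isotropyrep} gives $c_i=1$ directly, so $m_i\ge 1$, and the only remaining point, in both directions, is that $m_i=1$ is equivalent to the one-dimensional isotropy $G_v^0$ being $\Gm$ rather than $\Ga$.

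For that equivalence I would compute the vanishing of $\Delta$ transverse to $D_i$. Pick a generic $v\in D_i$, with $G_v^0$ one-dimensional, generator $X_0\in\g$, and $\xi_{X_0}(v)=0$; let $A=d(\xi_{X_0})_{(v)}$ be its linearization. Since $\xi_{X_0}$ is tangent to the smooth hypersurface $D_i$ near $v$ and vanishes at $v$, $A$ preserves $T_vD_i$ and acts on the normal line $T_vV/T_vD_i$ by a scalar $\lambda_i$. Expanding $\partial\Delta=\tr(\adj(M)\,\partial M)$ and using that at a corank-one point $\adj(M)$ is the rank-one product of the left and right null vectors of $M$ (the conormal of $D_i$ and the vector representing $X_0$), one finds that the transverse derivative of $\Delta$ along $D_i$ is a nonzero multiple of $\lambda_i$, so $m_i=1$ if and only if $\lambda_i\ne 0$. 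Finally, $\lambda_i$ is the weight of the $G_v^0$-action on the normal line, an algebraic character $G_v^0\to\Gm$; since $\Ga$ admits no nontrivial character to $\Gm$, the case $G_v^0\cong\Ga$ forces $\lambda_i=0$, whereas for $G_v^0\cong\Gm$ one linearizes the reductive action near its fixed point $v$ and checks $\lambda_i\ne 0$: a zero weight would make the normal line pointwise fixed, giving positive-dimensional isotropy at nearby points of $\Omega$, contradicting \eqref{cond:vmdopen}. This closes both implications.

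The two closing assertions then come out for free: that $\tau(\g)$ generates $\Derlog{V}{D}$ is precisely the content of Saito's criterion once $\Delta$ is reduced, and $D_i^0=\smooth(D)\cap D_i$ holds because $D_i^0$ is a dense $G$-invariant open subset of $D_i$ contained in $\smooth(D)$, while its complement in $D_i$ is $G$-invariant of dimension $<n-1$ and hence lies in $\sing(D)\cup\bigcup_{j\ne i}D_j$. I expect the genuine obstacle to be the transversality step --- pinning down $m_i$ in terms of $\lambda_i$ and excluding a zero weight in the $\Gm$-case; the example $G=\left\{\left(\begin{smallmatrix} a&0\\ b&a\end{smallmatrix}\right)\right\}$ acting on $\C^2$, where $\Delta=x^2$ while the generic isotropy on $V(x)$ is $\Ga$, shows that the dichotomy between $\Gm$ and $\Ga$ is exactly what separates the reduced from the non-reduced case.
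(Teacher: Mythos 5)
The paper does not actually prove Theorem \ref{thm:brion}; it quotes it from \cite{brion} and \cite[Theorem 2.1]{freedivisorsinpvs}, extracting only Corollary \ref{cor:tobrion} from ``the proof.'' Your argument is essentially that standard proof: reduce everything via Saito's criterion to showing the multiplicity $m_i$ of $f_i$ in the Saito determinant $\Delta$ equals $1$; identify the generic corank of the Saito matrix $M$ along $D_i$ with the generic isotropy dimension (which handles the existence of the open orbits $D_i^0$ and the bound $m_i\geq c_i\geq 1$); and then decide whether $m_i=1$ by computing $d\Delta=\tr(\adj(M)\,dM)$ at a corank-one point, where the rank-one structure of $\adj(M)$ exhibits the transverse derivative of $\Delta$ as a nonzero multiple of the weight $\lambda_i$ of the linearized isotropy action on the normal line. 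The resulting dichotomy, $m_i=1$ if and only if the normal representation of $G^0_{v}$ is nontrivial if and only if $G^0_v\cong\Gm$ rather than $\Ga$, is precisely Corollary \ref{cor:tobrion}, and your two-sided justification (no nontrivial character $\Ga\to\Gm$ by the Jordan decomposition; a zero weight for $\Gm\subseteq\GL(V)$ would fix a line transverse to $D_i$ and hence force positive-dimensional isotropy at nearby points of $\Omega$) is correct.

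One step is stated too loosely. For $D_i^0=\smooth(D)\cap D_i$ you say the complement of $D_i^0$ in $D_i$ is $G$-invariant of dimension $<n-1$ and ``hence'' lies in $\sing(D)\cup\bigcup_{j\neq i}D_j$; that inclusion does not follow from invariance and dimension alone. The repair is already available in your framework: if $v\in\smooth(D)\cap D_i$ then, $\Delta$ being reduced, $d\Delta_{(v)}\neq 0$, while $d\Delta=\tr(\adj(M)\,dM)$ vanishes wherever $\rank M\leq n-2$; hence $\rank M(v)=n-1$, the orbit of $v$ has dimension $n-1$, and $v\in D_i^0$. With that adjustment the proof is complete and matches the approach of the cited sources.
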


The proof of Theorem \ref{thm:brion} shows the following.

\begin{corollary}
\label{cor:tobrion}
Suppose that in the situation of Theorem \ref{thm:brion},
\eqref{cond:vmdopen} holds,
$D_i$ contains an open orbit $D_i^0$, and $v_i\in D_i^0$.
Then $G_{v_i}$ is an extension of a finite group by $\Gm$
if and only if
the induced representation of $G^0_{v_i}$ on the normal line to $D_i$ at
$v_i$ is nontrivial.
\end{corollary}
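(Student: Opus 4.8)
The plan is to reduce the statement to the structure of a one--dimensional group and then treat the two implications separately, expecting the forward direction to carry all of the content.

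First I would pin down the dimension. Under~\eqref{cond:vmdopen} the open orbit $V\setminus D$ has finite isotropy, so $\dim(G)=\dim(V)$; since $v_i$ lies in the open orbit $D_i^0$ of the hypersurface component $D_i$, its orbit has dimension $\dim(V)-1$, whence $\dim(G_{v_i})=1$. Thus $G^0_{v_i}$ is a connected one--dimensional complex linear algebraic group, so $G^0_{v_i}\cong\Gm$ or $G^0_{v_i}\cong\Ga$; as the component group $G_{v_i}/G^0_{v_i}$ is finite, the hypothesis that $G_{v_i}$ is an extension of a finite group by $\Gm$ is precisely the condition $G^0_{v_i}\cong\Gm$. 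Because $G^0_{v_i}$ fixes $v_i$ and, being connected, preserves the component $D_i$ and hence the hyperplane $T_{v_i}D_i$ (here $v_i\in D_i^0$ is a smooth point of $D$, since the orbit $D_i^0$ meets $\smooth(D)$), it acts on the normal line $N:=V/T_{v_i}D_i$; this action is the representation $\nu\colon G^0_{v_i}\to\GL(N)\cong\Gm$ under discussion. Differentiating~\eqref{eqn:relinv} for the basic relative invariant $f_i\associatedmult\chi_i$ at $v_i$ in fact identifies $\nu$ with $\chi_i|_{G^0_{v_i}}$, though I would not need this.

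The reverse implication should be immediate: if $\nu$ is nontrivial it is a nontrivial character of $G^0_{v_i}$, and since $\Ga$ admits no nontrivial character (by the Jordan decomposition, as in the proof of Lemma~\ref{lemma:rankdim}), this forces $G^0_{v_i}\cong\Gm$.

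The forward implication is where the work lies, and I would argue it by contradiction. Suppose $T:=G^0_{v_i}\cong\Gm$ but $\nu$ is trivial. I would decompose $V=\bigoplus_k V_k$ into weight spaces for the torus $T$, with $t\in T$ acting on $V_k$ by $t^k$, noting $v_i\in V_0=V^T$. Since $T_{v_i}D_i$ is $T$--invariant and $\nu$ is trivial, the quotient $N$ is a weight--zero representation, so each $V_k$ with $k\neq0$ maps to $0$ in $N$, giving $\bigoplus_{k\neq0}V_k\subseteq T_{v_i}D_i$. As $T_{v_i}D_i$ is a proper subspace, this forces $V^T\not\subseteq T_{v_i}D_i$, so I may choose $w\in V^T$ with $w\notin T_{v_i}D_i$. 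Then the line $v_i+\C w$ lies in $V^T$ and is transverse to $D$ at the smooth point $v_i$, so $v_i+\epsilon w\in\Omega$ for all small $\epsilon\neq0$; yet every such point is fixed by $T$, so its isotropy contains the positive--dimensional group $T$, contradicting the finiteness of isotropy on $\Omega$ from~\eqref{cond:vmdopen}. Hence $\nu$ is nontrivial. I expect this contradiction to be the crux: the real content is that a torus fixing the normal line would fix an entire transverse line and thus a point of the open orbit, and the weight--space decomposition is exactly what exposes such a transverse fixed direction. For a unipotent ($\Ga$) isotropy no such decomposition is available, which is consistent with $\nu$ being forced to be trivial there and with the corollary being a genuine equivalence.
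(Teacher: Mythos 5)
Your proof is correct and follows essentially the same route as the paper's (which is only sketched there, deferring to the proof of Brion's criterion): the easy direction via the absence of nontrivial characters of $\Ga$, and the forward direction by using complete reducibility of the $\Gm$--action --- your weight-space decomposition is exactly the explicit form of ``split $\rho|_{\Gm}$ into irreducibles'' --- to produce a $T$-fixed line transverse to $D_i$ at $v_i$, whose nearby points lie in $\Omega$ and contradict the finiteness of isotropy in condition \eqref{cond:vmdopen}.
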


The representation on the normal line is actually quite familiar.

\begin{lemma}
\label{lemma:interpnormal}
Let $(G,\rho,V)$ be a prehomogeneous vector space with $f$ as a basic
relative invariant, and let $v$ be a smooth point of $D:=V(f)$.
If $H\subseteq G_v$, 
then the representation
$\rho_v:H\to\GL(L)$ on the
normal line $L=T_v V/T_vD$ to $D$ at $v$ is
\begin{equation*}
\rho_v(h)(\ell)=\chi(h)\ell,\qquad \textrm{for all }\ell\in L,
\end{equation*}
where $f\associatedmult\chi$.
\end{lemma}

Geometrically,
$\rho_v$ acts on a normal slice to $f=0$ at $v$, and such a slice
intersects all level sets of $f$.
At the same time, the action of $\rho(H)$ fixes $v$ and
translates between the level sets of $f$ according to $\chi$.

%
\begin{proof}
The representation $\rho|_H$ fixes $v$ and hence
induces a representation $\rho'$ of $H$
on the tangent space $T_vV$;
by silently identifying $T_vV$ with $V$, we have $\rho'=\rho|_H$.
Since $\rho|_H$ leaves invariant $D$ and fixes the smooth point $v$ on
$D$,
the representation $\rho'$ leaves invariant $T_vD$.
Then the normal line is $L=T_vV/T_vD$, and $\rho'$ produces the quotient
representation $\rho_v:H\to \GL(L)$ defined by
$\rho_v(h)(w+T_vD)=\rho'(h)(w)+T_vD$.

Let $h\in H$ and $w\in T_vV$.
For $\lambda\in\C$ we have by \eqref{eqn:relinv},
\begin{equation}
\label{eqn:fvlambda}
f(v+\lambda\cdot \rho(h)(w))=f(\rho(h)(v+\lambda w))=\chi(h)
f(v+\lambda w).
\end{equation}
Differentiating \eqref{eqn:fvlambda} with respect to $\lambda$ and
evaluating at $\lambda=0$ gives
$df_{(v)}(\rho'(h)(w))=\chi(h) df_{(v)}(w)$,
or
$\rho'(h)(w)-\chi(h)w\in\ker(df_{(v)})$.
Since $f$ is reduced and $v$ is a
smooth point, $T_vD=\ker(df_{(v)}:T_vV\to T_0\C)$.
The definition of $\rho_v$ then implies the result. 
\end{proof}

\begin{remark}
If $(G,\rho,V)$ defines a linear free divisor $D$
in the sense of Definition \ref{def:defineslfd}, then
all of the results of this \S\ref{subsec:brion} hold for
$G$ as well as
$\rho(G)=(\GL(V)_D)^0$.
\end{remark}

  \subsection{The main theorems}
\label{subsec:mainthm}
Let $(G,\rho,V)$ define the linear free divisor
$D\subset V$ in the sense of Definition \ref{def:defineslfd}.
Let $f_1,\ldots,f_r$ be the basic relative invariants, so that
$\cup_{i=1}^r V(f_i)$ is the irreducible decomposition of
$D=V\setminus \Omega$.
Let $f_i\associatedmult \chi_i$, and choose $v_0\in\Omega$.
For $1\leq i\leq r$ let $v_i$ be a generic point on $V(f_i)$,
an element of the open orbit of $G$ in $V(f_i)$.

\begin{theorem}
\label{thm:homomorphism}
Let 
$G$ define the linear free divisor $D$, with the notation above.
Then:
\begin{enumerate}
\item
\label{enit:1}
The homomorphism $(\chi_1,\ldots,\chi_r):G\to (\Gm)^r$
is surjective and has kernel
$[G,G]\cdot G_{v_0}$.
\item
\label{enit:2}
$G/[G,G]\cdot G_{v_0}$ has only the trivial additive function and
$(G,\rho,V)$ has only constant additive relative invariants.
\item
\label{enit:3}
For $i\neq j$, we have $G^0_{v_j}\subseteq \ker(\chi_i)$.
\item
\label{enit:4}
The representation $\chi_i|_{G^0_{v_i}}:G^0_{v_i}\to\Gm$ is surjective, with finite
kernel.
\item
\label{enit:5}
$\ker(\chi_i|_{G^0_{v_i}})=G^0_{v_i}\cap ([G,G]\cdot G_{v_0})$ is
finite.
\item
\label{enit:6}
For $i\neq j$, the subgroup $G^0_{v_i}\cap G^0_{v_j}$ is a finite subset of
$[G,G]\cdot G_{v_0}$.
\end{enumerate}
\end{theorem}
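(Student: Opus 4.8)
The plan is to prove the six statements in the order \eqref{enit:3}, \eqref{enit:4}, \eqref{enit:2}, \eqref{enit:1}, \eqref{enit:5}, \eqref{enit:6}, since once the two substantive facts are in hand—Part \eqref{enit:4}, which is the only place the linear-free-divisor hypothesis really enters, and Part \eqref{enit:2}—everything else is bookkeeping with characters and isotropy subgroups. For \eqref{enit:3}: by Theorem \ref{thm:brion} the point $v_j$ lies in $D_j^0=\smooth(D)\cap D_j$, so it is a smooth point of $D$ and hence $v_j\notin V(f_i)$ for $i\neq j$; Lemma \ref{lemma:vanishingthm}\eqref{enit:figv} then gives $G_{v_j}\subseteq\ker(\chi_i)$, and a fortiori $G^0_{v_j}\subseteq\ker(\chi_i)$. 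For \eqref{enit:4}, I combine three earlier results: by Theorem \ref{thm:brion}\eqref{cond:isotropyrep} the group $G_{v_i}$ is an extension of a finite group by $\Gm$; by Corollary \ref{cor:tobrion} the induced representation of $G^0_{v_i}$ on the normal line to $D_i$ at $v_i$ is therefore nontrivial; and by Lemma \ref{lemma:interpnormal}, applied with $H=G^0_{v_i}$ and $f=f_i$ at the smooth point $v_i$ of $V(f_i)$, that representation is multiplication by $\chi_i$. Thus $\chi_i|_{G^0_{v_i}}$ is a nontrivial homomorphism from the connected group $G^0_{v_i}$ to $\Gm$, so its image is a nontrivial connected subgroup of $\Gm$, hence all of $\Gm$; and since $\dim(G)=\dim(V)$ while $\dim(G\cdot v_i)=\dim(D_i)=\dim(V)-1$, we get $\dim(G^0_{v_i})=1$, so the kernel of $\chi_i|_{G^0_{v_i}}$ is $0$-dimensional and hence finite.

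For \eqref{enit:2}, I argue by contradiction against \eqref{enit:4}. Suppose $\A_1(G)\neq\{0\}$ and choose a nonzero $\Phi$ with a nonconstant additive relative invariant $h=\frac{h_1}{g_1}\associatedadd\Phi$ in lowest terms as in Proposition \ref{prop:homogeneous}, with $g_1=f_1^{k_1}\cdots f_r^{k_r}$. Since $h$ is nonconstant some $k_i>0$, and because $h_1$ and $g_1$ share no factor, $f_i\nmid h_1$, so $h_1$ does not vanish identically on the irreducible $V(f_i)$; pick $w\in D_i^0$ with $h_1(w)\neq 0$. Then Lemma \ref{lemma:vanishingthm}\eqref{enit:gvichi} yields $G^0_w\subseteq\ker(\chi_i)$. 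But $w$ lies in the same $G$-orbit $D_i^0$ as $v_i$, so $G^0_w$ is conjugate to $G^0_{v_i}$, and since $\Gm$ is abelian $\chi_i$ is conjugation-invariant, giving $\chi_i(G^0_w)=\chi_i(G^0_{v_i})=\Gm$ by \eqref{enit:4}—contradicting $G^0_w\subseteq\ker(\chi_i)$. Hence $\A_1(G)=\{0\}$, so by Proposition \ref{prop:a1} also $\A(H)=\{0\}$, and there are no nonconstant additive relative invariants.

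For \eqref{enit:1}: applying Lemma \ref{lemma:newvanishingthm}\eqref{enit:gvkerchi} at $v_0\in\Omega$ (where each $f_i(v_0)\neq 0$) gives $G_{v_0}\subseteq\ker(\chi_i)$ for every $i$, and as $(\Gm)^r$ is abelian also $[G,G]\subseteq\ker$, so $[G,G]\cdot G_{v_0}\subseteq\ker\bigl((\chi_1,\ldots,\chi_r)\bigr)$. Surjectivity follows from \eqref{enit:3} and \eqref{enit:4}: the image $(\chi_1,\ldots,\chi_r)(G^0_{v_i})$ equals $\{1\}^{i-1}\times\Gm\times\{1\}^{r-i}$, and combining over $i$ shows the image of $G$ contains every coordinate factor, hence is all of $(\Gm)^r$. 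For the reverse kernel inclusion I use \eqref{enit:2}: since $\A(H)=\{0\}$, Proposition \ref{prop:ckcl} and Lemma \ref{lemma:rankdim} give $H\cong(\Gm)^r$, and by Theorem \ref{thm:pvs1} and Proposition \ref{prop:x1is} the induced $\widetilde{\chi}_i\in X(H)$ form a free basis; as characters separate points on a torus, their common kernel in $H$ is trivial, whence $\ker\bigl((\chi_1,\ldots,\chi_r)\bigr)=\ker(\pi)=[G,G]\cdot G_{v_0}$ for $\pi\colon G\to H$ the quotient.

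Parts \eqref{enit:5} and \eqref{enit:6} are then formal. For \eqref{enit:5}, any $g\in G^0_{v_i}$ has $\chi_j(g)=1$ for $j\neq i$ by \eqref{enit:3}, so by \eqref{enit:1} the membership $g\in[G,G]\cdot G_{v_0}$ is equivalent to $\chi_i(g)=1$; thus $G^0_{v_i}\cap([G,G]\cdot G_{v_0})=\ker(\chi_i|_{G^0_{v_i}})$, finite by \eqref{enit:4}. For \eqref{enit:6}, if $i\neq j$ then any $g\in G^0_{v_i}\cap G^0_{v_j}$ satisfies $\chi_\ell(g)=1$ for all $\ell$ (apply \eqref{enit:3} to each isotropy group to kill every index), so $g\in[G,G]\cdot G_{v_0}$ by \eqref{enit:1}; and $G^0_{v_i}\cap G^0_{v_j}\subseteq\ker(\chi_i|_{G^0_{v_i}})$ is finite by \eqref{enit:4}. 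The genuine content, and the sole use of the linear-free-divisor hypothesis, is Part \eqref{enit:4}: the main obstacle is to pin down the normal-line representation of the one-dimensional isotropy $G^0_{v_i}$ as $\chi_i$ and to see via Brion's criterion that it is nontrivial. Once \eqref{enit:4} is secured, the striking Part \eqref{enit:2} falls out immediately by playing it against the vanishing Lemma \ref{lemma:vanishingthm}\eqref{enit:gvichi}, and the remaining parts are pure character bookkeeping.
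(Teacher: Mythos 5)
Your proposal is correct and follows essentially the same route as the paper: part (\ref{enit:4}) via Brion's criterion, Corollary \ref{cor:tobrion}, and Lemma \ref{lemma:interpnormal}; part (\ref{enit:2}) by playing Lemma \ref{lemma:vanishingthm}(\ref{enit:gvichi}) against (\ref{enit:4}); and the remaining parts as character bookkeeping. The only (harmless) deviation is in (\ref{enit:2}), where you pick a possibly different point $w\in D_i^0$ with $h_1(w)\neq 0$ and transfer the contradiction to $v_i$ by conjugacy of isotropy subgroups, whereas the paper shows directly that $h_1(v_i)\neq 0$ by noting via Proposition \ref{prop:globaleqn} that $h_1(v_i)=0$ would force $h_1$ to vanish on all of $\overline{G\cdot v_i}=V(f_i)$, contradicting lowest terms.
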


\begin{proof}
Let $G_1=[G,G]\cdot G_{v_0}$.
Claim 
\eqref{enit:3} is just  
Lemma \ref{lemma:vanishingthm}\eqref{enit:figv}.

By
Theorem \ref{thm:brion},
Corollary \ref{cor:tobrion}, and Lemma \ref{lemma:interpnormal},
each $\chi_i|_{G^0_{v_i}}$ is nontrivial, thus surjective.
As $\dim(G^0_{v_i})=1=\dim(\GL(\C))$, the kernel is finite, giving
\eqref{enit:4}.

If $\Phi\in\A(G/G_1)$ is nontrivial, then by Proposition \ref{prop:a1} there
exists a non-constant $h$ with $h\associatedadd \Phi$.
By Proposition \ref{prop:homogeneous}, write
$h=\frac{h_1}{g_1}$ as a reduced fraction, with
$g_1$ a polynomial relative invariant and
$g_1\associatedmult \chi$.
Since $\Phi$ is nontrivial, $g_1$ cannot be constant and so by
Theorem \ref{thm:pvs1} is a product of basic relative invariants;
let $f_i$ be an irreducible factor of $g_1$.
If $h_1(v_i)=0$, then by Proposition \ref{prop:globaleqn},
$h_1$ vanishes on $G\cdot v_i$ and hence on
$\overline{G\cdot v_i}=V(f_i)$; but then $f_i$ divides
$\gcd(h_1,g_1)$, a contradiction of how we wrote $h$, hence $h_1(v_i)\neq 0$.
By
Lemma \ref{lemma:vanishingthm}\eqref{enit:gvichi}
we have $G^0_{v_i}\subset \ker(\chi_i)$,
but this contradicts \eqref{enit:4}.
Thus $\Phi=0$ is trivial, proving 
\eqref{enit:2}.

Since $G/G_1$ is connected and abelian,
by Proposition \ref{prop:ckcl},
$G/G_1\cong (\Gm)^k\times(\Ga)^\ell$.
Choosing a free basis $\epsilon_1,\ldots,\epsilon_k$ of $X(G/G_1)$
and a basis $\Phi_1,\ldots,\Phi_\ell$ of $\A(G/G_1)$
gives an explicit isomorphism
$\theta=(\epsilon_1,\ldots,\epsilon_k,\Phi_1,\ldots,\Phi_\ell):G/G_1\to
(\Gm)^k\times (\Ga)^\ell$.
Theorem \ref{thm:pvs1} and Proposition \ref{prop:x1is} show that
we may let each $\epsilon_i=\chi_i$.
Then, \eqref{enit:1} follows from \eqref{enit:2}.

For \eqref{enit:5},
such elements are contained in the kernel by \eqref{enit:1},
and conversely by \eqref{enit:3} and \eqref{enit:1}.
By \eqref{enit:4}, the kernel is finite.

For \eqref{enit:6},
observe that by \eqref{enit:3}, such elements are in the kernel of the
homomorphism of \eqref{enit:1}.
For finiteness, use \eqref{enit:5}.
\end{proof}

\begin{remark}
For each $i>0$, the group $G^0_{v_i}$ is a  
$1$--dimensional connected complex linear algebraic group with
a surjective homomorphism to $\Gm$,
and hence 
$G^0_{v_i}\cong \Gm$.
\end{remark}

Theorem \ref{thm:homomorphism} has a number of immediate consequences.
In particular,
we may easily compute the number of irreducible components.

\begin{theorem}
\label{thm:ofhomomorphism}
Let $G$ define the linear free divisor $D$, with the notation above.
Then:
\begin{enumerate}
\item
\label{enit:unique1}
The hypersurface $D$ has 
$$r=\dim_\C(G/[G,G]\cdot
G_{v_0})=\dim_\C(G/[G,G])=\dim_\C(\g/[\g,\g])$$
irreducible
components, where $\g$ is the Lie algebra of $G$.
\item
\label{enit:unique2}
Every element of $G$ may be written in a finite number of ways
as a product of elements
from the subgroups $[G,G]\cdot G_{v_0},G^0_{v_1},\ldots,G^0_{v_r}$.
Each term of such a product is unique modulo
$[G,G]\cdot G_{v_0}$.
\item
\label{enit:unique3}
Let $S\subseteq \{1,\ldots,r\}$.
The subgroup of $G$ that leaves invariant all level sets of $f_i$
for $i\in S$ is normal in $G$, and is the product of the subgroups
$[G,G]\cdot G_{v_0}$,
and $G^0_{v_j}$ for $j\notin S$.
\item
\label{enit:unique5}
As algebraic groups, $G/[G,G]\cong (\Gm)^r$.
\item
\label{enit:unique4}
The subgroup $[G,G]$ contains all unipotent elements of $G$.
\end{enumerate}
\end{theorem}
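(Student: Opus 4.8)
The plan is to derive every part from Theorem \ref{thm:homomorphism}, whose surjection $\Psi := (\chi_1,\ldots,\chi_r)\colon G\to(\Gm)^r$ with kernel $G_1 := [G,G]\cdot G_{v_0}$ does most of the work; I write $G_1$ for this kernel throughout. For \eqref{enit:unique1} I would begin from Corollary \ref{cor:numcomponents}: since $D=\Omega^c$ is a divisor its $r$ irreducible components are all hypersurfaces, so $r=\dim(H)-\dim(\A(H))$ with $H=G/G_1$. Theorem \ref{thm:homomorphism}\eqref{enit:2} gives $\A(H)=\{0\}$, whence $r=\dim(G/G_1)$. Because $(G,\rho,V)$ defines a linear free divisor, the isotropy group $G_{v_0}$ is finite (Theorem \ref{thm:brion}\eqref{cond:vmdopen}), so $G_1=[G,G]\cdot G_{v_0}$ is a finite union of translates of $[G,G]$ and hence $\dim(G_1)=\dim([G,G])$; this yields $\dim(G/G_1)=\dim(G/[G,G])$. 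Finally, for connected $G$ in characteristic $0$ the derived subgroup $[G,G]$ is closed and connected with Lie algebra $[\g,\g]$, so $\dim(G/[G,G])=\dim(\g/[\g,\g])$, completing the chain of equalities.

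Parts \eqref{enit:unique2} and \eqref{enit:unique3} both rest on coordinate matching with $\Psi$. By Theorem \ref{thm:homomorphism}\eqref{enit:3} each $G^0_{v_i}$ lies in $\ker(\chi_j)$ for $j\neq i$, so $\Psi$ carries $G^0_{v_i}$ into the $i$-th factor of $(\Gm)^r$, and by \eqref{enit:4} it does so surjectively. For \eqref{enit:unique2}, given $g\in G$ I would pick $g_i\in G^0_{v_i}$ with $\chi_i(g_i)=\chi_i(g)$ for each $i$; then $\Psi(g_1\cdots g_r)=\Psi(g)$, so $g_0:=g(g_1\cdots g_r)^{-1}\in\ker(\Psi)=G_1$ and $g=g_0g_1\cdots g_r$. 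Applying $\chi_i$ to two such decompositions shows $g_i$ is determined modulo $\ker(\chi_i|_{G^0_{v_i}})=G^0_{v_i}\cap G_1$, which is finite by Theorem \ref{thm:homomorphism}\eqref{enit:5}; this gives both the uniqueness modulo $G_1$ and the finiteness of the number of decompositions. For \eqref{enit:unique3}, I first note that $g$ fixes every level set of $f_i$ exactly when $f_i\circ\rho(g)=f_i$, i.e. $\chi_i(g)=1$, so the subgroup in question is $N:=\bigcap_{i\in S}\ker(\chi_i)$, which is normal as an intersection of kernels of characters of $G$. The inclusions $G_1=\ker(\Psi)\subseteq\ker(\chi_i)$ and $G^0_{v_j}\subseteq\ker(\chi_i)$ for $j\notin S$ put $G_1\cdot\prod_{j\notin S}G^0_{v_j}$ inside $N$, and the same matching argument (now choosing correcting factors only from the $G^0_{v_j}$ with $j\notin S$) shows the reverse inclusion, identifying $N$ with that product.

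The last two parts pass from $G/G_1$ to $G/[G,G]$, which I expect to be the only real subtlety. Since $G$ is connected and $G/[G,G]$ is abelian, Proposition \ref{prop:ckcl} gives $G/[G,G]\cong(\Gm)^k\times(\Ga)^\ell$. The quotient map $G/[G,G]\to G/G_1\cong(\Gm)^r$ has kernel $G_1/[G,G]$, a quotient of the finite group $G_{v_0}$ and hence finite. Homomorphisms respect the Jordan decomposition, so the unipotent part $(\Ga)^\ell$ maps into the unipotent elements of $(\Gm)^r$, which are trivial; thus $(\Ga)^\ell$ lies in this finite kernel and, being torsion-free in characteristic $0$, must vanish, so $\ell=0$. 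Comparing dimensions, $k=\dim(G/[G,G])=\dim(G/G_1)=r$, which gives \eqref{enit:unique5}, namely $G/[G,G]\cong(\Gm)^r$. Part \eqref{enit:unique4} is then immediate: any unipotent $u\in G$ maps to a unipotent element of $G/[G,G]\cong(\Gm)^r$, which can only be the identity, so $u\in[G,G]$.

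The main obstacle, as flagged, is the transition in \eqref{enit:unique5}: Theorem \ref{thm:homomorphism} controls $G/G_1$ directly, but the clean statement $G/[G,G]\cong(\Gm)^r$ requires knowing that the finite discrepancy $G_1/[G,G]$ cannot introduce a unipotent direction, which is exactly where the torsion-freeness of $(\Ga)^\ell$ and the preservation of the Jordan decomposition are used. Everything else is bookkeeping with the surjection $\Psi$ and the finiteness facts already established.
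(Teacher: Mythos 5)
Your proposal is correct and follows essentially the same route as the paper: part (1) from Corollary \ref{cor:numcomponents} plus the vanishing of additive functions, parts (2) and (3) by matching coordinates under $(\chi_1,\ldots,\chi_r)$ using Theorem \ref{thm:homomorphism}(3)--(5), and parts (4) and (5) by killing the $\Ga^\ell$ factor of $G/[G,G]$ via the Jordan decomposition and the finiteness of $G_1/[G,G]$. The only (immaterial) differences are that you choose the correcting factors for (3) directly from the $G^0_{v_j}$ with $j\notin S$ rather than invoking (2) and normality, and you conclude $k=r$ by a dimension count rather than by surjectivity.
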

\begin{proof}
For \eqref{enit:unique1}, combine
Theorem \ref{thm:homomorphism}\eqref{enit:2}
and Corollary \ref{cor:numcomponents}.
Note also that $G_{v_0}$ is finite by
Theorem \ref{thm:brion}.

Let $g\in G$.
By Theorem \ref{thm:homomorphism}\eqref{enit:4},
for $i>0$ there exists 
$g_i\in G^0_{v_i}$ such that $\chi_i(g)=\chi_i(g_i)$.
By Theorem \ref{thm:homomorphism}\eqref{enit:3} and 
\ref{thm:homomorphism}\eqref{enit:1},
$g(g_1\cdots g_r)^{-1}$ is in the kernel of each $\chi_i$, and
hence lies in $[G,G]\cdot G_{v_0}$.
This proves existence for \eqref{enit:unique2}.

To address uniqueness, let $g\in G$ and suppose 
that for $j=1,2$ we have
$g=g_{0,j}\cdot g_{1,j}\cdots g_{r,j}$,
with
$g_{0,j}\in [G,G]\cdot G_{v_0}$
and
$g_{i,j}\in G^0_{v_i}$ for $i>0$. 
By Theorem \ref{thm:homomorphism}\eqref{enit:1} and
\ref{thm:homomorphism}\eqref{enit:3},
for $i>0$ we have $\chi_i(g)=\chi_i(g_{i,1})=\chi_i(g_{i,2})$.
Then for $i>0$ we have 
$g_{i,1}=g_{i,2}$ modulo $\ker(\chi_i|_{G^0_{v_i}})$,
and this kernel is finite and 
contained in $[G,G]\cdot G_{v_0}$ by Theorem
\ref{thm:homomorphism}\eqref{enit:5}.
Since $g_{0,j}$ is uniquely determined by $g_{1,j},\ldots,g_{r,j}$,
there are precisely
$$
\prod_{i=1}^r \#(\ker(\chi_i|_{G^0_{v_i}}))$$
ways to write $g$ in this way.  This proves \eqref{enit:unique2}.

By \eqref{eqn:relinv}, $g\in G$ leaves invariant all level sets of
$f_i$ if and only if $g\in\ker(\chi_i)$.
Hence, the subset $H$ leaving invariant all level sets of $f_i$ for
$i\in S$ is an intersection of kernels and thus a normal subgroup.
By Theorems \ref{thm:homomorphism}\eqref{enit:1} and 
\ref{thm:homomorphism}\eqref{enit:3},
$[G,G]\cdot G_{v_0}$ and $G^0_{v_j}$ for $j\notin S$ are in $H$.
Conversely, by
\eqref{enit:unique2} and Theorems
\ref{thm:homomorphism}\eqref{enit:3} and
\ref{thm:homomorphism}\eqref{enit:5}, and the normality of
$[G,G]\cdot G_{v_0}$, any element of $H$ may be written as a product
of elements of these subgroups.
This proves \eqref{enit:unique3}.

To prove \eqref{enit:unique5},
consider the diagram
\begin{equation*}
\xymatrix@-1pc{
G/[G,G] \ar[r]^-\phi \ar[d]^{\kappa}
  & \Gm^k\times\Ga^\ell \ar[d]^{\psi} \\
G/[G,G]\cdot G_{v_0} \ar[r]^-\chi
  & \Gm^r
}
\end{equation*}
where
$\phi$ is an isomorphism that exists by Proposition \ref{prop:ckcl},
$\kappa$ is the quotient map,
$\chi$ is the isomorphism induced from $(\chi_1,\ldots,\chi_r)$
by Theorem \ref{thm:homomorphism}\eqref{enit:1},
and $\psi$ makes the diagram commutative.
The Jordan decomposition implies that $\{1\}\times \Ga^\ell\subseteq
\ker(\psi)$,
but this kernel is isomorphic to $\ker(\kappa)$, which is finite
because $G_{v_0}$ is finite.
Hence $\ell=0$ and $k\leq r$, and
the surjectivity of $\psi$ requires $k\geq r$,
proving \eqref{enit:unique5}.

Finally, \eqref{enit:unique4} is a consequence of \eqref{enit:unique5}
and the Jordan decomposition.
\end{proof}

\begin{example}[{\cite[Example 2.1]{mondbuchweitz}},{\cite[Example
5.1]{gmns}}]
\label{ex:aac}
On $\C^3$, fix coordinates $x,y,z$.
The linear free divisor 
$x(xz-y^2)=0$ is defined by the solvable 
group
$$G=\left\{\left(\begin{smallmatrix}
a & 0 & 0 \\ 
b & c & 0 \\
\frac{b^2}{a} & \frac{2bc}{a} & \frac{c^2}{a}
\end{smallmatrix}\right)\in
\GL(\C^3)\right\}.$$
We have
$f_1=x\associatedmult \chi_1=a$ and
$f_2=xz-y^2\associatedmult \chi_2=c^2$.
At $v_0=(1,0,1)\in \Omega$,
$v_1=(0,1,0)\in V(f_1)$,
and $v_2=(1,0,0)\in V(f_2)$,
the (generic) 
isotropy subgroups are defined by,
respectively,
$(a,b,c)=(1,0,\pm 1)$,
$(b,c)=(0,1)$,
and
$(a,b)=(1,0)$.
As $[G,G]$ is defined by $a=c=1$,
we see $[G,G]\cdot G_{v_0}$ is defined by $a=1$, $c=\pm 1$.
Theorems \ref{thm:homomorphism} and \ref{thm:ofhomomorphism}
are easy to verify.
\end{example}

Recall that $L(K)$ denotes the Lie algebra of an algebraic group $K$.
Let $\delta_{ij}$ denote the Kronecker delta function.
On the level of Lie algebras,
Theorem \ref{thm:homomorphism} implies
the following.

\begin{corollary}
\label{cor:homomorphismliealgebra}
As vector spaces,
$$\g=[\g,\g]\oplus \bigoplus_{i=1}^r L(G_{v_i}).$$
For $i=1,\ldots,r$ 
there exist unique $X_i\in L(G_{v_i})$
such that $L(G_{v_i})=\C X_i$,
and such that for all $j$ we have
$d(\chi_i)_{(e)}(X_j)=\delta_{ij}$
and 
$\xi_{X_j}(f_i)=\delta_{ij}\cdot f_i$.
For $X\in [\g,\g]$ and any $j$, $\xi_X(f_j)=0$.
\end{corollary}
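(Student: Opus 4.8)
The plan is to encode all of the characters into a single linear map and read the decomposition off from its kernel and image. Consider the differential at the identity of the character homomorphism,
$$\Lambda := d(\chi_1,\ldots,\chi_r)_{(e)}\colon \g\to L((\Gm)^r)\cong\C^r,$$
whose $i$-th component is $d(\chi_i)_{(e)}\colon\g\to\C$. The first observation, purely formal, is that $[\g,\g]\subseteq\ker\Lambda$: each $d(\chi_i)_{(e)}$ is a Lie algebra homomorphism into the abelian Lie algebra $\C=L(\Gm)$, so it kills brackets and hence annihilates $[\g,\g]$.

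Next I would analyze the restriction of $\Lambda$ to the lines $L(G_{v_i})$. By the Remark following Theorem \ref{thm:homomorphism} each $G^0_{v_i}\cong\Gm$, so every $L(G_{v_i})$ is one-dimensional; write $L(G_{v_i})=\C X_i$ with $X_i\neq 0$. For $i\neq j$, Theorem \ref{thm:homomorphism}\eqref{enit:3} gives $G^0_{v_j}\subseteq\ker(\chi_i)$, hence $L(G_{v_j})\subseteq\ker(d(\chi_i)_{(e)})$ and $d(\chi_i)_{(e)}(X_j)=0$. For $i=j$, Theorem \ref{thm:homomorphism}\eqref{enit:4} says $\chi_i|_{G^0_{v_i}}\colon G^0_{v_i}\to\Gm$ is surjective, so its differential is a nonzero map of one-dimensional spaces and $d(\chi_i)_{(e)}(X_i)\neq 0$. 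Therefore $\Lambda(X_j)=d(\chi_j)_{(e)}(X_j)\,e_j$ is a nonzero multiple of the $j$-th coordinate vector $e_j$, so $\{\Lambda(X_1),\ldots,\Lambda(X_r)\}$ is a basis of $\C^r$ and $\Lambda$ restricts to an isomorphism of $\bigoplus_i L(G_{v_i})$ onto $\C^r$. In particular $\Lambda$ is surjective and $\dim\ker\Lambda=\dim\g-r$.

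Now I would assemble the statement. Since $r=\dim(\g/[\g,\g])$ by Theorem \ref{thm:ofhomomorphism}\eqref{enit:unique1}, we have $\dim[\g,\g]=\dim\g-r=\dim\ker\Lambda$, which together with the inclusion $[\g,\g]\subseteq\ker\Lambda$ forces $[\g,\g]=\ker\Lambda$. Because $\Lambda$ carries $\bigoplus_i L(G_{v_i})$ isomorphically onto the full target, the elementary splitting lemma gives $\g=\ker\Lambda\oplus\bigoplus_i L(G_{v_i})=[\g,\g]\oplus\bigoplus_i L(G_{v_i})$, the asserted direct sum. Rescaling each $X_i$ (legitimate since $d(\chi_i)_{(e)}(X_i)\neq 0$) arranges $d(\chi_i)_{(e)}(X_i)=1$, and combined with the vanishing for $i\neq j$ this yields $d(\chi_i)_{(e)}(X_j)=\delta_{ij}$; the normalization $d(\chi_i)_{(e)}(X_i)=1$ pins $X_i$ uniquely within the line $L(G_{v_i})$. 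The vector-field identities are then immediate from \eqref{eqn:diffrelinv}: since $f_i\associatedmult\chi_i$, we get $\xi_{X_j}(f_i)=d(\chi_i)_{(e)}(X_j)\cdot f_i=\delta_{ij}\,f_i$, and for $X\in[\g,\g]=\ker\Lambda$ we get $\xi_X(f_i)=d(\chi_i)_{(e)}(X)\cdot f_i=0$ for every $i$.

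The routine linear algebra aside, the one point demanding care is the identification $\ker\Lambda=[\g,\g]$. I would deliberately avoid routing this through $L([G,G]\cdot G_{v_0})=[\g,\g]$, which would invoke the characteristic-zero facts that the derived subgroup has Lie algebra $[\g,\g]$ and that the finite factor $G_{v_0}$ is invisible to the Lie algebra. Instead the inclusion $[\g,\g]\subseteq\ker\Lambda$ is formal, and the reverse inclusion comes for free from the dimension count already established in Theorem \ref{thm:ofhomomorphism}\eqref{enit:unique1}. Thus the only genuine input beyond bookkeeping is the diagonal shape of the matrix $(d(\chi_i)_{(e)}(X_j))$, for which parts \eqref{enit:3} and \eqref{enit:4} of Theorem \ref{thm:homomorphism} are precisely tailored.
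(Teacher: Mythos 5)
Your proof is correct and follows essentially the same route as the paper's: differentiate $(\chi_1,\ldots,\chi_r)$ at the identity, use Theorem \ref{thm:homomorphism}\eqref{enit:3} and \eqref{enit:4} to see that the lines $L(G_{v_i})$ map diagonally and isomorphically onto $L((\Gm)^r)$, and read off everything else from \eqref{eqn:diffrelinv}. The only (harmless) variation is how you identify the kernel with $[\g,\g]$ — the paper obtains it directly by differentiating the group-level statement of Theorem \ref{thm:homomorphism}\eqref{enit:1}, while you use the formal inclusion $[\g,\g]\subseteq\ker\Lambda$ plus the dimension count from Theorem \ref{thm:ofhomomorphism}\eqref{enit:unique1}.
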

\begin{proof}
Differentiating the homomorphism of Theorem
\ref{thm:homomorphism}\eqref{enit:1}
gives a homomorphism $\lambda:\g\to L((\Gm)^r)=\oplus_{i=1}^r L(\Gm)$ with kernel
$[\g,\g]$.
By Theorems \ref{thm:homomorphism}\eqref{enit:3} and
\ref{thm:homomorphism}\eqref{enit:4},
under $\lambda$ each $L(G_{v_i})$ surjects onto the $i$th copy of
$L(\Gm)$, and is zero on the rest.
This gives the vector space decomposition, and proves that for
$i\neq j$, $d(\chi_i)_{(e)}(L(G_{v_j}))=0$.
Now choose the unique $X_i\in L(G_{v_i})$ such that $d(\chi_i)_{(e)}(X_i)=1$.

The rest of the statement follows from
\eqref{eqn:diffrelinv}.
\end{proof}

Each $X_i$ depends on the choice of $v_i\in V(f_i)$,
but
any two choices will differ by an element of
$[\g,\g]$. 

\begin{remark}
\label{rem:worksmoregenerally}
More generally, let $(G,\rho,V)$ be a prehomogeneous vector space with
no nontrivial additive relative invariants.
(For instance, by Remark \ref{rem:fisquared}, this happens
if the ideal of Remark \ref{rem:omegac}
is not contained in $(f_i)^2$ for every basic relative invariant $f_i$.)
Then by the same argument in Theorems \ref{thm:homomorphism} and
\ref{thm:ofhomomorphism},
$G/[G,G]\cdot G_{v_0}$ is an
algebraic torus, of dimension equal to the number of irreducible hypersurface
components of the exceptional orbit variety $V\setminus \Omega$.
\end{remark}

\subsection{The structure of \texorpdfstring{$G$}{G}}
\label{subsec:structureg}
We now use Theorems \ref{thm:homomorphism} and 
\ref{thm:ofhomomorphism} to study the structure of algebraic groups
defining linear free divisors.

Let $G$ be a connected complex algebraic group.
Let $\Rad(G)$ denote the \emph{radical} of $G$, the maximal
connected normal solvable subgroup.
The (algebraic) \emph{Levi decomposition} of $G$
writes 
$$G=\Rad_u(G)\rtimes L,$$
where $\Rad_u(G)$ is the \emph{unipotent radical} of
$G$,
the largest connected unipotent normal subgroup of $G$, consisting of
all unipotent elements of $\Rad(G)$; and $L$ is
a \emph{Levi subgroup}, a maximal connected reductive algebraic subgroup of $G$,
unique up to conjugation
(\cite[11.22]{borel}). 
Moreover, $L=\gpcenter(L)^0\cdot [L,L]$
for 
$\gpcenter(L)$ the center of $L$,
$\gpcenter(L)\cap [L,L]$ is finite,
$[L,L]$ is semisimple,
and $(\gpcenter(L))^0=L\cap \Rad(G)$ is a maximal torus of
$\Rad(G)=\Rad_u(G)\rtimes \gpcenter(L)^0$
(\cite[14.2, 11.23]{borel}). 

Groups defining linear free divisors have the following structure.

\begin{corollary}
\label{cor:ofhomomorphism2}
Let $G$ define the linear free divisor $D$ and have the
Levi decomposition
above.  Then:
\begin{enumerate}
\item
\label{enit:gencomponents}
The number of irreducible components of $D$ equals 
$\dim(\gpcenter(L))$.
\item
\label{enit:genbracket}
$[G,G]=\Rad_u(G)\rtimes [L,L]$.
\item
\label{enit:geng}
$G=[G,G]\cdot (\gpcenter(L))^0$, with $[G,G]\cap (\gpcenter(L))^0$ finite.
\item
\label{enit:genisomorphism}
$(\chi_1,\ldots,\chi_r)|_{\gpcenter(L)^0}:\gpcenter(L)^0\to (\Gm)^r$ is
surjective, with a finite kernel.
\end{enumerate}
\end{corollary}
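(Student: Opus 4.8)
The plan is to derive all four claims from the Levi decomposition structure together with Theorems \ref{thm:homomorphism} and \ref{thm:ofhomomorphism}, using the fact that $[G,G]$ contains every unipotent element of $G$ (Theorem \ref{thm:ofhomomorphism}\eqref{enit:unique4}) as the crucial bridge. First I would establish \eqref{enit:genbracket}. The inclusion $\Rad_u(G)\rtimes[L,L]\subseteq[G,G]$ is easy: $\Rad_u(G)$ is unipotent and hence lies in $[G,G]$ by \eqref{enit:unique4}, while $[L,L]$ is semisimple, so $[L,L]=[[L,L],[L,L]]\subseteq[G,G]$. For the reverse inclusion, I would compute $[G,G]$ from the semidirect product $G=\Rad_u(G)\rtimes L$ with $L=\gpcenter(L)^0\cdot[L,L]$: commutators of two Levi elements land in $[L,L]$ up to the $\Rad_u(G)$ part, commutators involving $\Rad_u(G)$ stay in $\Rad_u(G)$ by normality, and the central torus $\gpcenter(L)^0$ contributes nothing to commutators of $L$ with itself. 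Thus $[G,G]\subseteq \Rad_u(G)\rtimes[L,L]$, giving equality.

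Next, \eqref{enit:geng} follows quickly from \eqref{enit:genbracket}. Since $G=\Rad_u(G)\rtimes L$ and $L=\gpcenter(L)^0\cdot[L,L]$, every $g\in G$ can be written using factors from $\Rad_u(G)$, $[L,L]$, and $\gpcenter(L)^0$; the first two lie in $[G,G]$ by \eqref{enit:genbracket}, so $G=[G,G]\cdot\gpcenter(L)^0$. For the finiteness of $[G,G]\cap\gpcenter(L)^0$, I would intersect with the decomposition: an element of $\gpcenter(L)^0$ lying in $[G,G]=\Rad_u(G)\rtimes[L,L]$ must, by uniqueness in the semidirect product $\Rad_u(G)\rtimes L$, lie in $[L,L]$, so $[G,G]\cap\gpcenter(L)^0\subseteq\gpcenter(L)^0\cap[L,L]=\gpcenter(L)\cap[L,L]$ intersected with the identity component, which is finite by the cited structure of $L$.

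Then \eqref{enit:genisomorphism} and \eqref{enit:gencomponents} follow together. By \eqref{enit:geng}, the composite $\gpcenter(L)^0\hookrightarrow G\twoheadrightarrow G/[G,G]$ is surjective, and by Theorem \ref{thm:ofhomomorphism}\eqref{enit:unique5} we have $G/[G,G]\cong(\Gm)^r$ with the isomorphism realized by $(\chi_1,\ldots,\chi_r)$ (Theorem \ref{thm:homomorphism}\eqref{enit:1}, noting $G_{v_0}$ is finite so the two quotients differ by a finite group). Hence $(\chi_1,\ldots,\chi_r)|_{\gpcenter(L)^0}$ is surjective onto $(\Gm)^r$; its kernel is $\gpcenter(L)^0\cap[G,G]\cdot G_{v_0}$, which is finite by \eqref{enit:geng} and the finiteness of $G_{v_0}$. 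Since $\gpcenter(L)^0$ is a torus mapping surjectively onto $(\Gm)^r$ with finite kernel, $\dim(\gpcenter(L))=\dim(\gpcenter(L)^0)=r$, which is \eqref{enit:gencomponents} after invoking Theorem \ref{thm:ofhomomorphism}\eqref{enit:unique1}.

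The main obstacle I anticipate is the bookkeeping in \eqref{enit:genbracket}: carefully verifying that commutators in the semidirect product $\Rad_u(G)\rtimes L$ do not escape $\Rad_u(G)\rtimes[L,L]$ requires tracking how $L$ acts on $\Rad_u(G)$ and confirming that the central torus $\gpcenter(L)^0$ genuinely drops out of $[L,L]$. Everything downstream is essentially a clean application of the semidirect-product uniqueness and the torus-surjection argument, so the real content is confirming this one computation and the finiteness of $\gpcenter(L)\cap[L,L]$, which I would take from the stated properties of the Levi subgroup.
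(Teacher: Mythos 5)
Your proposal is correct and follows essentially the same route as the paper: establish $[G,G]=\Rad_u(G)\rtimes[L,L]$ by combining the semidirect-product commutator computation with the fact that $[G,G]$ contains all unipotent elements (Theorem \ref{thm:ofhomomorphism}\eqref{enit:unique4}), deduce $G=[G,G]\cdot\gpcenter(L)^0$ with finite intersection, and then read off the component count and the finite-kernel surjection onto $(\Gm)^r$ from Theorems \ref{thm:homomorphism}\eqref{enit:1} and \ref{thm:ofhomomorphism}. The only cosmetic difference is that the paper verifies \eqref{enit:genisomorphism} at the Lie algebra level via $\ker(d\chi_{(e)})=[\g,\g]$, whereas you argue directly with the group-level surjection $\gpcenter(L)^0\to G/[G,G]\cdot G_{v_0}\cong(\Gm)^r$; both are sound.
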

\begin{proof}
Let $R=\Rad_u(G)$.
Since $G$ is the semidirect product of $R$ and $L$,
a straightforward calculation shows that
$[G,G]=[R,R]\cdot [R,L]\cdot [L,L]$.
%
%
%
%
%
By Theorem \ref{thm:ofhomomorphism}\eqref{enit:unique4} and connectedness,
$R\subseteq [G,G]$.
Since $R\nsub G$, we have $[R,R],[R,L]\subseteq R$ and hence
$$
[G,G]
= R\cdot [G,G] 
= R\cdot [R,R]\cdot [R,L]\cdot [L,L] 
= R\cdot [L,L].$$
As $R$ consists of unipotent elements and 
$\Rad(G)\cap L$ consists of semisimple elements,
$R\cap L=\{e\}$ and hence $R\cap [L,L]=\{e\}$.
Since $R\subseteq [G,G]$ are normal subgroups of $G$,
we have $R\nsub [G,G]$.
This proves the decomposition \eqref{enit:genbracket}
as abstract groups, and hence as complex algebraic groups
(\cite[\S3.3--3.4]{ov}).

From this, we conclude 
\begin{equation*}
\label{eqn:gstructure}
G=R\cdot L=R\cdot [L,L]\cdot (\gpcenter(L))^0=[G,G]\cdot (\gpcenter(L))^0.
\end{equation*}
Clearly, $[L,L]\subseteq [G,G]\cap L$.
If $g\in[G,G]\cap L$, then by \eqref{enit:genbracket} we have
$g=k\cdot \ell$ for $k\in R$ and $\ell\in [L,L]\subseteq L$, hence
$k=g\ell^{-1}\in L\cap R=\{e\}$ and so $g=\ell\in[L,L]$; thus $[G,G]\cap L=[L,L]$.
In particular, $[G,G]\cap (\gpcenter(L))^0=
[L,L]\cap (\gpcenter(L))^0$, which is finite.
This proves \eqref{enit:geng}.

By \eqref{enit:geng} and an isomorphism theorem,
$$
G/[G,G]
\cong (\gpcenter(L))^0/([G,G]\cap \gpcenter(L)^0),$$
and hence $\dim(\gpcenter(L))=\dim(G/[G,G])$.
Theorem \ref{thm:ofhomomorphism}\eqref{enit:unique1}
then proves \eqref{enit:gencomponents}.

Finally, \eqref{enit:genisomorphism}
may be checked on the level of Lie algebras.
By \eqref{enit:geng}, we have
$\g=[\g,\g]\oplus \z(\fl)$, where
$\g$ and $\fl$ are the Lie algebras of $G$ and $L$,
and $\z(\fl)$ is the center of $\fl$.
For $\chi=(\chi_1,\ldots,\chi_r)$,  
we have
$\ker(d\chi_{(e)})=[\g,\g]$
by Theorem \ref{thm:homomorphism}\eqref{enit:1},
and hence $d\chi_{(e)}|_{\z(\fl)}$ is
an isomorphism.
In fact, by Theorem \ref{thm:homomorphism}\eqref{enit:1}
the kernel in \eqref{enit:genisomorphism} is
$(\gpcenter(L))^0\cap ([G,G]\cdot G_{v_0})$.
\end{proof}

\begin{remark}
An arbitrary connected complex linear algebraic group with Levi
decomposition $G=\Rad_u(G)\rtimes L$ has
$$[G,G]=
[\Rad_u(G),\Rad_u(G)]\cdot [\Rad_u(G),L]\cdot [L,L]
\subseteq \Rad_u(G)\rtimes [L,L],$$
and hence
$[\Rad_u(G),\Rad_u(G)]\cdot [\Rad_u(G),L]\subseteq \Rad_u(G)$;
if $G$ defines a linear free divisor, then 
by
Corollary \ref{cor:ofhomomorphism2}\eqref{enit:genbracket}
these are both equalities.
\end{remark}

\begin{remark}
\label{rem:boreltori}
If $B$ is a \emph{Borel subgroup} of $[L,L]$, a maximal connected
solvable subgroup, and $T\subseteq B$ is a maximal
torus of $[L,L]$, then it follows from \cite[11.14]{borel}
that $\Rad_u(G)\cdot B\cdot \gpcenter(L)^0$ is a Borel subgroup of $G$
and $T\cdot \gpcenter(L)^0$ is a maximal torus of $G$.
In particular, the number of irreducible components is at most
the dimension of the maximal torus.
\end{remark}

Consider the following examples of linear free divisors.

\begin{example}
\label{ex:aacpart2}
We continue Example \ref{ex:aac}.
Since $G$ is solvable, $[L,L]$ is trivial and hence
$\Rad_u(G)$ is defined by $a=c=1$,
and a maximal torus $L=\gpcenter(L)$ is defined by $b=0$.
Corollary \ref{cor:ofhomomorphism2} is easy to check;
in particular, the $2$-element subgroup
$L\cap G_{v_0}$ lies in the kernel of
$(\chi_1,\chi_2):G\to\Gm^2$ restricted to $(\gpcenter(L))^0$.
\end{example}

The following example is neither reductive nor solvable.

\begin{example}[{\cite[Example 9.4]{DP-matrixsingI}}]
\label{ex:complicated}
Define the algebraic group 
$$
G=\left\{\left(\begin{smallmatrix}
a & 0 & 0 & 0 \\
0 & b & c & 0 \\
0 & d & e & 0 \\
f & g & h & i\end{smallmatrix}\right)\in\GL(\C^4)\right\}.$$
Let $S$ be the space of $4\times 4$ symmetric matrices with
the usual coordinates
$x_{ij}$, $1\leq i\leq j\leq 4$. 
Let $V\subset S$ be the subspace where $x_{11}=0$.
Let $\rho:G\to\GL(V)$ be defined by $\rho(A)(M)=AMA^T$.
Note that $\ker(\rho)=\{\pm I\}$.
A Levi decomposition of $G$ has
$L$ defined by $f=g=h=0$
and $\Rad(G)$ defined by $c=d=b-e=0$
(the Borel subgroup of lower-triangular $g\in G$ is not normal in $G$).
Then
$\Rad_u(G)$ is defined by $a=b=e=i=1$ and $c=d=0$,
$[L,L]$ is defined by $f=g=h=0$ and $be-cd=a=i=1$,
and
$L\cap \Rad(G)=\gpcenter(L)^0$ is defined by $b=e$ and $c=d=f=g=h=0$. 
Finally,
$[G,G]$ is defined by
$a=be-cd=i=1$.

The exceptional orbit variety is the linear free divisor
defined by $f_1\cdot f_2\cdot f_3$,
where
$$
f_1=\begin{vmatrix}
x_{22} & x_{23} \\
x_{23} & x_{33}
\end{vmatrix},
\qquad
f_2=
\begin{vmatrix}
0      & x_{12} & x_{13} \\
x_{12} & x_{22} & x_{23} \\
x_{13} & x_{23} & x_{33} \end{vmatrix},
\qquad
f_3=
\begin{vmatrix}
0      & x_{12} & x_{13} & x_{14} \\
x_{12} & x_{22} & x_{23} & x_{24} \\
x_{13} & x_{23} & x_{33} & x_{34} \\
x_{14} & x_{24} & x_{34} & x_{44} \end{vmatrix},
$$
corresponding to the characters
$\chi_1=(cd-be)^2$,
$\chi_2=a^2(cd-be)^2$,
$\chi_3=a^2(cd-be)^2i^2$, respectively.
Let
$$
v_0=
\left(\begin{smallmatrix}
0 & 1 & 0 & 0 \\
1 & 1 & 0 & 0 \\
0 & 0 & 1 & 0 \\
0 & 0 & 0 & 1 \end{smallmatrix}\right),
\qquad
v_1=
\left(\begin{smallmatrix}
0 & 1 & 0 & 0 \\
1 & 0 & 0 & 0 \\
0 & 0 & 1 & 0 \\
0 & 0 & 0 & 1 \end{smallmatrix}\right),
\qquad
v_2=
\left(\begin{smallmatrix}
0 & 0 & 1 & 1 \\
0 & 0 & 1 & 0 \\
1 & 1 & 0 & 0 \\
1 & 0 & 0 & 0 \end{smallmatrix}\right),
\qquad
v_3=
\left(\begin{smallmatrix}
0 & 1 & 0 & 0 \\
1 & 1 & 0 & 0 \\
0 & 0 & 1 & 0 \\
0 & 0 & 0 & 0 \end{smallmatrix}\right)
\in V
$$
be generic points in $\Omega$ and on each $V(f_i)$.
Then $G_{v_0}$, $G_{v_1}$, $G_{v_2}$, $G_{v_3}$ respectively
consist of all elements of $G$ of the form
$$
\left(
 \begin{smallmatrix}
 \gamma & 0 & 0 & 0 \\
 0 & \gamma & 0 & 0 \\
 0 & 0 & \delta & 0 \\
 0 & 0 & 0 & \epsilon 
 \end{smallmatrix}\right),
\qquad
\left(
 \begin{smallmatrix}
 \frac{1}{a} & 0 & 0 & 0 \\
 0 & a & 0 & 0 \\
 0 & 0 & \delta & 0 \\
 0 & 0 & 0 & \epsilon 
 \end{smallmatrix}\right),
\qquad
\left(
 \begin{smallmatrix}
 \frac{1}{a} & 0 & 0 & 0 \\
 0 & \frac{1}{a} & 0 & 0 \\
 0 & 0 & a & 0 \\
 0 & 0 & 0 & a 
 \end{smallmatrix}\right),
\qquad
\left(
 \begin{smallmatrix}
 \epsilon & 0 & 0 & 0 \\
 0 & \epsilon & 0 & 0 \\
 0 & 0 & \delta & 0 \\
 0 & 0 & 0 & a 
 \end{smallmatrix}\right),
$$
where $\gamma^2=\delta^2=\epsilon^2=1$ and $a\in\C$.
With these calculations,
it is 
straightforward to check
the conclusions of
Theorems \ref{thm:homomorphism}
and \ref{thm:ofhomomorphism} and Corollary
\ref{cor:ofhomomorphism2}.
\end{example}

\subsection{The structure of the Lie algebra}
\label{subsec:liealgebra}
We now summarize our results for the structure of the Lie algebra $\g$ of $G$.
In contrast to the terminology for groups,
the usual \emph{Levi decomposition} of $\g$
expresses $\g$ as the semidirect sum of the \emph{radical}
$\radg$,
defined as
the maximal solvable ideal,
and a semisimple \emph{Levi subalgebra};
these correspond to the Lie algebras
of $\Rad(G)$ and $[L,L]$, respectively.

\begin{proposition}
Let $(G,\rho,V)$ define a linear free divisor $D\subset V$.
Let $G$ have a Levi decomposition as above, and let
$\g$, $\radu$, and $\lfrak$ be the Lie algebras of $G$, $\Rad_u(G)$, and
$L$, respectively, and let $\z(\lfrak)$ denote the center of $\lfrak$.  Then
as vector spaces
$\g=\radu\oplus \z(\lfrak)\oplus [\lfrak,\lfrak]$
where 
the ideal $\radu$ consists of nilpotent elements,
$\z(\lfrak)$ is abelian and consists of semisimple elements,
$[\lfrak,\lfrak]$ is semisimple,
the ideal $[\g,\g]=\radu\oplus [\lfrak,\lfrak]$,
the ideal $\radu\oplus \z(\lfrak)$ equals the radical of $\g$,
and
$\lfrak=\z(\lfrak)\oplus [\lfrak,\lfrak]$.
We thus have:
$$
\begin{matrix}
[\radu,\radu]\subseteq \radu
 & [\radu,[\lfrak,\lfrak]]\subseteq \radu
 & [\radu,\z(\lfrak)]\subseteq \radu \\
 & [[\lfrak,\lfrak],[\lfrak,\lfrak]]=[\lfrak,\lfrak]
 & [[\lfrak,\lfrak],\z(\lfrak)]=0 \\
 & & [\z(\lfrak),\z(\lfrak)]=0.
\end{matrix}$$
\end{proposition}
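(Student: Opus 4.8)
The plan is to deduce every assertion from the group-level structure recorded in \S\ref{subsec:structureg} and in Corollary \ref{cor:ofhomomorphism2}, by passing through the standard correspondence between a connected complex algebraic group and its Lie algebra. Over $\C$ this correspondence sends closed connected subgroups to Lie subalgebras, normal subgroups to ideals, a semidirect product of groups to a semidirect sum of Lie algebras (so that the Lie algebra of the normal factor becomes an ideal), and---because we are in characteristic $0$---the derived group $[G,G]$ to the derived algebra $[\g,\g]$ (see \cite{borel} and \cite[\S3.3--3.4]{ov}).

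First I would record the identifications of the summands. By definition $\radu=L(\Rad_u(G))$ and $\lfrak=L(L)$, and $[\lfrak,\lfrak]=L([L,L])$ since $[L,L]$ is semisimple. Because $L$ is reductive, $\lfrak$ is a reductive Lie algebra and so splits as $\lfrak=\z(\lfrak)\oplus[\lfrak,\lfrak]$ with $\z(\lfrak)$ its center; a dimension count against $L=\gpcenter(L)^0\cdot[L,L]$ (with finite intersection) then identifies $\z(\lfrak)=L(\gpcenter(L)^0)$. Combined with the Levi decomposition $G=\Rad_u(G)\rtimes L$, this yields the vector-space decomposition $\g=\radu\oplus\lfrak=\radu\oplus\z(\lfrak)\oplus[\lfrak,\lfrak]$.

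Next I would translate the two remaining group identities. Applying $L(-)$ to the identity $[G,G]=\Rad_u(G)\rtimes[L,L]$ of Corollary \ref{cor:ofhomomorphism2}\eqref{enit:genbracket}, and using $L([G,G])=[\g,\g]$, gives $[\g,\g]=\radu\oplus[\lfrak,\lfrak]$; applying it to $\Rad(G)=\Rad_u(G)\rtimes\gpcenter(L)^0$ gives $\radg=\radu\oplus\z(\lfrak)$. The element-type claims come from the Jordan decomposition: $\Rad_u(G)$ is unipotent, so $\radu$ consists of nilpotent elements; $\gpcenter(L)^0$ is a torus, so $\z(\lfrak)$ consists of semisimple elements and is abelian; and $[\lfrak,\lfrak]$ is semisimple, inherited from $[L,L]$. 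The commutator table then needs no further computation: each bracket involving $\radu$ lies in $\radu$ because $\radu$ is an ideal, $[[\lfrak,\lfrak],[\lfrak,\lfrak]]=[\lfrak,\lfrak]$ because a semisimple Lie algebra is perfect, and $[[\lfrak,\lfrak],\z(\lfrak)]=0$ and $[\z(\lfrak),\z(\lfrak)]=0$ because $\z(\lfrak)$ is central in $\lfrak$ and hence abelian.

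The only step demanding genuine care is this translation: I must invoke the characteristic-$0$ identity $L([G,G])=[\g,\g]$ so that the derived-algebra claim is an equality rather than a mere inclusion, and I must verify $\z(\lfrak)=L(\gpcenter(L)^0)$ exactly, neither larger nor smaller. Both facts are standard, but each is essential for the decomposition to hold precisely as stated; everything else is a formal consequence of the group-level results already in hand.
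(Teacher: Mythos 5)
Your proposal is correct and follows essentially the same route as the paper's (much terser) proof: both deduce everything from the group-level Levi decomposition and Corollary \ref{cor:ofhomomorphism2}\eqref{enit:genbracket} via the standard characteristic-zero correspondence between connected algebraic groups and their Lie algebras, with the Jordan decomposition supplying the nilpotent/semisimple element claims. You have simply made explicit the steps (such as $L([G,G])=[\g,\g]$ and $\z(\lfrak)=L(\gpcenter(L)^0)$) that the paper leaves implicit under ``Most of this follows from the algebraic Levi decomposition.''
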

\begin{proof}
Most of this follows from the algebraic Levi decomposition of $G$.
Since 
$[L,L]$ is semisimple and $\Rad_u(G)$ is unipotent,
their Lie algebras are semisimple and nilpotent, respectively.
Since $\radu$ is nilpotent, all of its elements are nilpotent.
Then apply Corollary \ref{cor:ofhomomorphism2}\eqref{enit:genbracket}.
\end{proof}

\begin{remark}
Theorem 6.1 of \cite{gmns} describes a
normal form for a basis of the logarithmic vector
fields of a linear free divisor.
A key ingredient is a maximal subspace of simultaneously
diagonalizable linear logarithmic vector fields,
i.e., the vector fields corresponding to the
Lie algebra $\tfrak$ of a maximal torus. 
By Remark \ref{rem:boreltori}, $\tfrak$ may always be chosen to
contain $\z(\lfrak)$.
\end{remark}

\subsection{Some special cases}
\label{subsec:specialcases}

We now apply our results to several special types of linear free divisors.

\subsubsection{Abelian groups}
The \emph{normal crossings divisor} in a vector space $V$ is
given by the union of all coordinate hyperplanes for some choice of
vector space coordinates.
It is a linear free divisor,
and is the only linear free divisor defined by an abelian group.

\begin{corollary}[{\cite[Theorem 2.12]{freedivisorsinpvs}}]
Let $V$ be a finite-dimensional complex vector space and suppose that
a connected complex linear algebraic group $G\subseteq \GL(V)$ defines
a linear free divisor $D$ in $V$.
Then $G$ is abelian if and only if $D$ is
equivalent, under a change of coordinates in $V$, to the normal crossings divisor.
\end{corollary}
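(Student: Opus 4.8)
The plan is to prove each implication by combining the component count of Theorem~\ref{thm:ofhomomorphism}\eqref{enit:unique1} with the fact that $\dim(G)=\dim(V)=:n$, and then reading off a degree constraint on the basic relative invariants.

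For the forward direction, suppose $G$ is abelian, so that $[G,G]=\{e\}$. Then $\dim(G/[G,G])=\dim(G)=n$, and Theorem~\ref{thm:ofhomomorphism}\eqref{enit:unique1} gives $r=\dim(\g/[\g,\g])=n$; that is, $D$ has exactly $n$ irreducible components $V(f_1),\ldots,V(f_n)$. The key additional input I would invoke is a degree count: because $D$ is a linear free divisor, a reduced defining equation of $D$ is the determinant of the $n\times n$ Saito matrix of a basis of $\tau(\g)=\Derlog{V}{D}_0$, a polynomial homogeneous of degree $n$ (cf.\ Remark~\ref{rem:omegac} and \S\ref{subsec:lfdsintro}). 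Since this reduced equation is, up to a nonzero constant, the product $f_1\cdots f_n$, we get $\sum_{i=1}^n\deg(f_i)=n$; as each $f_i$ is a nonconstant homogeneous polynomial, this forces $\deg(f_i)=1$ for all $i$, so every $V(f_i)$ is a hyperplane. By Theorem~\ref{thm:pvs1} the $f_i$ are algebraically independent, hence the $n$ linear forms $f_1,\ldots,f_n$ are linearly independent and may be used as coordinates on $V$. In those coordinates $D=V(f_1\cdots f_n)$ is the union of the coordinate hyperplanes, i.e., the normal crossings divisor.

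For the converse, suppose that after a linear change of coordinates $D$ is the normal crossings divisor $x_1\cdots x_n=0$. Its irreducible components are the $n$ coordinate hyperplanes, so $r=n$, and Theorem~\ref{thm:ofhomomorphism}\eqref{enit:unique1} yields $\dim(G/[G,G])=r=n=\dim(G)$, whence $\dim([G,G])=0$. Since the commutator subgroup of a connected algebraic group is connected, $[G,G]=\{e\}$ and $G$ is abelian. (Alternatively, one notes directly that $\rho(G)=(\GL(V)_D)^0$ is the diagonal torus $(\Gm)^n$, which is abelian; then $[G,G]$ is a connected subgroup of the finite group $\ker(\rho)$, hence trivial.)

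The only genuinely nontrivial ingredient is the degree bookkeeping in the forward direction: one must know that the reduced equation of the linear free divisor has degree exactly $\dim(V)$, which is the content of Saito's description of $D$ as the vanishing of the determinant of a basis of $\tau(\g)$ together with Remark~\ref{rem:omegac}. Once the equality $r=\dim(V)$ is paired with this degree constraint, the conclusion that each component is a hyperplane, and the entire converse, are routine.
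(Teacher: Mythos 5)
Your proof is correct, but the forward direction takes a genuinely different route from the paper's. The paper argues group-theoretically: if $G$ is abelian, Theorem~\ref{thm:ofhomomorphism}\eqref{enit:unique5} gives $G\cong(\Gm)^n$, so $G$ is a maximal torus of $\GL(V)$, hence conjugate to the diagonal torus, and a direct calculation then identifies $D$ as the normal crossings divisor. You instead stay on the divisor side: from $r=n$ (Theorem~\ref{thm:ofhomomorphism}\eqref{enit:unique1}) and the degree bound $\sum_i\deg(f_i)\leq\dim(V)$ you force every component to be a hyperplane, and algebraic independence of the $f_i$ (Theorem~\ref{thm:pvs1}) turns the $n$ linear forms into coordinates. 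Note that you do not even need the full strength of Saito's criterion (reducedness of the Saito determinant): the fact recorded in \S\ref{subsec:lfdsintro}, that $D$ is cut out set-theoretically by the degree-$\dim(V)$ maximal minors of the coefficient matrix of $\tau(\g)$, already gives $f_1\cdots f_r\mid\det$ up to radical and hence the inequality $\sum_i\deg(f_i)\leq n$, which is all your bookkeeping requires. Your converse via the component count ($r=n=\dim(G/[G,G])$ forces $[G,G]$ trivial by connectedness) is also sound and differs from the paper's, which simply identifies $(\GL(V)_D)^0$ as the diagonal torus; your parenthetical alternative is exactly the paper's argument. The trade-off: the paper's approach yields the stronger structural conclusion that $G$ is a maximal torus and exhibits the coordinate change as a conjugation, while yours avoids invoking Theorem~\ref{thm:ofhomomorphism}\eqref{enit:unique5} and the conjugacy of maximal tori at the cost of the degree computation.
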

\begin{proof}
Let $n=\dim(G)=\dim(V)$.

If $D$ is the normal crossings divisor, then after choosing a basis of
$V$, $G$ is the diagonal group in $\GL(V)$, isomorphic to
$(\Gm)^{n}$.  Thus $G$ is abelian.

If $G$ is abelian,
then by Theorem \ref{thm:ofhomomorphism}\eqref{enit:unique1}, $D$ has
$\dim(G)=n$
irreducible components.
By Theorem \ref{thm:ofhomomorphism}\eqref{enit:unique5},
$G$ is isomorphic to $(\Gm)^n$, and hence is a maximal torus in
$\GL(V)$.
As all such tori are conjugate in $\GL(V)$,
we may choose coordinates on $V$ so that $G$ is the group of diagonal
matrices.
A calculation then shows that $D$ is the normal crossings divisor.
\end{proof}

\subsubsection{Irreducible linear free divisors}

We now examine the groups that
produce irreducible
linear free divisors.
%
Observe that if $G\subseteq \GL(V)$ defines a linear free divisor
$D\subset V$, then $\lambda\cdot I\in (\GL(V)_D)^0=G$ 
for $I$ the identity element and
$\lambda\in\C^*$.

Recall that an algebraic group $H$ is called \emph{perfect} if
$[H,H]=H$.  For instance, semisimple groups are perfect.

\begin{corollary}
\label{cor:irreducible}
Let $D\subset V$ be a linear free divisor defined by the group
$G\subseteq \GL(V)$.
Let $H=G\cap \SL(V)$, and let $K=(\C^*)\cdot I\subseteq G$.
The following are equivalent:
\begin{enumerate}
\item
\label{en:1component}
$D$ has $1$ irreducible component.
\item
\label{en:h0gg}
$H^0=[G,G]$.
\item
\label{en:gkgg}
$G=K\cdot [G,G]$.
\item
\label{en:h0perfect}
$H^0$ is perfect.
\item
\label{en:isperfect}
There exists a perfect connected codimension $1$ algebraic subgroup
$J$ of $G$.
\end{enumerate}
When these hold, $J=[G,G]=H^0$.
\end{corollary}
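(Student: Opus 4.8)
The plan is to put $n=\dim(G)=\dim(V)$ and first assemble a short toolkit, then read off all the equivalences from dimension counts among connected groups. Since $K=\C^*\cdot I\subseteq G$, the character $\det|_G\colon G\to\Gm$ is nontrivial (as $\det(\lambda I)=\lambda^n$), so $H=\ker(\det|_G)=G\cap\SL(V)$ has codimension $1$ and $\dim(H^0)=n-1$. Because every commutator has determinant $1$ and $[G,G]$ is connected, $[G,G]\subseteq H^0$; and since $K\cap\SL(V)\subseteq\{\lambda I:\lambda^n=1\}$ is finite, $K\cap[G,G]$ is finite. Finally, Theorem \ref{thm:ofhomomorphism}\eqref{enit:unique1} gives $\dim([G,G])=n-r$, where $r$ is the number of irreducible components of $D$ (and $r\geq 1$ as $D$ is a nonempty hypersurface).

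With this in hand, (1)$\Leftrightarrow$(2)$\Leftrightarrow$(3) should reduce to comparing dimensions. For (1)$\Leftrightarrow$(2): $[G,G]$ and $H^0$ are closed connected groups with $[G,G]\subseteq H^0$, so $[G,G]=H^0$ iff $\dim([G,G])=n-1$ iff $r=1$. For (1)$\Leftrightarrow$(3): because $[G,G]\nsub G$ and $K$ is a subgroup, $K\cdot[G,G]$ is a closed connected subgroup, of dimension $\dim(K)+\dim([G,G])-\dim(K\cap[G,G])=1+(n-r)$; hence it equals the connected group $G$ iff $r=1$.

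The step I expect to be the crux, (3)$\Rightarrow$(4), is in fact short once the \emph{centrality} of $K$ is exploited. Writing $G=K\cdot[G,G]$, any commutator $[g_1,g_2]$ with $g_i=k_i n_i$ (where $k_i\in K$, $n_i\in[G,G]$) equals $[n_1,n_2]$, since the central factors $k_i$ cancel. Thus $[G,G]=[[G,G],[G,G]]$, i.e.\ $[G,G]$ is perfect; combining with (1)$\Leftrightarrow$(2) gives that $H^0=[G,G]$ is perfect, which is (4). The remaining implications are routine bookkeeping: (4)$\Rightarrow$(2) since a perfect $H^0$ satisfies $H^0=[H^0,H^0]\subseteq[G,G]\subseteq H^0$; (4)$\Rightarrow$(5) by taking $J=H^0$; and (5)$\Rightarrow$(1) since a perfect $J$ satisfies $J=[J,J]\subseteq[G,G]$, so $n-1=\dim(J)\le\dim([G,G])=n-r$ forces $r\le 1$, hence $r=1$. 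These close the cycle and make (1)--(5) equivalent.

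For the final assertion, under the equivalent conditions $r=1$, so (1)$\Leftrightarrow$(2) yields $[G,G]=H^0$; and for any $J$ as in (5), perfectness gives $J\subseteq[G,G]$ with $\dim(J)=n-1=\dim([G,G])$, whence $J=[G,G]$ by connectedness, so $J=[G,G]=H^0$. The only genuinely delicate points I will need to justify are that $K\cdot[G,G]$ is a closed subgroup and that equal dimension of nested closed connected subgroups forces equality; both are standard for linear algebraic groups, since $K$ normalizes the normal subgroup $[G,G]$ and a connected closed subgroup of full dimension is the whole connected group.
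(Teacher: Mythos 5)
Your proposal is correct and follows essentially the same route as the paper: both arguments rest on $\dim_\C([G,G])=\dim(V)-r$ from Theorem \ref{thm:ofhomomorphism}\eqref{enit:unique1}, the containment $[G,G]\subseteq H^0$ with $\dim(H^0)=\dim(V)-1$, the centrality of $K$ to get $[K\cdot[G,G],K\cdot[G,G]]=[[G,G],[G,G]]$, and connectedness to upgrade equal dimensions of nested subgroups to equality. The only cosmetic differences are that you obtain $\dim(H)=\dim(V)-1$ from the nontriviality of $\det|_G$ rather than from the factorization $G=K\cdot H$, and you organize the implications as a web of equivalences rather than the paper's single cycle.
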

\begin{proof}
Clearly $G=K\cdot H$, and there are a finite number of ways to write
$g\in G$ as a product of elements of $K$ and $H$.
It follows that $\dim(H)=n-1$.
The multiplication morphism $K\times H^0\to G$ has a connected image
of dimension $\dim(G)$, and hence 
$G=K\cdot H^0$.

If
\eqref{en:1component}, then
by Theorem \ref{thm:ofhomomorphism}\eqref{enit:unique1}, 
$\dim([G,G])=n-1$.
Since $[G,G]\subseteq H$ are of the same dimension
and $[G,G]$ is connected, we have $[G,G]=H^0$ and \eqref{en:h0gg}.

If \eqref{en:h0gg}, then by the above work, $G=K\cdot [G,G]$, giving
\eqref{en:gkgg}.

Suppose \eqref{en:gkgg}.
Since $[G,G]\subseteq H$ we have
$\dim([G,G])\leq n-1$.
Also, 
$\dim(G)\leq \dim(K)+\dim([G,G])$
shows $\dim([G,G])\geq n-1$, and hence
$\dim([G,G])=n-1$. 
Since they are connected and of the same dimension, $[G,G]=H^0$.
Since $K$ is in the center of $G$,
we have $[K\cdot N,K\cdot M]=[N,M]$ for any subgroups $N$ and $M$ of $G$.
In particular,
\begin{equation*}
H^0=[G,G]=[K\cdot [G,G],K\cdot [G,G]]=[[G,G],[G,G]]=[H^0,H^0],
\end{equation*}
giving \eqref{en:h0perfect}.

If \eqref{en:h0perfect}, then since $\dim(H^0)=n-1$,
we have \eqref{en:isperfect}.

If \eqref{en:isperfect}, then
since
$J=[J,J]\subseteq [G,G]\subseteq H$ and
$\dim(H)=n-1$,
we have $\dim([G,G])=n-1$.
By Theorem \ref{thm:ofhomomorphism}\eqref{enit:unique1},
$D$ has $1$ irreducible component, proving \eqref{en:1component}.
Finally, note that $J\subseteq [G,G]\subseteq H^0$ are all
connected algebraic groups of the same dimension, hence equal.
\end{proof}

\begin{remark}
The case when $H$ is semisimple was thoroughly explored
in \cite{freedivisorsinpvs};
by the Levi decomposition of $G$, this is equivalent to
$\dim(\gpcenter(L))=1$ and $\Rad_u(G)=\{e\}$.
Are there other irreducible linear free divisors, with $H^0$ perfect and
$\Rad_u(G)\neq \{e\}$?
Since an irreducible representation that is a
prehomogeneous vector space has 
$H$ semisimple
by \cite[Theorem 7.21]{kimura}, 
in such an example the representation must be reducible.
\end{remark}

\subsubsection{Reductive groups}

For reductive groups, we have the following.
\begin{corollary}[{\cite[Lemma 2.6]{freedivisorsinpvs}}]
\label{cor:reductive}
For a linear free divisor $D$ defined by a reductive group $G$, the number of
irreducible components of $D$ equals the dimension of the center of
$G$.
Let $H$ be the subgroup of $G$ leaving invariant the level sets of
the product
$f_1\cdots f_r$.
Then
$D$ is irreducible if and only if $H^0$ is semisimple.
\end{corollary}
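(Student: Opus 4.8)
The plan is to read off both assertions from the structural results of Theorem~\ref{thm:ofhomomorphism} and Corollary~\ref{cor:ofhomomorphism2}, specialized to the reductive case in which $\Rad_u(G)=\{e\}$ and the Levi subgroup $L$ is all of $G$.

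For the count of components I would argue as follows. Since $G$ is reductive, its Levi decomposition has $\Rad_u(G)=\{e\}$ and $L=G$, so $\gpcenter(L)=\gpcenter(G)$. Corollary~\ref{cor:ofhomomorphism2}\eqref{enit:gencomponents} then immediately gives that the number of irreducible components of $D$ equals $\dim(\gpcenter(L))=\dim(\gpcenter(G))$.

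Next I would identify $H$ algebraically. Writing $\chi=\chi_1\cdots\chi_r$, so that $f_1\cdots f_r\associatedmult\chi$, equation~\eqref{eqn:relinv} shows that $g$ fixes every level set of $f_1\cdots f_r$ precisely when $\chi(g)=1$; hence $H=\ker(\chi)$. By Theorem~\ref{thm:homomorphism}\eqref{enit:1} the map $(\chi_1,\ldots,\chi_r)$ surjects onto $(\Gm)^r$, and composing with the product map shows $\chi$ is a nontrivial character, so $\dim(H^0)=\dim(H)=\dim(G)-1$. I would also record two facts from the reductive specialization: $[G,G]=[L,L]$ is semisimple (Corollary~\ref{cor:ofhomomorphism2}\eqref{enit:genbracket} with $\Rad_u(G)=\{e\}$), and $\dim([G,G])=\dim(G)-r$ (Theorem~\ref{thm:ofhomomorphism}\eqref{enit:unique1}). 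Finally, being connected and contained in the kernel of every character, $[G,G]\subseteq H^0$.

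With these in place the equivalence is a short dimension count. If $D$ is irreducible, then $r=1$, so $\dim([G,G])=\dim(G)-1=\dim(H^0)$; since $[G,G]\subseteq H^0$ are connected of equal dimension they coincide, and $H^0=[G,G]$ is semisimple. Conversely, if $H^0$ is semisimple then $H^0=[H^0,H^0]\subseteq[G,G]\subseteq H^0$, forcing $H^0=[G,G]$ and hence $\dim(G)-1=\dim(G)-r$, i.e.\ $r=1$ and $D$ irreducible. I do not expect a serious obstacle; the only points needing care are confirming that $\chi_1\cdots\chi_r$ is nontrivial, so that $H^0$ really has codimension one, and that the semisimplicity (equivalently, perfectness) of $H^0$ legitimately upgrades the inclusion $[G,G]\subseteq H^0$ to an equality.
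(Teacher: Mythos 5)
Your proof is correct and follows essentially the same route as the paper: Corollary~\ref{cor:ofhomomorphism2}\eqref{enit:gencomponents} for the component count, the identification $H=\ker(\chi_1\cdots\chi_r)$ of codimension one via Theorem~\ref{thm:homomorphism}\eqref{enit:1}, and the perfectness of a semisimple $H^0$ for the equivalence. The only cosmetic differences are that you replace the paper's appeal to Theorem~\ref{thm:ofhomomorphism}\eqref{enit:unique3} in the forward direction by a dimension count, and you inline the relevant implication of Corollary~\ref{cor:irreducible} rather than citing it.
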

\begin{proof}
Let $G$ have a Levi decomposition.
Since $\Rad_u(G)$ is trivial and $G$ is itself a Levi subgroup,
apply Corollary \ref{cor:ofhomomorphism2}\eqref{enit:gencomponents}
to get the first statement.

If $D$ is irreducible, then $r=1$ and
so by Theorem \ref{thm:ofhomomorphism}\eqref{enit:unique3},
$H=[G,G]\cdot G_{v_0}$.  In particular, $H^0=[G,G]$,
and this is semisimple by the structure theory.

Conversely, suppose $H^0$ is semisimple.
Since $H=\ker(\chi_1\cdots \chi_r)$,
by Theorem \ref{thm:homomorphism}\eqref{enit:1},
$H^0$ has codimension $1$ in $G$.
Since $H^0$ is perfect, by Corollary \ref{cor:irreducible},
$D$ is irreducible.
\end{proof}

Granger--Mond--Schulze also show (\cite[Theorem
2.7]{freedivisorsinpvs})
that for a linear free divisor $D\subset V$ defined by a reductive
group $G$,
the number of irreducible hypersurface components of $D$ equals
the number of irreducible $G$--modules in $V$. 

\begin{example}
Let $D\subset V$ be a linear free divisor constructed from a
\emph{quiver} $Q$,
a finite connected oriented graph with vertex set $Q_0$,
edge set $Q_1$, and a dimension vector $d:Q_0\to \N$
(see \cite{gmns,mondbuchweitz,freedivisorsinpvs}).
Here, the group $G$ is a product over $Q_0$ of general linear
groups,
so $G$ is reductive with $\dim(\gpcenter(G))=|Q_0|$,
and $V$ is the space of representations of $(Q,d)$.
When $d$ is a real Schur root and $Q$ has no oriented cycles, then
$G$ has an open orbit and
a theorem of Kac states that
the complement has
$|Q_0|-1$ irreducible hypersurface components
(\cite[\S4]{gmns}).
The apparent disagreement with 
Corollary \ref{cor:reductive} is resolved by  
observing that $G$ does not define $D$ in the sense of
Definition \ref{def:defineslfd} as
the representation $\rho$ of $G$ has a
$1$-dimensional kernel contained in $\gpcenter(G)$,
whereas $\rho(G)$
defines $D$ and
is
reductive with center of dimension $|Q_0|-1$.
\end{example}

\subsubsection{Solvable groups}

%
Recall that by 
the Lie--Kolchin Theorem,
any solvable linear algebraic group $G\subset \GL(V)$
has a basis of $V$
in which 
$G$ is lower triangular.

\begin{corollary}
\label{cor:solvable}
Let $D\subset V$ be a linear free divisor defined by a solvable group
$G\subseteq \GL(V)$.
Fix any basis which makes $G$ lower triangular, and
let $\phi:G\to (\Gm)^{\dim(V)}$ send $g$ to the diagonal entries of
$g$. 
Then:
\begin{enumerate}
\item
\label{enit:solvablenum}
The number of components of $D$ equals the dimension of the maximal torus
of $G$, and also $\dim(\phi(G))$.
\item
\label{enit:solvablehyp}
$D$ has a hyperplane component.
\item
\label{enit:solvableunip}
Let $G_u$ be the subgroup consisting of unipotent elements of $G$.
Then $[G,G]=G_u=\ker(\phi)$.
\item 
\label{enit:solvablefactor}
Every $\chi\in X_1(G)$ factors through $\phi$.
\end{enumerate}
\end{corollary}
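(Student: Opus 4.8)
The plan is to reduce the three group-theoretic assertions \eqref{enit:solvablenum}, \eqref{enit:solvableunip}, and \eqref{enit:solvablefactor} to the single identification $[G,G] = G_u = \ker(\phi)$, and then to obtain the hyperplane component in \eqref{enit:solvablehyp} separately by producing an explicit linear relative invariant from the triangular form.

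First I would record the structure of the connected (Definition \ref{def:defineslfd}) solvable group $G$: it is a semidirect product $G = T \ltimes G_u$ of a maximal torus $T$ with its subgroup $G_u = \Rad_u(G)$ of unipotent elements. In a basis making $G$ lower triangular the eigenvalues of any $g \in G$ are its diagonal entries $\phi(g)$, so $g$ is unipotent exactly when $\phi(g) = (1, \ldots, 1)$; hence $\ker(\phi) = G_u$. Since the diagonal of a product of triangular matrices is the entrywise product of the diagonals, $\phi$ is a homomorphism, and $\phi(G) \cong G/G_u$ is a closed connected subgroup of $(\Gm)^{\dim(V)}$, hence a torus. For \eqref{enit:solvableunip}, Theorem \ref{thm:ofhomomorphism}\eqref{enit:unique4} shows that $[G,G]$ contains every unipotent element, giving $G_u \subseteq [G,G]$, while $G/G_u \cong \phi(G)$ is abelian, forcing $[G,G] \subseteq G_u$; thus $[G,G] = G_u = \ker(\phi)$. (Alternatively this is Corollary \ref{cor:ofhomomorphism2}\eqref{enit:genbracket}, once one notes that for solvable $G$ the Levi subgroup is the torus $T$.)

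With this identification the remaining two algebraic claims are immediate. For \eqref{enit:solvablenum}, Theorem \ref{thm:ofhomomorphism}\eqref{enit:unique1} computes the number of components as $\dim(G/[G,G]) = \dim(G/G_u) = \dim(T)$ (equivalently, Corollary \ref{cor:ofhomomorphism2}\eqref{enit:gencomponents} with $\gpcenter(L) = T$), and because $\phi(G) \cong G/\ker(\phi) = G/G_u$ this number is also $\dim(\phi(G))$. For \eqref{enit:solvablefactor}, any $\chi \in X_1(G)$ is a character of $G$, so $[G,G] \subseteq \ker(\chi)$ as $\Gm$ is abelian; since $\ker(\phi) = [G,G] \subseteq \ker(\chi)$ and $\phi$ is surjective onto $\phi(G)$, the character $\chi$ factors through $\phi$.

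The part requiring a genuinely new observation is the hyperplane component \eqref{enit:solvablehyp}, which I expect to be the main obstacle --- not because it is technically hard, but because it is the only step that must extract an actual defining equation of a component rather than merely manipulate the subgroups $[G,G]$, $G_u$, and $\ker(\phi)$. Here I would exploit the triangular form directly: the first coordinate function obeys $x_1(\rho(g)(v)) = g_{11}\, x_1(v)$, where $g \mapsto g_{11}$ is a rational character of $G$, so $x_1$ is a nonconstant relative invariant. As $x_1$ is linear, hence irreducible, Theorem \ref{thm:pvs1} forces it to be a constant multiple of one of the basic relative invariants $f_i$, so that $V(x_1) = V(f_i)$ is an irreducible hyperplane component of $D$. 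The key realization is that lower-triangularity hands us a linear semi-invariant for free, and irreducibility then promotes it to a basic relative invariant.
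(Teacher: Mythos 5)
Your proposal is correct and follows essentially the same route as the paper: both reduce (1), (3), (4) to the identification $[G,G]=G_u=\ker(\phi)$ via the solvable Levi decomposition together with Theorem \ref{thm:ofhomomorphism}\eqref{enit:unique4}, and both locate the hyperplane component in the first coordinate of the triangularizing basis (the paper phrases this as the Lie--Kolchin invariant hyperplane lying inside $D$, while you equivalently observe that $x_1$ is a linear relative invariant and invoke Theorem \ref{thm:pvs1}). The only point where the paper is slightly more careful is \eqref{enit:solvablefactor}: your argument produces the factoring map only on the subtorus $\phi(G)\subseteq(\Gm)^{\dim V}$, and the paper adds the (standard) step of extending that character to the ambient torus via \cite[8.2]{borel}.
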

\begin{proof}
At first we proceed without the hypothesis that $G$ defines a linear
free divisor.
Since $G$ is connected and solvable, the Levi decomposition of $G$ has
$\Rad(G)=G$, $\Rad_u(G)=G_u$, $L$ is a
maximal torus of $G$, and
in this case $[G,G]\subseteq G_u$. 
Note that $\phi$ is a homomorphism of linear algebraic groups, and by
the definition of unipotent, $G_u=\ker(\phi)$.
Then since $\dim(G)=\dim(G_u)+\dim(L)=\dim(\ker(\phi))+\dim(L)$,
we have
\begin{equation}
\label{eqn:imphistar}
\dim(L)=\dim(\phi(G)).
\end{equation}

Now let $\chi\in X(G)$.  By the Jordan decomposition, $G_u\subseteq
\ker(\chi)$, and hence $\ker(\phi)\subseteq \ker(\chi)$.
If $\overline{\chi}$ and $\overline{\phi}$ are the induced
homomorphisms on $G/G_u$, then the homomorphism
$\lambda=\overline{\chi}\circ (\overline{\phi})^{-1}:\phi(G)\to \Gm$
satisfies $\chi=\lambda\circ \phi$.
Since
$\lambda$
is a character on a subtorus of $(\Gm)^{\dim(V)}$,
by \cite[8.2]{borel}
$\lambda$ extends to a character $\psi:(\Gm)^{\dim(V)}\to \Gm$
with $\chi=\psi\circ \phi$.
This proves \eqref{enit:solvablefactor}.

Now assume that $G$ defines a linear free divisor $D$.
By Corollary \ref{cor:ofhomomorphism2}\eqref{enit:gencomponents},
the number of components of $D$ equals $\dim(\gpcenter(L))$.
Then the observation that $L=\gpcenter(L)$ and \eqref{eqn:imphistar} implies
\eqref{enit:solvablenum}.
By Theorem \ref{thm:ofhomomorphism}\eqref{enit:unique4} we have
$G_u\subseteq [G,G]$, and hence $[G,G]=G_u=\ker(\phi)$, proving
\eqref{enit:solvableunip}.

Finally, 
the Lie--Kolchin Theorem guarantees an invariant complete flag in $V$,
hence an invariant hyperplane $H$.
As $H$ cannot
intersect $\Omega$, $H$ is a component of $D$, proving
\eqref{enit:solvablehyp}.
In these lower-triangular coordinates, this $H$ is defined by the
``first coordinate''.
\end{proof}

\begin{remark}
By Corollary \ref{cor:solvable}\eqref{enit:solvablefactor}, the characters
corresponding to the basic relative invariants
are functions of the
diagonal entries.
\end{remark}

\subsection{Degrees of the components}
\label{subsec:whatelse}
Let $G$ define a linear free divisor $D$
with basic relative invariants $f_1,\ldots,f_r$.
We have seen that $r$
may be computed from
the Lie algebra $\g$ of $G$;
may the degrees of the $f_i$ be computed from $\g$?

If we only use the abstract Lie algebra structure of $\g$, then the
answer is no:
\begin{example}
Consider the following two linear free divisors in $\C^5$:
\begin{align*}
D_1:\quad & (x_3 x_5-x_4^2)\begin{vmatrix} 0 & x_1 & x_2 \\ x_1 & x_3 & x_4
\\ x_2 & x_4 & x_5 \end{vmatrix}=0, \\
\text{and }D_2:\quad &
(x_2^2x_3^2-4x_1x_3^3-4x_2^3x_4+18x_1x_2x_3x_4-27x_4^2x_1^2)x_5=0.
\end{align*}
This $D_1$ is \cite[Example 9.4]{DP-matrixsingI},
while $D_2$ is the product-union of a hyperplane with
\cite[Theorem 2.11(2)]{freedivisorsinpvs}.
The degrees of the polynomials defining the
irreducible components of $D_1$ and $D_2$
differ.
However,
the groups defining $D_1$ and $D_2$ have the same abstract Lie algebra
structure:
$\gl_2(\C)\oplus \gl_1(\C)$.
Thus, $D_1$ and $D_2$ are constructed from inequivalent
representations of the same abstract Lie algebra.
%
%
%
%
\end{example}

Of course, since 
the representation of the Lie algebra determines the divisor,
the representation undoubtedly
contains the information necessary to compute these degrees.
How may we do so effectively?

\begin{lemma}
\label{lemma:degrees}
Let $G\subseteq \GL(V)$ define the linear free divisor $D$ with
basic relative invariants $f_1,\ldots,f_r$, with $f_i\associatedmult
\chi_i$.
Choose $X_i\in\g\subseteq \gl(V)$ such that $d(\chi_i)_{(e)}(X_j)=\delta_{ij}$, or
equivalently, $\xi_{X_i}(f_j)=\delta_{ij} f_j$.
Let $I$ denote the identity endomorphism in
both $G\subseteq \GL(V)$ and
$\g\subseteq\gl(V)$, with $\xi_I$ the Euler
vector field.
Let the module $A\subseteq \Der_V$ consist of the vector fields
annihilating all $f_i$.
Then:
\begin{align}
\xi_I(f_i)&=\deg(f_i) f_i,
  \label{eqn:degree3} \\
d(\chi_i)_{(e)}(I)&=\deg(f_i),
  \label{eqn:degree0} \\
\chi_i(\lambda\cdot I)&=\lambda^{\deg(f_i)}\quad
  \text{for all $\lambda\in\C^*$},
  \label{eqn:degree4} \\
I&=\sum_{j=1}^r \deg(f_j) X_j\bmod{[\g,\g]},
  \label{eqn:degree1} \\
\text{and }
\xi_I&=\sum_{j=1}^r \deg(f_j) \xi_{X_j}\bmod{A}.
  \label{eqn:degree2}
\end{align}
\end{lemma}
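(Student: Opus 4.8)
The plan is to derive all five identities from Euler's formula, from the differentiated relative-invariant identity \eqref{eqn:diffrelinv}, and from the decomposition of $\g$ in Corollary \ref{cor:homomorphismliealgebra}. First I would establish \eqref{eqn:degree3}. The identity endomorphism $I\in\gl(V)$ exponentiates to $\exp(tI)=e^t\cdot I$, which acts on $V$ by scaling $v\mapsto e^t v$ (recall that $\lambda\cdot I\in G$ for every $\lambda\in\C^*$), so the associated vector field is $\xi_I(v)=\frac{d}{dt}(e^t v)\big|_{t=0}=v$; that is, $\xi_I$ is the Euler vector field. Euler's identity for the homogeneous polynomial $f_i$ then gives \eqref{eqn:degree3}.

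Next, \eqref{eqn:degree0} follows by applying \eqref{eqn:diffrelinv} with $X=I$ and $f=f_i$, which yields $\xi_I(f_i)=d(\chi_i)_{(e)}(I)\cdot f_i$; comparing with \eqref{eqn:degree3} and cancelling the nonzero $f_i$ forces $d(\chi_i)_{(e)}(I)=\deg(f_i)$. For \eqref{eqn:degree4} I would instead use the global equation \eqref{eqn:relinv} with $g=\lambda\cdot I$: since this $g$ acts by scaling, $f_i(\lambda v)=\chi_i(\lambda\cdot I)\,f_i(v)$, while homogeneity gives $f_i(\lambda v)=\lambda^{\deg(f_i)}f_i(v)$; equating these and cancelling $f_i(v)$ at a point where it is nonzero proves \eqref{eqn:degree4}. (One could alternatively exponentiate \eqref{eqn:degree0}.)

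The remaining two identities are linear-algebra consequences of the splitting $\g=[\g,\g]\oplus\bigoplus_{j=1}^r \C X_j$ of Corollary \ref{cor:homomorphismliealgebra}, in which each functional $d(\chi_i)_{(e)}$ vanishes on $[\g,\g]$ and satisfies $d(\chi_i)_{(e)}(X_j)=\delta_{ij}$. Hence every $Y\in\g$ satisfies $Y\equiv\sum_{j}d(\chi_j)_{(e)}(Y)\,X_j\pmod{[\g,\g]}$; taking $Y=I$ and substituting \eqref{eqn:degree0} yields \eqref{eqn:degree1}. Finally, applying the linear map $\tau\colon X\mapsto\xi_X$ to \eqref{eqn:degree1}, and using that $\tau([\g,\g])\subseteq A$ — which holds because $\xi_X(f_i)=0$ for all $X\in[\g,\g]$ and all $i$ by the last assertion of Corollary \ref{cor:homomorphismliealgebra} — gives \eqref{eqn:degree2}.

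The only point that requires genuine care is the identification of $\xi_I$ with the Euler vector field underlying \eqref{eqn:degree3}: this rests on the fact that every homothety $\lambda\cdot I$ lies in $G=(\GL(V)_D)^0$ and acts by scaling, so that $\xi_I$ is radial and Euler's theorem applies. Everything else is a direct application of \eqref{eqn:diffrelinv}, homogeneity of the $f_i$, and the already-established splitting of $\g$, so I do not anticipate a real obstacle.
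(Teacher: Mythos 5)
Your proposal is correct and follows essentially the same route as the paper: Euler's relation for \eqref{eqn:degree3}, the differentiated identity \eqref{eqn:diffrelinv} for \eqref{eqn:degree0}, and the fact that $\bigcap_i\ker(d(\chi_i)_{(e)})=[\g,\g]$ for \eqref{eqn:degree1} and \eqref{eqn:degree2}. The only cosmetic differences are that the paper obtains \eqref{eqn:degree4} by integrating \eqref{eqn:degree0} (your parenthetical alternative) and verifies \eqref{eqn:degree2} by checking directly that $\xi_I-\sum_j\deg(f_j)\xi_{X_j}$ annihilates each $f_i$, rather than pushing \eqref{eqn:degree1} forward under $\tau$.
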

\begin{proof}
Corollary \ref{cor:homomorphismliealgebra} shows that such $X_i$
exist.
By \eqref{eqn:diffrelinv}, the $\xi_{X_i}$ have the claimed effects on $f_j$ for
all $i,j$.
Then \eqref{eqn:degree3} is just the Euler relation,
and applying \eqref{eqn:diffrelinv} shows 
\eqref{eqn:degree0}.
Integrating \eqref{eqn:degree0} shows \eqref{eqn:degree4}.
By \eqref{eqn:degree0}, for each $i$ we have
$$d(\chi_i)_{(e)}\Big(I-\sum_{j=1}^r \deg(f_j)
X_j\Big)=\deg(f_i)-\deg(f_i)=0,$$
and since $\cap_{i=1}^r \ker(d(\chi_i)_{(e)})=[\g,\g]$ by
Theorem \ref{thm:homomorphism}\eqref{enit:1},
we have \eqref{eqn:degree1}.
A similar argument using \eqref{eqn:degree3}
shows that $\xi_I-\sum_{j=1}^r \deg(f_j) \xi_{X_j}$ annihilates each $f_i$,
giving \eqref{eqn:degree2}.
\end{proof}

It is unclear whether the embedding $\g\subseteq \gl(V)$ may be used
to find the degrees without first finding either the $\chi_i$,
$d(\chi_i)_{(e)}$, or $f_i$.

\begin{remark}
For $d\in\N$ let $R_d\subseteq \Gm$ denote the group of $d$th roots of
unity.  Then in the situation of Lemma \ref{lemma:degrees}
with $G\subseteq \GL(V)$,
\eqref{eqn:degree4} implies that
$$
\ker(\chi_i|_{\Gm\cdot I})=\ker(\chi_i)\cap (\Gm\cdot I)
 = R_{\deg(f_i)}\cdot I.$$
Then by Theorem \ref{thm:homomorphism}\eqref{enit:1} and number
theory,
$([G,G]\cdot G_{v_0})\cap (\Gm\cdot I)=R_{d}\cdot I$ for
$d$ the greatest common divisor of 
$\{\deg(f_1),\ldots,\deg(f_r)\}$.
\end{remark}

\subsection{Homotopy groups of \texorpdfstring{$V\setminus D$}{V-D}}
\label{subsec:homotopy}
The results above
can
give some insight into the topology of the complement of a linear free
divisor $D$ 
and two types of (global) Milnor fiber associated to $D$.

\begin{proposition}
\label{prop:homotopy}
Let $G\subseteq \GL(V)$ define a linear free divisor $D\subset V$ with
irreducible components defined by $f_1,\ldots,f_r$,
and $f_i\associatedmult \chi_i$.
Let $L$ be a Levi factor of $G$.
Let $v_0\in\Omega$, let $F=(f_1,\ldots,f_r):V\to\C^r$, let
$K$ be the fiber of $f_1\cdots f_r$ containing $v_0$,
and let $P\subset K$ be the fiber of $F$ containing $v_0$. 
Use $v_0$ or $e\in G$ as the base point for all homotopy groups.
Then
for $n>1$,
$$
\pi_n(L)\cong
\pi_n([L,L])\cong
\pi_n(G)\cong
\pi_n([G,G])\cong
\pi_n(P)\cong
\pi_n(\Omega)\cong
\pi_n(K),$$
and for $n=1$ we have the exact
diagrams in 
Figure \ref{fig:pi1}.
\end{proposition}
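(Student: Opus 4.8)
The plan is to reduce every space in the list either to an algebraic group or to a homogeneous space, and then to compare homotopy groups using three mechanisms: contractibility of the unipotent radical, finite isotropy coverings, and central‑torus coverings (together with fibrations whose base is a torus). All the structural input I need is already in hand: the Levi decomposition $G=\Rad_u(G)\rtimes L$ with $[G,G]=\Rad_u(G)\rtimes[L,L]$ and $G=[G,G]\cdot\gpcenter(L)^0$ (Corollary \ref{cor:ofhomomorphism2}), the surjection $(\chi_1,\ldots,\chi_r)\colon G\to(\Gm)^r$ with kernel $[G,G]\cdot G_{v_0}$ (Theorem \ref{thm:homomorphism}\eqref{enit:1}), the isomorphism $G/[G,G]\cong(\Gm)^r$ (Theorem \ref{thm:ofhomomorphism}\eqref{enit:unique5}), and the finiteness of $G_{v_0}$ (Theorem \ref{thm:brion}). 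Note also that $\gpcenter(L)^0$, being a central torus of a reductive group, is isomorphic to $(\Gm)^r$.

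First I would record the relevant fibrations and coverings. The projection $G\to G/\Rad_u(G)=L$ is a fiber bundle whose fiber $\Rad_u(G)$ is a unipotent group, hence isomorphic to an affine space and contractible; so the inclusion $L\hookrightarrow G$ is a homotopy equivalence, and likewise $[L,L]\hookrightarrow[G,G]$ is one via $[G,G]=\Rad_u(G)\rtimes[L,L]$. The multiplication map $[L,L]\times\gpcenter(L)^0\to L$ is a finite central covering, with kernel the finite group $[L,L]\cap\gpcenter(L)^0$; since $\pi_n((\Gm)^r)=0$ for $n\ge 2$, it gives $\pi_n(L)\cong\pi_n([L,L])$ for $n>1$. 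Because $G_{v_0}$ and $[G,G]\cap G_{v_0}$ are finite, the orbit maps $G\to\Omega=G/G_{v_0}$ and $[G,G]\to P$ are finite coverings. Here I would identify the fibers on $\Omega$: evaluating $f_i(\rho(g)(v_0))=\chi_i(g)f_i(v_0)$ shows that $F$ is, up to translation, the orbit map $G/G_{v_0}\to G/([G,G]\cdot G_{v_0})\cong(\Gm)^r$ with fiber the orbit $P=[G,G]\cdot v_0$, and that $f_1\cdots f_r$ is, up to a scalar, the orbit map $G/G_{v_0}\to G/G'\cong\Gm$ with $G'=\ker(\chi_1\cdots\chi_r)$ and fiber $K=G'\cdot v_0$. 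These are homogeneous‑space fiber bundles $P\to\Omega\to(\Gm)^r$ and $K\to\Omega\to\Gm$; as $\pi_n$ of a torus vanishes for $n\ge 2$, their long exact sequences yield $\pi_n(P)\cong\pi_n(\Omega)\cong\pi_n(K)$ for $n>1$. Chaining all these comparisons establishes every isomorphism in the $n>1$ display.

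A point requiring care is the connectedness of $K$, needed both for the fibration bookkeeping and for the $n=1$ statement. I would argue that $G'$ contains $[G,G]\cdot G_{v_0}$ with quotient $\ker\bigl((\Gm)^r\xrightarrow{\mathrm{prod}}\Gm\bigr)\cong(\Gm)^{r-1}$, which is connected; since this quotient is connected and $G_{v_0}$ is finite, $(G')^0$ surjects onto it, so $G'=(G')^0\cdot G_{v_0}$ and hence $K=G'\cdot v_0=(G')^0\cdot v_0$ is connected. The spaces $\Omega$, $P$, $G$, $[G,G]$, $L$, $[L,L]$ are visibly connected.

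For $n=1$ I would feed the same fibrations into the degree‑one portion of their exact sequences, where the tori contribute $\pi_1((\Gm)^r)=\Z^r$ (resp.\ $\pi_1(\Gm)=\Z$) and the finite fibers contribute $G_{v_0}$, $[G,G]\cap G_{v_0}$, and $[L,L]\cap\gpcenter(L)^0$. Explicitly, the coverings give $1\to\pi_1([G,G])\to\pi_1(P)\to[G,G]\cap G_{v_0}\to 1$, $\;1\to\pi_1(G)\to\pi_1(\Omega)\to G_{v_0}\to 1$, and $1\to\pi_1([L,L])\times\Z^r\to\pi_1(L)\to[L,L]\cap\gpcenter(L)^0\to 1$; the torus fibrations give $0\to\pi_1(P)\to\pi_1(\Omega)\to\Z^r\to 0$ and $0\to\pi_1(K)\to\pi_1(\Omega)\to\Z\to 0$; and the homotopy equivalences give $\pi_1(G)\cong\pi_1(L)$, $\pi_1([G,G])\cong\pi_1([L,L])$. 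The remaining—and most delicate—task is to check that these maps are mutually compatible (they are all induced by the evident inclusions and quotient homomorphisms, so the squares built from the multiplication map $[L,L]\times\gpcenter(L)^0\to L$, the orbit maps, and $F$ commute) and then to assemble them into the exact commutative diagrams of Figure \ref{fig:pi1}. I expect this diagram‑chasing, together with the connectedness argument for $K$, to be the main obstacle; the homotopy comparisons themselves are routine once the bundles are identified.
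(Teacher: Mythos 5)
Your proposal is correct and follows essentially the same route as the paper's proof: the same fiber bundles (orbit maps as finite coverings since $G_{v_0}$ is finite, quotients by the contractible unipotent radical, and the identification of $F$ and $f_1\cdots f_r$ with the quotient maps to $(\Gm)^r$ and $\Gm$ via Theorem \ref{thm:homomorphism}\eqref{enit:1}), all fed into the homotopy long exact sequences. The only deviation is that you compare $\pi_1(L)$ with $\pi_1([L,L])$ via the finite covering $[L,L]\times\gpcenter(L)^0\to L$ rather than the fibration $p_6\colon L\to L/[L,L]\cong(\Gm)^r$ used in the paper; for $n>1$ this is immaterial, and for $n=1$ you would switch to $p_6$ (and use the restricted bundle $p_8\colon K\to\ker(m)\cong(\Gm)^{r-1}$ with fiber $P$) to obtain the exact rows literally as displayed in Figure \ref{fig:pi1}.
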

\begin{figure}[ht]
\caption{The exact diagrams describing the fundamental groups
of the spaces in Proposition \ref{prop:homotopy}.
For a map $\phi$, let $\phi_*$ denote the associated map of
fundamental groups.  Each $i$ is constructed from an inclusion map.
We do not claim commutativity of these diagrams.} 
\label{fig:pi1}
\begin{gather*}
\xymatrix@-1pc{
& & & 0 \ar[d] & 0 \ar[dl] \\
& & & \pi_1(P) \ar[dl]_{i_*} \ar[d]^{i_*} \\
& 0 \ar[r] & \pi_1(K)\ar[dl]^{{p_8}_*} \ar[r]^{i_*} & \pi_1(\Omega)
\ar[d]^{{p_7}_*} \ar[rr]^-{(m\circ p_7)_*} & & \Z
\ar@{=}[d] \ar[r] & 0 \\
0\ar[r] & \Z^{r-1} \ar[dl] \ar[rr] & & \Z^r \ar[rr]^{m_*} \ar[d] & & \Z \ar[r] & 0 \\
0 & & & 0 & & 
}
\\
\xymatrix@-1pc{
& 0 \ar[d] & 0 \ar[d] \\
0 \ar[r] & \pi_1([G,G]) \ar[r]^-{{p_2}_*} \ar[d]_{i_*} & \pi_1(P)
\ar[r]
\ar[d]^{i_*} &
[G,G]\cap G_{v_0} \ar[r] & 0 \\
0 \ar[r] & \pi_1(G) \ar[r]^{{p_1}_*} \ar[d]_{{p_5}_*} & \pi_1(\Omega) \ar[r] \ar[d]^{{p_7}_*} & G_{v_0} \ar[r] & 0 \\
& \Z^r \ar[d] & \Z^r \ar[d] \\ 
& 0 & 0
}
\\
\xymatrix@-1pc{
& 0 \ar[d]& 0\ar[d] & \\
0 \ar[r] & \pi_1([G,G]) \ar[r]^-{i_*} \ar[d]_{{p_4}_*} & \pi_1(G)
\ar[r]^-{{p_5}_*} \ar[d]^{{p_3}_*} & \Z^r
\ar@{=}[d] \ar[r] & 0 \\
0 \ar[r] & \pi_1([L,L]) \ar[r]_-{i_*} \ar[d] & \pi_1(L) \ar[r]_-{{p_6}_*} \ar[d]& \Z^r \ar[r] & 0 \\
& 0 & 0
}
\end{gather*}
\end{figure}
\begin{proof}
If $G_1\subset G_2\subset G_3$ are Lie groups, then
by \cite[\S7.4--7.5]{steenrod}
the map of cosets 
$G_3/G_1\to G_3/G_2$ 
is a fiber bundle with fiber $G_2/G_1$.
Thus, we have the following fiber bundles:
\begin{align*}
p_1:G    &\to G/G_{v_0}\cong \Omega
&
p_2:[G,G]&\to [G,G]/([G,G]\cap G_{v_0})
\\
p_3:G    &\to G/\Rad_u(G)\cong L
&
p_4:[G,G]&\to [G,G]/\Rad_u(G)\cong [L,L]
\\
p_5:G&\to G/[G,G]\cong \Gm^r
&
p_6:L&\to L/[L,L]\cong \Gm^r \\
q:G/G_{v_0}&\to G/[G,G]\cdot G_{v_0}
\end{align*}
The identifications of the codomains
of $p_1$, $p_3$, $p_4$, $p_5$, and $p_6$
are by, respectively,
the orbit map $\alpha_{v_0}$ of $v_0$,
the definition of the Levi decomposition,
Corollary \ref{cor:ofhomomorphism2}\eqref{enit:genbracket},
Theorem \ref{thm:ofhomomorphism}\eqref{enit:unique5},
and
the isomorphism
$$L/[L,L]\cong (G/\Rad_u(G))/([G,G]/\Rad_u(G))\cong G/[G,G]$$
that follows from the Levi decomposition and
Corollary \ref{cor:ofhomomorphism2}\eqref{enit:genbracket}.

Since $G_{v_0}$ and $[G,G]\cap G_{v_0}$ are finite,
$p_1$ and $p_2$ are covering spaces with deck transformation groups
isomorphic to $G_{v_0}$ and $[G,G]\cap G_{v_0}$, respectively
(\cite[Proposition 1.40]{hatcher}).

For $q$, we use the orbit map to identify
$G/G_{v_0}$ with $\Omega$ and
then by Theorem \ref{thm:homomorphism}\eqref{enit:1}
compose with the isomorphism $G/[G,G]\cdot G_{v_0}\to (\Gm)^r$
induced by 
$(\chi_1,\ldots,\chi_r):G\to (\Gm)^r$.
This gives a fiber bundle
$$p_7:\Omega\to (\Gm)^r$$
defined by
$p_7(\rho(g)(v_0))=(\chi_1,\ldots,\chi_r)(g)$
with fiber homeomorphic to $\ker(\chi_1,\ldots,\chi_r)/G_{v_0}=[G,G]\cdot
G_{v_0}/G_{v_0}$.
As the action of $[G,G]$ on $[G,G]\cdot G_{v_0}/G_{v_0}$
is 
smooth and transitive, and the isotropy subgroup at 
$e G_{v_0}$ is $[G,G]\cap G_{v_0}$,
the fiber of $p_7$ is
isomorphic to $[G,G]/([G,G]\cap G_{v_0})$.

Let $m:(\Gm)^r\to \Gm$ and $n:(\C^*)^r\to \C^*$
both be defined by $(a_1,\ldots,a_r)\mapsto a_1\cdots a_r$.
By \eqref{eqn:relinv} we have a commutative diagram
\begin{equation}
\label{eqn:somediagramofmult}
\xymatrix@-0.5pc{
 &
(\Gm)^r \ar[r]^m \ar[d]^{\beta} & \Gm \ar[d]^{\gamma} \\
\Omega \ar[ur]^{p_7} \ar[r]^F \ar@/_1pc/[rr]_{f_1\cdots f_r}
 & (\C^*)^r \ar[r]^{n} & \C^*}
\end{equation}
where 
$\beta(a_1,\ldots,a_r)=(a_1 f_1(v_0),\ldots,a_r f_r(v_0))$
and $\gamma(a)=a (f_1\cdots f_r)(v_0)$ are homeomorphisms.
By \eqref{eqn:somediagramofmult}, $P$ is
homeomorphic to the fiber of $p_7$, that is,
$P\cong [G,G]/([G,G]\cap G_{v_0})$,
and hence $P$ is the codomain of $p_2$. 
Also, $K$ is homeomorphic
to
$(m\circ p_7)^{-1}(1)$;
restricting $p_7$ gives a fiber bundle
$$p_8:K\to \ker(m)\cong (\Gm)^{r-1}$$
with fiber $P$.

By \cite[Proposition 4.48]{hatcher},
a fiber bundle is a Serre fibration, that is,
it possesses the homotopy lifting property for CW complexes,
and these have the usual homotopy long exact sequence of a fibration
(\cite[Theorem 4.41]{hatcher});
apply this sequence 
to all $p_i$ and $m\circ p_7$.
Note that the connected unipotent group $\Rad_u(G)$
is diffeomorphic to some $\C^p$, and hence is contractible
(\cite[\S3.3.6]{ov}).
Also,
$G_{v_0}$ and $[G,G]\cap G_{v_0}$ are finite,
$P\cong [G,G]/[G,G]\cap G_{v_0}$ is connected,
$\pi_1((\Gm)^k)\cong\Z^k$, and $\pi_i((\Gm)^k)\cong 0$ for $i> 1$.
%
%

The long exact sequences show that for $n>1$
there is a diagram, not necessarily commutative,
%
$$
\xymatrix@-1pc{
\pi_n([L,L])\ar[d]_{p_6}
&
\ar[l]_{p_4} \pi_n([G,G]) \ar[r]^-{p_2} \ar[d]_{p_5}
&
\pi_n(P) \ar[r]^{p_8} \ar[d]_{p_7}
&
\pi_n(K) \ar[dl]^{m\circ p_7} 
\\
\pi_n(L)
&
\ar[l]_{p_3} \pi_n(G) \ar[r]^{p_1}
&
\pi_n(\Omega)
&
}$$
where an arrow labeled by $\phi$ represents
an isomorphism
occurring 
in the long exact sequence of the fibration $\phi$,
either $\phi_*$ or $i_*$ for $i$ the inclusion of the
fiber.
The sequences also give
the exact
diagrams in Figure~\ref{fig:pi1}.
\end{proof}

Thus, the homotopy groups may largely be computed from the homotopy
groups of the semisimple part $[L,L]$ of $G$.
For instance, if $G$ is solvable then $[L,L]=\{e\}$ and hence
by
Proposition \ref{prop:homotopy},
$\Omega$, $P$, and $K$ are $K(\pi,1)$ spaces, as shown in \cite{kpi1}.

\subsection{(Non-linear) free divisors}

Let $(D,p)$ be an arbitrary free divisor in $(\C^n,p)$.
May the number of components of
$(D,p)$ be computed from the structure of $M=\Derlog{\C^n,p}{D}$?
One natural guess,
$\dim_\C(M/\mathscr{O}_{\C^n,p}\cdot [M,M])$,
does not work.
For example, if $D$ is a hyperplane in $\C^2$, then the number
computed is $0$; in fact, $M=\mathscr{O}_{\C^n,p}\cdot [M,M]$ whenever $\Derlog{\C^n,p}{D}\nsubseteq
\mathscr{M}_p\cdot \Der_{\C^n,p}$ for $\mathscr{M}_p$ the maximal
ideal.
Other examples give answers too large:
for the plane curve $D=V((a^2-b^3)(a^7-b^{13}))$,
the number computed is $35$.

\bibliographystyle{amsalpha}
\bibliography{refs}

\end{document}